\theoremstyle{plain}
\newtheorem{theorem}{Theorem}[section]
\newtheorem{thm}[theorem]{Theorem}
\newtheorem{lemma}[theorem]{Lemma}
\newtheorem{cor}[theorem]{Corollary}
\newtheorem{prop}[theorem]{Proposition}
\theoremstyle{definition}
\newtheorem{defn}[theorem]{Definition}
\newtheorem{rmk}[theorem]{Remark}
\newtheorem{exam}[theorem]{Example}
\newenvironment{myeq}[1][]
{\stepcounter{theorem}\begin{equation}\tag{\thetheorem}{#1}}
{\end{equation}}
\newcommand{\uZ}{\underline{\mathbb{Z}}}
\newcommand{\uZp}{\underline{\mathbb{Z}/p}}
\newcommand{\uZpp}{\underline{\mathbb{Z}/p^2}}
\newcommand{\uZZ}{\underline{\mathbb{Z}/2}}
\newcommand{\mM}{\underline{M}}
\newcommand{\mN}{\underline{N}}
\newcommand{\uM}{\underline{M}}
\newcommand{\mA}{\underline{A}}
\newcommand{\Z}{\mathbb Z}
\newcommand{\DD}{\mathcal{D}}
\newcommand{\R}{\mathbb R}
\newcommand{\C}{\mathbb C}
\newcommand{\res}{\mathit{res}}
\newcommand{\Map}{\mathit{Map}}
\newcommand{\uH}{\underline{H}}
\newcommand{\tH}{\tilde{H}}
\newcommand{\bs}{\bigstar}
\newcommand{\tHbs}{\tilde{H}^\bigstar_G}
\newcommand{\tHbsb}{\tilde{H}^{\bigstar+1}_G}
\author{Samik Basu, Surojit Ghosh}
\email{samik.basu2@gmail.com; samikbasu@isical.ac.in}
\address{Stat-Math Unit,
Indian Statistical Institute,
B. T. Road, Kolkata-700108, India.}
\email{surojitghosh89@gmail.com; gsurojit@campus.haifa.ac.il}
\address{Department of Mathematics,
University of Haifa,
Haifa-3498838, Israel.} 
\subjclass[2010]{Primary: 55N91, 55P91; Secondary: 57S17, 14M15.}
\keywords{Bredon cohomology, Mackey functor, Tverberg theorem, equivariant cohomology.}
\begin{document}

\title{Bredon cohomology of finite dimensional $C_p$-spaces}
\maketitle

\begin{abstract}
For finite dimensional free $C_p$-spaces, the calculation of the Bredon cohomology ring as an algebra over the cohomology of $S^0$ is used to prove the non-existence of certain $C_p$-maps. These are related to Borsuk-Ulam type theorems, and equivariant maps related to the topological Tverberg conjecture. For certain finite dimensional $C_p$-spaces which are formed out of representations, it is proved that the cohomology is a free module over the cohomology of a point. All the calculations are done for the cohomology with constant coefficients $\Z/p$. 
\end{abstract}

\section{Introduction}
For the cyclic group $C_p$ of prime order $p$, computations of $RO(G)$-graded cohomology $\uH^\bigstar_G(S^0)$ by Stong and Lewis \cite{Lew88} allow us to compute the cohomology of $C_p$-spaces. In this paper, we prove some structural results about the cohomology of $C_p$-spaces with $\uZp$ coefficients. In many ways, $\uZp$ is the analogue of $\Z/p$-coefficients in ordinary cohomology in the non-equivariant case. 

For coefficients in a constant Mackey functor, the integer graded cohomology is the cohomology of the orbit space. If $X$ is a $C_p$-space with free action, we prove that the $RO(G)$-graded cohomology is determined from the cohomology of the orbit space. More precisely, 
$$H^\bigstar_{C_p}(X;\uZp) \cong H^\ast(X/C_p) \otimes_{\xi\in \hat{C_p}- \{1\}} \Z/p[u_\xi^\pm]$$ 
where $\hat{C_p}$ is set of characters of $C_p$ (see Proposition \ref{addstr}). 

We also compute the module structure of $H^\bigstar_{C_p}(X;\uZp)$ over the cohomology of a point (see Proposition \ref{modX}). The module structure allows us to rule out certain equivariant maps between free $C_p$-spaces. More precisely, we deduce Borsuk-Ulam type theorems \cite{IM00} : If $V$ and $V'$ are two fixed point free $C_p$-representations, there are no $C_p$-maps $S(V) \to S(V')$ if $\dim(V) > \dim(V')$ (see Corollary \ref{borsul}). The module structure calculations also allow us to deduce the topological Tverberg conjecture in the prime case, first proved by B\'ar\'any, Shlosman, and Sz\"{u}cs  \cite{B-S-S} (see Theorem \ref{obs}). 

If $X$ is a finite dimensional free $C_p$-space, we use the module structure to obtain a numerical bound $n(X)$ such that for every $i >n(X)$ the elements of $\uH^\bigstar_G(S^0)$ of degree $i$ operate trivially on $\uH^\bigstar_G(X_+)$. This number is related to the Fadell-Husseini index \cite{FH} (see Theorem \ref{F-H}). 

We also prove a freeness result for certain $C_p$-spaces. This kind of theorem writes the cohomology of $X$ as a free module over the cohomology of a point, for certain $G$-complexes formed out of cells in unitary $G$-representations. In ordinary cohomology, with $\Z$-coefficients this is true if $X$ has even dimensional cells, and with $\Z/p$ coefficients this is true for any $X$. For the group $C_p$, Lewis \cite{Lew88} proved a freeness result for certain even dimensional complexes.
There is a condition on the fixed points to deduce that the attaching maps induce the $0$ map on cohomology. For this theorem, only the additive structure of the $RO(C_p)$-graded cohomology of $S^0$ is used. 

Ferland and Lewis \cite{FL04} use the ring structure of the  $RO(C_p)$-graded cohomology of $S^0$ to remove the condition on the fixed points. They show that for any $C_p$-cell complex formed out of representations with even cells, the cohomology is a free module over the cohomology of $S^0$. In this case, the attaching maps do not induce the $0$ map on cohomology but it is proved that the cohomology of the cofibre is still free. This crucially uses the ring structure of $\uH^\bigstar_{C_p}(S^0)$. 

Kronholm \cite{Kr10} has proved that any $C_2$-space obtained inductively by attaching representation cells in increasing dimension (called $Rep(C_2)$-complexes),  the cohomology is a free module over the cohomology of a point when the coefficients are $\uZZ$. This also involves a careful analysis using the ring structure of $\uH^\bigstar_{C_2}(S^0; \uZZ)$.  In this paper, we make the analogous analysis for $C_p$ where $p>2$, and prove that for certain $Rep(C_p)$-complexes, the cohomology is free as a module over the cohomology of a point when the coefficients are $\uZp$. It is important to note here that the direct generalization does not work in the $p$ odd case, due to an example of Clover May (see Remark \ref{Clover-countex}). We prove the theorem in the case where the complex does not have cells in consecutive dimension (see Theorem \ref{frcp}).

\subsection{Organization} In Section 2, we recall some definitions and results from $RO(G)$-graded Bredon cohomology theory. We use these methods to write down the cohomology of free $C_p$-spaces in Section $3$. In Section $4$, we describe some applications such as the proof of the topological Tverberg conjecture in the prime case, and a freeness theorem for representation complexes.

\subsection{Acknowledgements}
The authors would like to thank Clover May for pointing out the conterexample (Remark \ref{Clover-countex}) regarding an error in an earlier version of the paper. The research of the first author was supported by SERB MATRICS grant number 2018/000845.

\section{Preliminaries}

We recall certain basic ideas and techniques in Bredon cohomology. Along the way we fix the notations used throughout the paper. The details and proofs of the stated facts may be found in \cite{May96}. The notation $G$ will be used for the cyclic group $C_p$ of prime order $p$, though most of the facts in this section also holds for a general finite group.  

For a orthogonal representation $V$ of the group $G,$ let $D(V)$ and $S(V)$ denote the unit disc and unit representation sphere in $V,$
 with the action induced from that of $V$. A $G$-CW complex is a $G$-space $X$ with a filtration $\{X^{(n)} \}_{n \geq 0}$, where $X^{(0)}$ is a disjoint union of $G$-orbits, and $X^{(n)}$ is obtained from $X^{(n-1)}$ by attaching cells of the form $G \times_H D^n$ along maps $G \times_H S^{n-1} \to X^{(n-1)}$ where $H \leq G$ and the action of $G$ on $D^n$ and $S^{n-1}$ are trivial. The space $X^{(n)}$ is defined as the $n^{th}$-skelelon of $X.$ The attaching map $G \times_H S^{n-1} \to X^{(n-1)}$ is equivalent to the map $S^{n-1} \to (X^{(n-1)})^H= (X^H)^{(n-1)}$. One may deduce that the category of $G$-CW complexes is equivalent to the functor category from $\mathcal{O}_G$ to the category of CW-complexes, where $\mathcal{O}_G$ is the orbit category of $G$ with objects are the finite $G$-sets and morphisms are the $G$-equivariant maps between $G$-sets.
 
 A coefficient system for the group $G$ is a contravariant functor from $\mathcal{O}_G$ to the category of abelian groups. Since this diagram category from $\mathcal{O}_G$ to the abelian group category is an additive category, therefore, we can talk about chains of coefficient systems. In particular, for a $G$-space we define the Bredon chain, $\underline{C}_*(X; \mathbb{Z}),$ given by assignment $G/H \to C_*(X^H; \mathbb{Z}).$
 
 \begin{defn}
 Let $\underline{M}$ be a coefficient system for a group $G$ and $X$ be a $G$-space. Define the $n^{th}$-Bredon cochains of $X$ with coefficients in $\underline{M}$ as $C^n_G(X; \underline{M}) = Hom_{\mathcal{O}_G}(\underline{C}_n(X, \mathbb{Z}); \underline{M}).$ The cohomology of this complex is defined as $\mathbb{Z}$-graded Bredon cohomology of $X$ with coefficients in $\underline{M}$ and denoted by $H^*_{G}(X; \underline{M}).$
 \end{defn}
 
 Equivariant homotopy and cohomology theories are more naturally graded on $RO(G),$ the Grothendieck group of finite real orthogonal representations of $G.$ To obtain this kind of theory one needs more structure on the coefficient systems. These are called Mackey functors.

\begin{defn}
A Mackey functor consists of a pair $\uM = (\uM_\ast , \uM^\ast )$ of functors from the category of finite $G$-sets to $\mathcal{A} b$, with $\uM_\ast$ covariant and $\uM^\ast$ contravariant. On every object $S$, $\uM^\ast$ and $\uM_\ast$ have the same value which we denote by $\uM(S)$,  and $\uM$ carries disjoint unions to direct sums. The functors are required to satisfy that for every pullback diagram of finite $G$-sets as below 
$$\xymatrix{ P \ar[r]^\delta    \ar[d]^\gamma                             & X \ar[d]^\alpha \\ 
                        Y \ar[r]^\beta                                                     &  Z,}$$
one has $\uM^\ast(\alpha) \circ \uM_\ast(\beta) = \uM_\ast(\delta) \circ \uM^\ast(\gamma).$ 
\end{defn}

Mackey functors are naturally contravariant functors from the Burnside category $Burn_G$ of $G$ to abelian groups. The objects of $Burn_G$ are finite $G$-sets and the morphisms are formed by group completing the monoid of correspondences. The representable functor associated to the $G$-set $G/G$ is called the Burnside ring Mackey functor $\mA$. For a finite $G$-set $S$, $\mA_S$ is the representable functor associated to $S$. 

\begin{exam}For an abelian group $C,$ an immediate example for a Mackey functor is the constant Mackey functor $\underline{C}$ defined by the assignment $\underline{C}(S) = \Map^G(S, C),$ the set of $G$-maps from the $G$-orbit $S$ to $C$ with trivial $G$-action.
\end{exam}

Equivariant cohomology theories are represented by $G$-spectra. The naive $G$-spectra are those in which only the desuspension with respect to trivial $G$-spheres are allowed. Usually, what we mean by $G$-spectra are those in which desuspension with respect to all representation spheres are allowed. In the viewpoint of \cite{LMS86}, naive $G$-spectra are indexed over a trivial $G$-universe and $G$-spectra are indexed over a complete $G$-universe. As we are allowed to take desuspension with respect to representation-spheres, the associated cohomology theories become $RO(G)$-graded. 

We consider the orthogonal $G$-spectra with positive complete model structure to model the equivariant stable homotopy theory, which can be read off from \cite[Appendix A, B]{HHR16}. In particular, $Sp^G$ denotes the symmetric model category of orthogonal $G$-spectra. We denote the homotopy class of maps by $[-, -]^G$ and the equivariant function spectrum $F(-,-)$, a right adjoint to the smash product $\wedge$ in $Sp^G$. 

Every $G$-set $S$ gives a suspension spectrum $\Sigma^\infty_G S_+$ in the category of $G$-spectra. It turns out that the category with finite $G$-sets as objects and homotopy classes of spectra maps as morphisms is naturally isomorphic to the Burnside category. Thus, the homotopy groups of $G$-spectra are naturally Mackey functors. For an equivariant orthogonal spectrum $X,$  we use $\pi_{\bigstar}(X)$ for its $RO(G)$-graded homotopy groups. In particular, for $\alpha = V -W \in RO(G),$ 
$$\pi_\alpha(X) = [S^V, S^W \wedge X]^G.$$

In non-equivariant homotopy theory, for each abelian group $A,$ there is a construction of the Eilenberg-Mac Lane spectra $HA$ satisfying  $$\pi_n(HA) = \begin{cases} A, & \text{if }n =0\\ 
0, & \text{otherwise.}
 \end{cases}$$
 
In the category of orthogonal spectra, one also has a construction of Eilenberg-Mac Lane spectra for each Mackey functor.

\begin{prop}
Let $\uM$ be a $G$-Mackey functor. Then there exist an equivariant Eilenberg-Mac Lane spectrum $H\uM$, unique up to homotopy in $Sp^G.$ 
\end{prop}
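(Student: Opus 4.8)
The plan is to mirror the classical non-equivariant construction of Eilenberg--Mac Lane spectra, replacing abelian groups by Mackey functors and wedges of sphere spectra by wedges of suspension spectra $\Sigma^\infty_G S_+$ of finite $G$-sets. The essential input is the identification, recalled above, of the homotopy category of the suspension spectra $\Sigma^\infty_G S_+$ with the Burnside category $Burn_G$, together with the resulting facts that the homotopy groups $\underline{\pi}_n(X)$ of a $G$-spectrum $X$ are Mackey functors and that $\underline{\pi}_0(\Sigma^\infty_G S_+) \cong \mA_S$. By the Yoneda lemma in the additive category of Mackey functors one has $\Hom(\mA_S, \uN) \cong \uN(S)$, so the representables $\mA_S$ are projective generators, and every homomorphism between finite direct sums of representables is realised, on $\underline{\pi}_0$, by a map of the corresponding wedges of suspension spectra.

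For existence I would first choose a presentation of $\uM$ in the abelian category of Mackey functors,
$$ \bigoplus_{j} \mA_{T_j} \xrightarrow{\ \partial\ } \bigoplus_{i} \mA_{S_i} \longrightarrow \uM \longrightarrow 0, $$
and realise $\partial$ by a map $f\colon \bigvee_j \Sigma^\infty_G (T_j)_+ \to \bigvee_i \Sigma^\infty_G (S_i)_+$ as above. Both wedges are connective with $\underline{\pi}_0$ equal to the displayed sums of representables, so the cofibre $Y_0 = \mathrm{cofib}(f)$ is connective and the long exact sequence of homotopy Mackey functors gives $\underline{\pi}_0(Y_0) \cong \coker(\partial) \cong \uM$. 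The spectrum $Y_0$ may still carry higher homotopy, so I would kill it inductively: given $Y_{n-1}$ with $\underline{\pi}_0 \cong \uM$ and $\underline{\pi}_i = 0$ for $1 \le i \le n-1$, pick a generating set of the Mackey functor $\underline{\pi}_n(Y_{n-1})$, realise it by a map $\bigvee_k \Sigma^n \Sigma^\infty_G (U_k)_+ \to Y_{n-1}$, and let $Y_n$ be its cofibre. Attaching cells in dimension $n+1$ leaves $\underline{\pi}_i$ unchanged for $i \le n-1$, makes $\underline{\pi}_n$ vanish by surjectivity, and alters homotopy only in degrees $\ge n+1$. Setting $H\uM = \operatorname{colim}_n Y_n$ and using that homotopy Mackey functors commute with this sequential colimit yields a connective spectrum with $\underline{\pi}_0(H\uM) \cong \uM$ and $\underline{\pi}_i(H\uM) = 0$ for $i \ne 0$.

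For uniqueness, suppose $E$ and $E'$ both have homotopy concentrated in degree $0$ with value $\uM$. Using the cell structure on $E$ from the construction above, I would build a map $g\colon E \to E'$ inducing the identity on $\underline{\pi}_0$ by the standard obstruction argument: the obstructions to extending over successive skeleta, and to uniqueness of the extension, lie in Bredon cohomology groups of the form $H^{*}_G(-;\underline{\pi}_*(E'))$ of positive total degree, and these vanish because $E'$ has no homotopy outside degree $0$. Since $g$ is then an isomorphism on every homotopy Mackey functor $\underline{\pi}_n$, the equivariant Whitehead theorem shows $g$ is an equivalence. I expect the main technical point to be the realisation and bookkeeping in the existence step --- verifying that a purely algebraic homomorphism of Mackey functors between sums of representables lifts to a map of suspension spectra inducing exactly that map on $\underline{\pi}_0$, and that the inductive cell attachments affect the homotopy Mackey functors only in the stated range --- since everything downstream (the long exact sequences, the colimit, and the obstruction theory) is then formal. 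Equivalently, the statement amounts to the assertion that the Postnikov $t$-structure on $Sp^G$ has heart equivalent to the category of Mackey functors, with $H\uM$ the object of the heart corresponding to $\uM$.
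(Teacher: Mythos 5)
Your proposal is correct, but note that the paper does not prove this proposition at all: its ``proof'' is a citation to \cite[Theorem 5.3]{GM95a}. What you have written is essentially the standard argument that lives inside that reference, so the comparison here is between an outsourced proof and a reconstructed one. Your reconstruction hits the right key points: the representables $\mA_S$ are projective generators of Mackey functors with $\Hom(\mA_S,\uN)\cong \uN(S)$ by the additive Yoneda lemma, and by self-duality of finite $G$-sets one has $[\Sigma^\infty_G T_+, X]^G \cong \underline{\pi}_0(X)(T) \cong \Hom(\mA_T,\underline{\pi}_0(X))$, which is exactly what makes the realization of the algebraic presentation and of the generators of $\underline{\pi}_n(Y_{n-1})$ possible; the long exact sequences then give the inductive killing of homotopy, and connectivity plus the equivariant Whitehead theorem (iso on all $\underline{\pi}_n$, i.e.\ on all $\pi_n^H$, is the definition of weak equivalence in $Sp^G$) gives uniqueness. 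Two small points deserve sharper wording. First, in the uniqueness step the obstructions are most cleanly located not in Bredon cohomology of skeleta but in the groups $\prod_k \underline{\pi}_n(E')(U_k)$ of maps out of the wedges $\bigvee_k \Sigma^n \Sigma^\infty_G (U_k)_+$ used as cells, which vanish for $n\neq 0$ since $E'$ is Eilenberg--Mac Lane. Second, passing from the compatible family of maps $Y_n \to E'$ to a map $\varinjlim_n Y_n \to E'$ has a potential $\lim^1$ obstruction; the Milnor exact sequence disposes of it (surjectivity onto the inverse limit is all you need), but it should be mentioned. Your closing remark that the statement is equivalent to the identification of the heart of the Postnikov $t$-structure on $Sp^G$ with Mackey functors is accurate and is the modern formulation of the same theorem.
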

\begin{proof}
See \cite[Theorem 5.3]{GM95a}.
\end{proof} 

This implies a definition of $RO(G)$-graded homotopy groups of Equivariant Eilenberg-Mac Lane spectra which, unlike the integer graded ones, are non-trivial in infinitely many degrees. In fact, equivariant Eilenberg-Mac Lane spectra must arise from Mackey functors.  This is a theorem of Lewis, May, and McClure which we refer from Chapter XIII of \cite{May96}.  Therefore, we may argue that the integer-graded cohomology associated to coefficient systems extends to $RO(G)$-graded cohomology theories if and only if the coefficient system has an underlying Mackey functor structure.

\begin{defn}
 An $RO(G)$-graded cohomology theory consists of functors $E^\alpha$ for $\alpha\in RO(G)$, from reduced equivariant CW complexes to abelian groups which satisfy the usual axioms - homotopy invariance, excision, long exact sequence and the wedge axiom. 
\end{defn}
It is interesting to note that the suspension isomorphism for $RO(G)$-graded cohomology theories takes the form 
$E^{\alpha}(X) \cong E^{\alpha + V}(S^V \wedge X)$ for every based $G$-space $X$ and representation $V$.

We recall that there are change of groups functors on equivariant spectra. The restriction functor from $G$-spectra to $H$-spectra has a left adjoint given by smashing with $G/H_+$. This also induces an isomorphism on cohomology with Mackey functor coefficients 
$$\tilde{H}^\alpha_G(G/H_+\wedge X ; \uM)\cong \tilde{H}^\alpha_H(X; \res_H(\uM))$$

The $RO(G)$-graded theories may also be assumed to be Mackey functor-valued as in the definition below.   
\begin{defn}
Let $X$ be a pointed $G$-space, $\uM$ be any Mackey functor, $\alpha \in RO(G)$. Then the Mackey functor valued cohomology $\uH^{\alpha}_{G}(X;\uM)$ is defined as 
$$\uH^{\alpha}_{G}(X;\uM)(G/K) = \tilde{H}^{\alpha}_{G}({G/K}_+ \wedge X;\uM).$$
The restriction and transfer maps are induced by the appropriate maps of $G$-spectra. 
\end{defn}

The Mackey functor valued cohomology is always the reduced version, so we may only evaluate it on a pointed $G$-space. This means that we have $G$-invariant basepoint. On the other hand, the Bredon cohomology groups are defined for unpointed $G$-spaces also, so when we write the reduced version we put $\sim$ as a superscript. This explains our notation in the rest of the document where we use the notation $\tilde{H}^\bigstar_G(X_+)$. 

The next natural question is : {\it What is the structure on a Mackey functor which induces a ring structure on the cohomology of spaces?} There is a box product $\Box$ on the category of Mackey functors. For two Mackey functors $\mM, \mN$, this is obtained by taking the left Kan extension along 
$$\xymatrix{ Burn_G \times Burn_G \ar[r] \ar[d] & Ab \\ 
 Burn_G \ar@{-->}[ru]}$$
The right arrow in the top row is given by $(S,T)\mapsto \mM(S)\otimes \mN(T)$. The left vertical arrow is given by $(S,T) \mapsto S\times T$. The Mackey functors inducing ring valued cohomology theories are monoids under the box product $\Box$. The Burnside ring Mackey functor $\mA$, and the constant Mackey functors $\uZ$, $\uZp$ are monoids under $\Box$. Therefore $\tilde{H}^\bigstar_G(S^0;\mM)$ has a graded ring structure for $\mM= \mA, \uZ$ or $\uZp$. 

 \section{Cohomology of free $C_p$-spaces} \label{coh}
In this section, we compute the Bredon cohomology of free $C_p$-spaces. There are two ingredients in this, first the ring structure on the cohomology and second the module structure over the cohomology of a point. The former is computed in Section \ref{freess}, and the latter in Section \ref{module}. Along the way we also describe the method to compute $\tilde{H}^\bigstar_G(S^0)$ using the Tate square, to recalculate the computations by Stong and Lewis \cite{Lew88}. 

\subsection{The cohomology ring structure} \label{freess}
Let $X$ be a free $G$-space. We prove that the cohomology ring of $X$ is obtained in a neat way from the ordinary cohomology of $X/G$. For this purpose, we  set up a spectral sequence to compute the Bredon cohomology of a free $G$-space $X$ arising from a CW structure on $X$, which we then proceed to compute completely. This is the homotopy fixed point spectral sequence in the case $X=EG$ in \cite[Proposition 2.8]{HM17}. 

We start with a $G$-CW structure on $X$ as $X=\cup_s X^{(s)}$. Since $X$ is free the cells are of the type $G/e \times \DD^k$. We also note that $X/G$ has a CW complex structure with associated filtration $X^{(s)}/G$. Thus, we have 
$$X^{(s)}/ X^{(s-1)}\cong \bigvee_{e \in I(s)}G_+ \wedge S^s$$
For $\alpha \in RO(G)$, define 
$$E_1^{s,t}(\alpha) = \pi^G_{t-s}(F(X^{(s)}/ X^{(s-1)}, S^{-\alpha}\wedge H\uZp)).$$  
We make the following identifications
  \begin{align*}
  {\pi}^G_{t-s}(F(X^{(s)}/ X^{(s-1)}, S^{-\alpha}\wedge H\uZp)) & \cong {[S^{t-s}, F(\bigvee_{e \in I(s)}G_+ \wedge S^{s}, S^{-\alpha}\wedge H\uZp)]}^G \\
                                                                                       & \cong \bigoplus_{e \in I(s)}{[S^{t-s} \wedge G_+ \wedge S^s, S^{-\alpha}\wedge H \uZp]}^G \\ 
                                                                                                     & \cong \bigoplus_{e \in I(s)}  [S^t, S^{-\dim(\alpha)}\wedge H\Z/p] \\ 
                                                                                      & \cong \bigoplus_{e \in I(s)} {\pi}_t(S^{-\dim(\alpha)}\wedge H\Z/p) \\ 
                                                                                       & \cong C^s(X/G;{\pi}_t(S^{-\dim(\alpha)}\wedge H\Z/p )).
\end{align*}
In the case $p$ odd, all the representation spheres are orientable so that the action of $G$ on ${\pi}_t(S^{-\dim(\alpha)}\wedge H\Z/p)$ is trivial. In the case $p=2$, also the action of $G$ on ${\pi}_t(S^{-\dim(\alpha)}\wedge H\Z/2) \cong \Z/2$ is trivial. It follows that the boundary $d_1$ matches with the cellular coboundary. Therefore, we have proved the following result. 
\begin{prop} \label{HFPSS} There is a spectral sequence 
$$E^{s,t}_2(\alpha) = H^s(X/G; {\pi}_t(S^{-\dim(\alpha)}\wedge H\Z/p )) \Rightarrow \pi_{\alpha+t-s}^G(F(X_+, H\uZp)) $$
with boundary $d_r : E_r^{s,t}(\alpha) \to E_r^{s+r,t-r+1}(\alpha).$ The spectral sequences assemble together for various $\alpha$ into a multiplicative $RO(G)$-graded spectral sequence 
$$E^{s,\alpha}_2 = H^s(X/G; \pi_0 (S^{-\dim(\alpha)}\wedge H\Z/p)) \Rightarrow \pi_{\alpha -s}^G(F(X_+,H\uZp))$$ 
where $s\in \Z$ and $\alpha \in RO(G)$. 
\end{prop}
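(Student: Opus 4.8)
The plan is to produce the spectral sequence as the one associated in the standard way to the skeletal filtration $X^{(0)}\subset X^{(1)}\subset\cdots$, after applying the contravariant functor $\pi^G_\ast(F(-,S^{-\alpha}\wedge H\uZp))$. First I would feed the cofiber sequences $X^{(s-1)}_+\to X^{(s)}_+\to X^{(s)}/X^{(s-1)}$ into this functor; the resulting long exact sequences assemble into an exact couple whose $E_1$-term is the group $E_1^{s,t}(\alpha)$ defined just before the statement, and whose derived couple gives a spectral sequence with differentials $d_r$ of the asserted bidegree $(r,1-r)$. Since the identification $E_1^{s,t}(\alpha)\cong C^s(X/G;\pi_t(S^{-\dim(\alpha)}\wedge H\Z/p))$ and the matching of $d_1$ with the cellular coboundary have already been established above, the $E_2$-term is immediately $H^s(X/G;\pi_t(S^{-\dim(\alpha)}\wedge H\Z/p))$, as claimed.

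Next I would settle convergence. Because $S^{-\dim(\alpha)}\wedge H\Z/p$ is a suspension of the nonequivariant Eilenberg--Mac Lane spectrum, its homotopy is concentrated in a single degree, so for each fixed $\alpha$ the $E_2$-page lives on the single line $t=-\dim(\alpha)$; every higher differential changes $t$ by $1-r\neq 0$ and hence vanishes, and the spectral sequence collapses at $E_2$. To identify the abutment I would write $X_+$ as the homotopy colimit of the $X^{(s)}_+$, so that $F(X_+,H\uZp)$ is the corresponding homotopy limit, and control the associated $\lim^1$-term. This is harmless when $X$ is finite dimensional (the filtration is finite), and in the finite-type situation that motivates the statement (e.g.\ $X=EG$, where $X/G=BG$) the towers $\{H^s(X^{(s)}/G;\Z/p)\}_s$ are Mittag--Leffler, so $\lim^1$ vanishes and the spectral sequence converges strongly to $\pi^G_{\alpha+t-s}(F(X_+,H\uZp))$.

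For the multiplicative structure and the assembly over $\alpha$ I would use that $X_+$ is a coalgebra via the diagonal $X\to X\times X$ and that $H\uZp$ is a commutative monoid under $\Box$, so that $F(X_+,H\uZp)$ is a ring spectrum whose $RO(G)$-graded homotopy is the cohomology ring. The skeletal filtration is multiplicative, since the diagonal carries $X^{(s)}$ into $\bigcup_{i+j=s}X^{(i)}\times X^{(j)}$; pairing the associated filtered spectra therefore makes the spectral sequence multiplicative, with each $d_r$ a derivation. Finally, the suspension isomorphism $\pi_t(S^{-\dim(\alpha)}\wedge H\Z/p)\cong\pi_0(S^{-\dim(\alpha+t)}\wedge H\Z/p)$ lets me absorb the internal grading $t$ into $\alpha$ (replacing $\alpha$ by $\alpha+t$, i.e.\ adding $t$ trivial summands), and collecting the individually collapsing spectral sequences over all $\alpha\in RO(G)$ yields the single multiplicative $RO(G)$-graded spectral sequence with $E_2^{s,\alpha}=H^s(X/G;\pi_0(S^{-\dim(\alpha)}\wedge H\Z/p))$ abutting to $\pi^G_{\alpha-s}(F(X_+,H\uZp))$.

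I expect the existence of the spectral sequence and the $E_2$-identification to be essentially automatic given the preceding computation, so the real work lies in the last two steps. The most delicate point is the organizational one: turning the family of $\alpha$-indexed, individually collapsing spectral sequences into one genuinely $RO(G)$-graded multiplicative spectral sequence, which requires checking that the products coming from the diagonal are compatible with the suspension isomorphisms used to absorb $t$. A secondary technical point is the $\lim^1$/conditional-convergence argument in the infinite-dimensional case such as $X=EG$, where convergence relies on a Mittag--Leffler property of the cohomology towers rather than on finiteness of the filtration.
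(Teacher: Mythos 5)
Your proposal is correct and is essentially the paper's own argument: the spectral sequence comes from the skeletal filtration via the standard exact couple, the $E_1$-term and $d_1$ are identified with the cellular cochain complex of $X/G$ (using that the cells are free and that the $G$-action on $\pi_t(S^{-\dim(\alpha)}\wedge H\Z/p)$ is trivial), and collapse at $E_2$ holds because the coefficients sit in the single row $t=-\dim(\alpha)$. The only divergence is organizational: the two points you argue by hand --- strong convergence (via $\lim^1$/Mittag--Leffler) and multiplicativity together with the $RO(G)$-graded assembly (via the diagonal and pairings of filtered spectra) --- are precisely the points the paper disposes of by citation, namely the homotopy fixed point spectral sequence of \cite[Proposition 2.8]{HM17} and \cite[Theorem 6.1]{Dug03} applied to $X$ in place of $EG$, so your sketches amount to outlines of the proofs of the cited results.
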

%
%
 
%
%

The multiplicative structure of the spectral sequence follows verbatim from \cite[Theorem 6.1]{Dug03} applied to $X$ instead of $EG$. We readily deduce that for every $\alpha$, the spectral sequence $E_2^{s,t}(\alpha)$ is concentrated at $t=-\dim(\alpha)$. Therefore, the spectral sequences degenerate at the second page. Also, the multiplicative structure on the spectral sequence in Proposition \ref{HFPSS} implies that everything is a product of the form $H^s(X/G) \otimes \pi_0 (S^{\dim(\alpha)} \wedge H\Z/p)$. Now $\pi_0 (S^{\dim(\alpha)} \wedge H\Z/p)$ fits together as a ring $\bigotimes\limits_{\xi \in \hat{G}} \Z[u_\xi^\pm]$ with $|u_\xi|=\dim(\xi)- \xi.$  Thus we obtain 
\begin{prop} \label{addstr} 
For a free $G$-space $X$, 
$$\tilde{H}^\bigstar_{G}(X_+;\uZp) \cong H^{\ast}(X/G;\Z/p)\otimes \bigotimes_{\xi \in \hat{G} \setminus \{1 \}} \Z/p [u_\xi^\pm].$$
\end{prop}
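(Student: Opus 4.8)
The plan is to extract the isomorphism directly from the multiplicative $RO(G)$-graded spectral sequence of Proposition \ref{HFPSS}. First I would compute the $E_2$-page explicitly. For a fixed $\alpha\in RO(G)$ the coefficient group is $\pi_t(S^{-\dim(\alpha)}\wedge H\Z/p)\cong \pi_{t+\dim(\alpha)}(H\Z/p)$, the nonequivariant homotopy of a (de)suspended mod-$p$ Eilenberg--Mac Lane spectrum; hence it equals $\Z/p$ for $t=-\dim(\alpha)$ and vanishes otherwise. Consequently $E_2^{s,t}(\alpha)$ is concentrated in the single row $t=-\dim(\alpha)$, where it is $H^s(X/G;\Z/p)$.

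Next I would deduce collapse and the absence of additive extensions. The differential $d_r$ changes $t$ by $1-r\neq 0$ for $r\ge 2$, so on a page supported in one row every $d_r$ with $r\ge 2$ vanishes and $E_2=E_\infty$. Since, for each $\alpha$, exactly one bidegree contributes to any given total degree of the abutment, the filtration on the abutment is trivial and no extension problem arises. Identifying the abutment via the function-spectrum adjunction, $\pi_{\alpha-s}^G(F(X_+,H\uZp))\cong \tilde{H}^{\,s-\alpha}_G(X_+;\uZp)$, the collapse already gives the additive statement: for each $\beta\in RO(G)$ one has $\tilde{H}^{\beta}_G(X_+;\uZp)\cong H^{\dim(\beta)}(X/G;\Z/p)$, the single group $H^{\dim(\beta)}(X/G)$ being distributed over all the reduced gradings $\beta-\dim(\beta)$.

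Finally I would promote this to the ring isomorphism using the multiplicativity of the spectral sequence, which holds verbatim by \cite[Theorem 6.1]{Dug03}. The line $s=0$ contributes, for each $\alpha$, a copy of $\Z/p$ in reduced degree $\dim(\alpha)-\alpha$; these classes are permanent cycles (they already live on $E_2$) and assemble into the Laurent subring generated by the orientation classes $u_\xi$ with $|u_\xi|=\dim(\xi)-\xi$, one for each nontrivial character (the trivial character yields reduced degree $0$, i.e.\ a unit multiple of $1$, and so is discarded). Each $u_\xi$ is invertible, and multiplication by $u_\xi$ realizes exactly the isomorphism $\tilde{H}^{\beta}_G\cong \tilde{H}^{\beta+|u_\xi|}_G$ from the additive step; the associated graded ring is therefore $H^\ast(X/G;\Z/p)\otimes\bigotimes_{\xi\in\hat{G}\setminus\{1\}}\Z/p[u_\xi^\pm]$, which equals the abutment ring since there are no extensions.

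I expect the main obstacle to lie in this last step: verifying that the multiplication on the abutment itself (not merely on its associated graded) is the honest tensor product, and pinning down the precise set of invertible generators $u_\xi$ together with their $RO(G)$-degrees. The collapse and the additive identification are formal once the $E_2$-page is in hand; the multiplicative bookkeeping --- identifying the $u_\xi$, checking they are units, and ruling out hidden multiplicative extensions (which is exactly what the single-filtration-per-degree phenomenon guarantees) --- is where the real care is required.
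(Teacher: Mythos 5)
Your proposal is correct and takes essentially the same route as the paper: both compute the $E_2$-page of the spectral sequence of Proposition \ref{HFPSS}, note that for each $\alpha$ it is concentrated in the single row $t=-\dim(\alpha)$ so the sequence degenerates with no extension problems, and then use the multiplicative structure (via Dugger's theorem) to assemble the invertible classes $u_\xi$ of degree $\dim(\xi)-\xi$ into the Laurent tensor factor $\bigotimes_{\xi\in\hat{G}\setminus\{1\}}\Z/p[u_\xi^{\pm}]$. Your extra care about the vanishing of higher differentials, the triviality of the filtration, and hidden multiplicative extensions simply makes explicit what the paper leaves implicit.
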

We observe here that the classes $u_\xi$ here are defined via the isomorphism $\pi_0(S^{\dim(\xi - 2)}\wedge H\Z/p)\cong \Z/p$ and hence, up to a unit in $\Z/p$. In the next section, we will make this choice unambiguous.  

\subsection{Cohomology of $S^0$} 
In this section, we compute the cohomology of $S^0$ with $\uZp$ coefficients. This was first computed by Stong and Lewis \cite{Lew88}. Here, we use the Tate square 
\begin{myeq}\label{tate}
\xymatrix{EG_+ \wedge H \uZp\ar[d]^{\simeq} \ar[r]& H \uZp \ar[r]\ar[d]^{q}    & \widetilde{EG}\wedge H \uZp \ar[d] \\ 
        EG_+ \wedge F(EG_+, H\uZp) \ar[r]              & F(EG_+, H\uZp) \ar[r]  & \widetilde{EG}\wedge F(EG_+, H\uZp)}.
\end{myeq} 

We first recall the definition of the classes $a_\xi$ and $u_\xi$ in $\tilde{H}^\bigstar_G(S^0; \uZp)$ from \cite{HHR16}.
\begin{defn} \label{cohgen}
 (1) For an actual representation $V$ with $V^G =0,$ let $a_V \in \pi_{-V}^G(S^0)$ be the maps $S^0 \to S^V$ which embeds $S^0$ to $S^V$ to $0$ and $\infty$ in $S^V.$ We will also use $a_V$ for the dual of its Hurewicz image in $\tilde{H}^V_G(S^0; \uZp).$

(2) For an actual orientable representation $W$ of dimension $n,$ let $u_W$ be a generator of $\underline{H}_n (S^W; \uZp)(G/G)$ which restricts to the choice of orientation in  $ \underline{H}_n (S^W; \uZp)(G/e)\cong \tilde{H}_n(S^n; \Z/p).$
\end{defn}

In homotopy grading, $u_W \in \underline{\pi}_{n -W}(H \uZp)(G/G).$ We also denote its dual by $u_W$ in $\tilde{H}^{W-n}_G(S^0; \uZp).$

The cases $p$ odd and $p=2$ are slightly different. We start with the $p$ odd case. Note that in this case there are $\frac{p-1}{2}$ non-trivial real irreducible $C_p$-representations which may be listed as $\{\xi^i| 1\leq i \leq \frac{p-1}{2}\}$ with $\xi$ the usual multiplication by the $p^{th}$ roots of $1$. From now on, we fix the notation $\xi$ for this representation. In the calculation of $\tilde{H}^\bigstar_G(S^0;\uZp) \cong \pi_{-\bigstar} H\uZp$, one notes that it suffices to restrict $\bigstar$ to the values $m+n\xi$ with $m,n \in \Z$. For, one may observe from \cite[Appendix B]{Fer99} that $S^\alpha \wedge H\uZp \simeq H\uZp$ whenever both the dimension of $\alpha$ and the dimension of $\alpha^G$ are $0$. It follows that $S^{\xi - \xi^i} \wedge H\uZp \simeq H\uZp$, so that there are invertible classes in the graded commutative ring $\pi_\bs H\uZp$ in degrees $\xi^i - \xi$, which makes 
\begin{myeq}\label{repfr}
\tilde{H}^{m+\sum_i n_i \xi^i}_G(S^0;\uZp) \cong \tilde{H}^{m+(\sum n_i)\xi}_G(S^0;\uZp).
\end{myeq}
From now onwards $\bigstar$ is restricted to degrees  $m+n\xi$ for $m,n \in \Z$, and $\ast $ is restricted to integer gradings. Now Definition \ref{cohgen} gives two classes $u_\xi \in  \tilde{H}^{\xi-2}_G(S^0; \uZp)$ and  $a_\xi \in \tilde{H}^\xi_{G}(S^0; \uZp)$. 

From Proposition \ref{addstr}, $\tilde{H}^\bigstar_G(EG_+;\uZp) \cong H^\ast(BG; \Z/p) \otimes \Z/p[u_\xi^\pm]$. We now check that under $q$ the class $u_\xi$ maps to a generator of $\tH^{\xi-2}_G(EG_+;\uZp)$. This is true for any free $G$-space whose cohomology is described in Proposition \ref{addstr}.  From the construction of the spectral sequence in Proposition \ref{HFPSS}, for $X = G/e$ we have $\tilde{H}^\bigstar_G(G/e_+; \uZp) \cong \Z/p[u_\xi^\pm].$ Note that the map 
$$\uH^{\xi-2}_G(S^0; \uZp)(\cong \uZp) \to \uH^{\xi-2}_G(G/e_+; \uZp)(\cong \mA_{G/e}\otimes \Z/p)$$
 induced by the  projection $G/e_+ \to S^0$ is an isomorphism at $G/G.$ Therefore, the class $u_\xi \in \tilde{H}^{\xi-2}_G(S^0; \uZp)$ maps to a generator of $u_\xi \in \tilde{H}^{\xi-2}_G(X_+; \uZp)$ for any $G$-space $X.$ This may be treated as the class $u_\xi$ in Proposition \ref{addstr}. 

We next calculate the image of $a_\xi \in \tilde{H}^\xi_G(S^0; \uZp)$ in $ \tilde{H}^\bigstar_{G}({EC_p}_+; \uZp)$, that is the image under $q$ in the diagram \eqref{tate}. We have
$$\tilde{H}^\bigstar_{G}({EC_p}_+; \uZp) \cong \Z/p[x, y, u_\xi^\pm]/(x^2) \text{ where } |x| =1, |y|=2, |u_\xi| = \xi-2.$$
Observe, $\tilde{H}^\xi_{G}({EC_p}_+; \uZp) \cong \Z/p\{u_\xi y \}.$ Also observe that the map 
$$\tilde{H}^\xi_{G}({EC_p}_+; \uZp) \to \tilde{H}^\xi_{G}(S(2\xi)_+; \uZp)$$ 
is an isomorphism (for $H \leq C_p$, the pair $({EC_p}_+^H, S(2\xi)_+^H)$ is $3$-connected so, $\tilde{H}^\xi_G({EC_p}_+, S(2\xi)_+; \uZp) =0$).
The cofibre sequence 
$$S(2\xi)_+ \to S^0 \to S^{2\xi}$$
induces the exact sequence 
$$\tilde{H}^{-\xi}_G(S^0; \uZp) \to \tilde{H}^{\xi}_G(S^0; \uZp)\to \tilde{H}^{\xi}_G(S(2\xi)_+; \uZp) \to \tilde{H}^{1-\xi}_G(S^0; \uZp)$$
One readily computes using the cofibre sequence
$$S(\xi)_+ \to S^0 \to S^{\xi}$$
that $\tilde{H}^{-\xi}_G(S^0; \uZp)=0$ and $\tilde{H}^{1-\xi}_G(S^0; \uZp)=0$. Therefore, we have 
$$\tilde{H}^{\xi}_G(S^0; \uZp) \cong \tilde{H}^\xi_{G}(S(2\xi)_+; \uZp).$$
 Thus, we may fix the generator $y$ such that $a_\xi \mapsto u_\xi y.$ Note that as $\beta(x) =y,$ this also fixes the generator $x$ (here $\beta$ refers to the Bockstein in ordinary cohomology). Therefore, we may write 
$$\pi_\bigstar F({EC_p}_+, H\uZp) \cong \Z/p[x, a_\xi, u_\xi^\pm]/(x^2).$$
Now we work around the Tate square \eqref{tate}. From \cite[Proposition 1.1]{GM95}, it follows that the left vertical arrow is a $G$-equivalence. The maps $S^0 \to \widetilde{EG}$ induce a localization with respect to $a_\xi,$ as $\widetilde{EG} \simeq \varinjlim\limits_{n} S^{n \xi}.$ This implies 
$$\pi_\bigstar \widetilde{EG}\wedge F({EG}_+, H\uZp) \cong \Z/p[x, a_\xi^\pm, u_\xi^\pm]/(x^2).$$ 
Now the map 
$$\Z/p[x, a_\xi, u_\xi^\pm]/(x^2) \to \Z/p[x, a_\xi^\pm, u_\xi^\pm]/(x^2)$$
 is injective, so, 
$$\pi_\bigstar {EG}_+\wedge F({EG}_+, H\uZp) \cong \bigoplus \Z/p \{\Sigma^{-1} \frac{x^\epsilon u_\xi^j}{a_\xi^k} \}\text{ for } \epsilon \in \{ 0,1\}, j \in \Z, k \geq 1.$$ 
It follows that 
$$\pi_\bigstar {EG}_+ \wedge H\uZp \cong \bigoplus \Z/p \{\Sigma^{-1} \frac{x^\epsilon u_\xi^j}{a_\xi^k} \}\text{ for } \epsilon \in \{ 0,1\}, j \in \Z, k \geq 1.$$ 
We know that $\pi_\bigstar \widetilde{EG}\wedge H \uZp$ is $a_\xi$-periodic, so it suffices to compute $\pi_n \widetilde{EG}\wedge H \uZp$ for $n \in \Z$ and then 
$$\pi_\bigstar \widetilde{EG}\wedge H \uZp \cong \pi_\ast (\widetilde{EG}\wedge H \uZp) [a_\xi^\pm].$$ 
Now, 
$$\pi_0({EG}_+ \wedge H \uZp) \to \pi_0 (H\uZp)$$ 
is the transfer map of $\uZp$ and hence $0.$ Therefore, we get 
$$\pi_\ast \widetilde{EG}\wedge H \uZp \cong \Z/p\{1 \} \oplus \Z/p \{ \frac{x^\epsilon u_\xi^k}{a_\xi^k}\}\text{ for } \epsilon \in \{0,1 \}, k \geq 1.$$ 
We define the class $\kappa_\xi$ as $u_\xi x$. Therefore, 
$$\pi_\bigstar \widetilde{EG}\wedge H \uZp \cong \Z/p[a_\xi^\pm , u_\xi, \kappa_\xi]/ (\kappa_\xi^2).$$
 We now compute the kernel of the boundary $\pi_\bigstar \widetilde{EG}\wedge H \uZp \to \pi_{\bigstar-1} {EG}_+\wedge H \uZp$ to be 
$$\Z/p[a_\xi, u_\xi, \kappa_\xi]/(\kappa_\xi^2)$$ 
and the cokernel to be 
$$ \Z/p \{\Sigma^{-1} u_\xi^{-j} a_\xi^{-k} \} \oplus  \Z/p \{\Sigma^{-1} \kappa_\xi u_\xi^{-j} a_\xi^{-k} \}\text{ for } j,k >0.$$
Therefore, we obtain 
\begin{prop} \label{cohptodd}
For $p$ odd,
$$\pi_\bigstar (H\uZp)\cong \Z/p[a_\xi, u_\xi, \kappa_\xi]/(\kappa_\xi^2)\oplus \Z/p \{\Sigma^{-1} \frac{1}{u_\xi^{j} a_\xi^{k}} \} \oplus \Z/p\{\Sigma^{-1}\frac{\kappa_\xi}{u_\xi^{j}a_\xi^k}\}~ j,k >0.$$
\end{prop}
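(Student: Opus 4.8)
The plan is to read off $\pi_\bigstar(H\uZp)$ from the top cofibre sequence of the Tate square \eqref{tate},
$$EG_+ \wedge H\uZp \to H\uZp \to \widetilde{EG}\wedge H\uZp,$$
by first computing the two outer terms and then running the associated long exact sequence of $RO(G)$-graded homotopy groups. This reduces everything to the connecting homomorphism $\partial \colon \pi_\bigstar(\widetilde{EG}\wedge H\uZp) \to \pi_{\bigstar-1}(EG_+\wedge H\uZp)$: once $\ker\partial$ and $\coker\partial$ are known, each $\pi_\bigstar(H\uZp)$ fits in a short exact sequence $0 \to \coker(\partial) \to \pi_\bigstar(H\uZp) \to \ker(\partial) \to 0$, which splits because all groups in sight are $\F_p$-vector spaces. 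So the final answer will be $\pi_\bigstar(H\uZp) \cong \ker(\partial)\oplus \coker(\partial)$.

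For the free end I would identify $\pi_\bigstar F(EG_+, H\uZp)$ with $\tilde{H}^\bigstar_G(EG_+;\uZp)$, which Proposition \ref{addstr} gives as $\Z/p[x,y,u_\xi^\pm]/(x^2)$; the normalizations fixed above (with $a_\xi \mapsto u_\xi y$ and $\beta(x)=y$) rewrite this as $\Z/p[x,a_\xi,u_\xi^\pm]/(x^2)$. Since the left vertical arrow of \eqref{tate} is a $G$-equivalence by \cite[Proposition 1.1]{GM95}, one has $\pi_\bigstar(EG_+\wedge H\uZp)\cong \pi_\bigstar(EG_+\wedge F(EG_+,H\uZp))$, and I would extract this from the bottom cofibre sequence: the collapse $S^0\to\widetilde{EG}\simeq \varinjlim_n S^{n\xi}$ inverts $a_\xi$, so $\pi_\bigstar(\widetilde{EG}\wedge F(EG_+,H\uZp))\cong \Z/p[x,a_\xi^\pm,u_\xi^\pm]/(x^2)$, and because the localization map out of $\Z/p[x,a_\xi,u_\xi^\pm]/(x^2)$ is injective, the free summand is exactly the negative-$a_\xi$-power part $\bigoplus \Z/p\{\Sigma^{-1} x^\epsilon u_\xi^j a_\xi^{-k}\}$ with $\epsilon\in\{0,1\}$, $j\in\Z$, $k\geq 1$.

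For $\widetilde{EG}\wedge H\uZp$ I would exploit $a_\xi$-periodicity to reduce to the integer-graded groups $\pi_\ast(\widetilde{EG}\wedge H\uZp)$, where the only extra input is that $\pi_0(EG_+\wedge H\uZp)\to \pi_0(H\uZp)$ is the transfer of $\uZp$ and hence zero. This forces $\pi_\ast(\widetilde{EG}\wedge H\uZp)\cong \Z/p\{1\}\oplus \Z/p\{x^\epsilon u_\xi^k a_\xi^{-k}\}$, and inverting $a_\xi$ yields $\Z/p[a_\xi^\pm,u_\xi,\kappa_\xi]/(\kappa_\xi^2)$ with $\kappa_\xi=u_\xi x$. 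With both outer terms described through the same localization, $\partial$ is determined by degree matching: its kernel is the non-negative-$a_\xi$-power summand $\Z/p[a_\xi,u_\xi,\kappa_\xi]/(\kappa_\xi^2)$, and its cokernel is $\Z/p\{\Sigma^{-1}u_\xi^{-j}a_\xi^{-k}\}\oplus \Z/p\{\Sigma^{-1}\kappa_\xi u_\xi^{-j}a_\xi^{-k}\}$ for $j,k>0$. Splitting the short exact sequence then produces precisely the stated decomposition.

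The main obstacle I expect is pinning down $\partial$ exactly, rather than merely bounding it: one must verify that the boundary annihilates exactly the non-negatively-$a_\xi$-graded classes and surjects onto exactly the claimed negative-power classes. This rests on the two structural facts above, namely the injectivity of the $a_\xi$-localization map and the vanishing of the transfer at $\pi_0$. Once those are secured, the kernel and cokernel are forced by bookkeeping in the $m+n\xi$ grading, and no further delicate equivariant input is needed to assemble the final answer.
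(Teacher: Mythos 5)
Your proposal is correct and follows essentially the same route as the paper: the same Tate-square analysis, identifying $\pi_\bigstar F(EG_+,H\uZp)\cong\Z/p[x,a_\xi,u_\xi^\pm]/(x^2)$, using the $G$-equivalence of the left vertical map and injectivity of the $a_\xi$-localization to get the $EG_+$-part, the transfer-vanishing and $a_\xi$-periodicity argument for $\widetilde{EG}\wedge H\uZp$, and finally the kernel/cokernel computation of the connecting map. The only cosmetic difference is that you make the splitting of the resulting short exact sequence of $\F_p$-vector spaces explicit, which the paper leaves implicit.
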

This method was carried out in \cite[Proposition 6.3]{MZ17} for $H\uZ$. It is instructive to compare the results. We call the part $\Z/p[a_\xi, u_\xi, \kappa_\xi]/(\kappa_\xi^2)$ as the {\bf top cone} and the divisible part $\Z/p \{\Sigma^{-1} \frac{1}{u_\xi^{j} a_\xi^{k}} \} \oplus \Z/p\{\Sigma^{-1}\frac{\kappa_\xi}{u_\xi^{j}a_\xi^k}\}$ as the {\bf bottom cone}. 
\vspace{1cm}

We deduce the $p=2$ calculation in an analogus manner. There is only one non-trivial irreducible $C_2$-representation namely the sign representation $\sigma$. From Definition \ref{cohgen} we have two classes $u_\sigma \in  \tilde{H}^{\sigma-1}_G(S^0; \uZZ)$ and  $a_\sigma \in \tilde{H}^\sigma_{G}(S^0; \uZZ)$. Consider the Tate square \eqref{tate2}
\begin{myeq}\label{tate2}
\xymatrix{EG_+ \wedge H \uZZ\ar[d]^{\cong} \ar[r]& H \uZZ \ar[r]\ar[d] & \widetilde{EG}\wedge H \uZZ \ar[d] \\ EG_+ \wedge F(EG_+, H\uZZ) \ar[r] & F(EG_+, H\uZZ) \ar[r]& \widetilde{EG}\wedge F(EG_+, H\uZZ)}
\end{myeq}
As in the case of $p$ odd, we have isomorphisms 
$$\tilde{H}^\sigma_{G}({EG}_+; \uZZ) \to \tilde{H}^\sigma_{G}(S(2\sigma)_+; \uZZ)$$ 
and 
$$\tilde{H}^{\sigma}_G(S^0; \uZZ) \cong \tilde{H}^\sigma_{G}(S(2\sigma)_+; \uZZ).$$
This allows us to write using Proposition \ref{addstr} 
$$\pi_\bigstar F({EG}_+, H\uZZ) \cong \Z/2[a_\sigma, u_\sigma^\pm]$$
 as before. This yields 
$$\pi_\bigstar \widetilde{EG}\wedge F({EG}_+\wedge H\uZZ)\cong \Z/2[a_\sigma^\pm, u_\sigma^\pm]$$
 and the map
 $$\pi_\bigstar F({EG}_+, H\uZZ) \to \pi_\bigstar \widetilde{EG}\wedge F({EG}_+\wedge H\uZZ)$$
 is injective. Therefore, 
$$\pi_\bigstar {EG}_+\wedge H\uZZ \cong \pi_\bigstar {EG}_+\wedge F({EG}_+, H\uZZ) \cong \oplus \Z/2\{ \Sigma^{-1}\frac{u_\sigma^j}{a_\sigma^k}\} \text{ for } j \in \Z, k \geq 1.$$ 
Thus, in integer grading we obtain 
$$\pi_\ast {EG}_+\wedge H\uZZ \cong \oplus \Z/2\{ \Sigma^{-1}\frac{u_\sigma^k}{a_\sigma^k}\} \text{ for } k \geq 1.$$ 
This implies 
$$\pi_\ast \widetilde{EG} \wedge H\uZZ\cong \Z/2\{ 1\} \oplus \Z/2 \{\frac{u_\sigma^k}{a_\sigma^k} \}.$$
 Therefore, the $a_\sigma$-periodicity of $\pi_\bigstar \widetilde{EG} \wedge H \uZZ$ yields 
$$\pi_\bigstar \widetilde{EG} \wedge H \uZZ \cong Z/2[a_\sigma^\pm, u_\sigma].$$ 
Therefore, the kernel of the boundary 
$$\pi_\bigstar \widetilde{EG} \wedge H \uZZ \to \pi_{\bigstar-1} {EG}_+ \wedge H \uZZ$$
 is $\Z/2[a_\sigma, u_\sigma]$, and the cokernel is 
$$\oplus \Z/2\{\Sigma^{-1}u_\sigma^{-j}a_\sigma^{-k} \} \text{ for } j,k >0.$$ 
Therefore, 
\begin{prop}\label{cohpt2}
$$\pi_\bigstar (H \uZZ) \cong \Z/2 [a_\sigma, u_\sigma] \oplus \Z/2\{\Sigma^{-1}u_\sigma^{-j}a_\sigma^{-k} \} \text{ for } j,k >0.$$
\end{prop}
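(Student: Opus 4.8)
The plan is to run the same Tate-square argument as in the odd case, now for $G=C_2$ with $\uZZ$ coefficients, and to keep track of the one structural simplification: $H^\ast(BC_2;\Z/2)$ is polynomial on a single degree-one generator, so there is no $\kappa$-class and no truncating relation. First I would compute $\pi_\bigstar F({EG}_+,H\uZZ)$. By Proposition \ref{addstr} applied to the free $C_2$-space $EG$ one has $\tilde H^\bigstar_G({EG}_+;\uZZ)\cong H^\ast(BC_2;\Z/2)\otimes\Z/2[u_\sigma^\pm]$, and after fixing generators via the isomorphisms $\tilde H^\sigma_G({EG}_+;\uZZ)\cong\tilde H^\sigma_G(S(2\sigma)_+;\uZZ)\cong\tilde H^\sigma_G(S^0;\uZZ)$ coming from the cofibre sequences $S(2\sigma)_+\to S^0\to S^{2\sigma}$ and $S(\sigma)_+\to S^0\to S^\sigma$, this ring is identified as $\Z/2[a_\sigma,u_\sigma^\pm]$.

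Next I would localize. Since $\widetilde{EG}\simeq\varinjlim_n S^{n\sigma}$, smashing with $\widetilde{EG}$ inverts $a_\sigma$, so $\pi_\bigstar\widetilde{EG}\wedge F({EG}_+,H\uZZ)\cong\Z/2[a_\sigma^\pm,u_\sigma^\pm]$, and the localization map out of $\pi_\bigstar F({EG}_+,H\uZZ)$ is the evident inclusion, hence injective. Reading off the resulting cofibre in the bottom row of \eqref{tate2}, and using that the left vertical map is an equivalence by \cite[Proposition 1.1]{GM95}, gives the divisible group $\pi_\bigstar{EG}_+\wedge H\uZZ\cong\bigoplus\Z/2\{\Sigma^{-1}u_\sigma^j/a_\sigma^k\}$ for $j\in\Z$, $k\geq1$.

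I would then determine $\pi_\bigstar\widetilde{EG}\wedge H\uZZ$. By $a_\sigma$-periodicity it is enough to compute integer gradings, where the relevant input is the map $\pi_0({EG}_+\wedge H\uZZ)\to\pi_0(H\uZZ)$; this is the transfer of the constant Mackey functor $\uZZ$, which for $C_2$ is multiplication by the index $2$ and therefore vanishes mod $2$. Feeding this into the integer-graded top cofibre sequence yields $\pi_\ast\widetilde{EG}\wedge H\uZZ\cong\Z/2\{1\}\oplus\Z/2\{u_\sigma^k/a_\sigma^k\}$ for $k\geq1$, and inverting $a_\sigma$ gives $\pi_\bigstar\widetilde{EG}\wedge H\uZZ\cong\Z/2[a_\sigma^\pm,u_\sigma]$.

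Finally I would assemble the answer from the long exact sequence of the top row of \eqref{tate2} via the boundary $\partial\colon\pi_\bigstar\widetilde{EG}\wedge H\uZZ\to\pi_{\bigstar-1}{EG}_+\wedge H\uZZ$. I expect $\ker(\partial)$ to be the polynomial part $\Z/2[a_\sigma,u_\sigma]$ and $\coker(\partial)$ to be the divisible part $\bigoplus\Z/2\{\Sigma^{-1}u_\sigma^{-j}a_\sigma^{-k}\}$ for $j,k>0$; these are exactly the two summands in the statement. The main obstacle is pinning down this boundary map precisely, i.e.\ tracking which monomials of $\Z/2[a_\sigma^\pm,u_\sigma]$ hit which divisible classes so as to read off the kernel and cokernel correctly. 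Once the transfer vanishing and the injectivity of the localization map are established, the remaining work is bookkeeping that parallels the odd case, simplified by the absence of any $\kappa_\sigma$-type class.
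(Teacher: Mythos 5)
Your proposal follows essentially the same route as the paper: identify $\pi_\bigstar F({EG}_+,H\uZZ)\cong\Z/2[a_\sigma,u_\sigma^\pm]$ via Proposition \ref{addstr} and the $S(2\sigma)$ comparison, invert $a_\sigma$ to get the Tate side, use injectivity plus the left-vertical equivalence of \eqref{tate2} to read off $\pi_\bigstar{EG}_+\wedge H\uZZ$, use the vanishing of the transfer and $a_\sigma$-periodicity to get $\pi_\bigstar\widetilde{EG}\wedge H\uZZ\cong\Z/2[a_\sigma^\pm,u_\sigma]$, and finish with the kernel/cokernel of the boundary. The final ``bookkeeping'' you flag is resolved exactly as the paper does it implicitly: by commutativity of the Tate square the top boundary is the restriction of the bottom boundary along the inclusion $\Z/2[a_\sigma^\pm,u_\sigma]\subset\Z/2[a_\sigma^\pm,u_\sigma^\pm]$, so monomials with $a_\sigma$-exponent $\geq 0$ form the kernel and the unreached divisible classes $\Sigma^{-1}u_\sigma^{-j}a_\sigma^{-k}$, $j,k>0$, form the cokernel.
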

This method has been carried out for $H\uZ$ in \cite[Proposition 6.5]{MZ17} and \cite{JG17}.

\begin{prop} \label{bock}
The action of the Bockstein $\beta$ on $\tilde{H}^\bigstar_G(S^0)$ is given by the formula 
$$ \beta(\kappa_\xi) = a_\xi,~ \beta(u_\xi)=0,~ \beta(u_\sigma)=a_\sigma,~ \beta(a_\xi)=0,	~\beta(a_\sigma)=0.$$
\end{prop}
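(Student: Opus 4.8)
The plan is to use that $\beta$, being the connecting homomorphism of the short exact sequence of ring Mackey functors $0 \to \uZp \to \uZpp \to \uZp \to 0$, is a natural operation of integer degree $+1$ that is a derivation for the multiplicative structure on $\tilde{H}^{\bigstar}_{G}(S^0)$ and satisfies $\beta^2=0$. Since, by Propositions \ref{cohptodd} and \ref{cohpt2}, the top cone is generated as a ring by $a_\xi,u_\xi,\kappa_\xi$ (resp. $a_\sigma,u_\sigma$), and $\beta$ is determined by its values on multiplicative generators, it suffices to evaluate $\beta$ on the classes listed; I will only need these.

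First I would dispose of the vanishing statements by integrality. The Euler class $a_V$ is the Hurewicz image of the stable map $S^0\to S^V$, which is defined over $\Z$, so $a_\xi$ and $a_\sigma$ are the mod-$p$ reductions of classes in $\tilde{H}^{\bigstar}_{G}(S^0;\uZ)$. For $p$ odd the representation $\xi$ is orientable, so by Definition \ref{cohgen} the orientation class $u_\xi$ is likewise the reduction of an integral class in $\underline{\pi}_{2-\xi}(H\uZ)(G/G)$. Any class in the image of reduction from $\uZ$-coefficients is annihilated by $\beta$ (the integral Bockstein $\tilde\beta$ vanishes on reductions and $\beta=\rho\circ\tilde\beta$), giving $\beta(a_\xi)=\beta(a_\sigma)=\beta(u_\xi)=0$ at once. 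This argument is deliberately unavailable for $u_\sigma$: since $\sigma$ is non-orientable, $u_\sigma$ does not lift to $\uZ$-coefficients (only $u_\sigma^2=u_{2\sigma}$ does).

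For the two nonzero values I would pass to the cohomology of $EG_+$ along the ring map $q$ of \eqref{tate}, which is natural and hence commutes with $\beta$. The key technical input is that $q$ is injective on the top cone. For $p$ odd one has $u_\xi\mapsto u_\xi$, $a_\xi\mapsto u_\xi y$, and $\kappa_\xi\mapsto u_\xi x$ (the last because $\kappa_\xi$ maps to $u_\xi x$ in $\pi_{\bigstar}\widetilde{EG}\wedge H\uZp$, which lies in the $a_\xi$-nonlocalized part), so the monomial $a_\xi^iu_\xi^j\kappa_\xi^\epsilon\mapsto u_\xi^{i+j+\epsilon}x^\epsilon y^i$ determines $i,\epsilon,j$. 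In $\tilde{H}^{\bigstar}_{G}(EG_+;\uZp)\cong H^\ast(BG;\Z/p)\otimes\Z/p[u_\xi^\pm]$ one has the classical relation $\beta(x)=y$, so the derivation property gives $\beta(q(\kappa_\xi))=\beta(u_\xi x)=u_\xi\,\beta(x)=u_\xi y=q(a_\xi)$, using $\beta(u_\xi)=0$ and that $u_\xi$ has even underlying dimension (so the Leibniz sign is $+$). Injectivity of $q$ then yields $\beta(\kappa_\xi)=a_\xi$.

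The remaining, least formal, step is $\beta(u_\sigma)=a_\sigma$ for $p=2$, where integrality is unavailable. Here $q$ is again injective on the top cone $\Z/2[a_\sigma,u_\sigma]$, and $\tilde{H}^{\bigstar}_{G}(EG_+;\uZZ)\cong H^\ast(BC_2;\Z/2)\otimes\Z/2[u_\sigma^\pm]$ with $q(a_\sigma)=w\,u_\sigma$, where $w\in H^1(BC_2;\Z/2)$ is the generator satisfying $\beta(w)=w^2$. If $\beta(u_\sigma)$ were $0$, then since $\beta(a_\sigma)=0$ the derivation $\beta$ would vanish on all of $\Z/2[a_\sigma,u_\sigma^\pm]=\Z/2[w,u_\sigma^\pm]$, contradicting $\beta(w)=w^2\neq 0$; hence $\beta(u_\sigma)\neq 0$. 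A degree count shows the only nonzero class of degree $\sigma$ in the top cone is $a_\sigma$, so $\beta(u_\sigma)=a_\sigma$ in $\tilde{H}^{\bigstar}_{G}(EG_+;\uZZ)$, and injectivity of $q$ transports this back to $\tilde{H}^{\bigstar}_{G}(S^0;\uZZ)$. The main obstacle throughout is bookkeeping — verifying injectivity of $q$ on the two top cones and checking the $RO(G)$-graded Leibniz signs — while the topological content is entirely classical: $\beta(x)=y$ on $H^\ast(BC_p)$, $\beta(w)=w^2$ on $H^\ast(BC_2)$, and the integrality of the Euler and orientation classes.
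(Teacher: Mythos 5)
Your proposal is correct, and it splits naturally into a part that coincides with the paper's argument and a part that is genuinely different. For $p$ odd you do exactly what the paper does: kill $\beta(u_\xi)$ by a lifting argument, then apply the Leibniz rule to $q^\ast(\kappa_\xi)=u_\xi x$ in $\tilde{H}^\bigstar_G(EG_+)$ where $\beta(x)=y$, and transport back along $q^\ast$, which is injective on the top cone; the only cosmetic difference is that you lift $u_\xi$ to $\uZ$-coefficients via orientability, whereas the paper lifts it to $\uZpp$ by exhibiting compatible representing maps $S^2\to S^\xi\wedge H\uZpp$ and $S^2\to S^\xi\wedge H\uZp$ — the same mechanism (a class in the image of reduction is annihilated by the connecting map). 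At $p=2$ your route diverges from the paper's. The paper argues at the chain level: it takes the cell structure $S^\sigma=S^0\cup C_2/e\times \DD^1$, observes the cellular differential is the transfer (multiplication by $2$), compares the two-term complexes $\Z/k\xrightarrow{2}\Z/k$ for $k=2,4$ to conclude $u_\sigma$ admits no $\underline{\Z/4}$-lift, hence $\beta(u_\sigma)\neq 0$ and so $\beta(u_\sigma)=a_\sigma$ by degree count; it then gets $\beta(a_\sigma)=0$ for free from $\beta^2=0$. You instead prove $\beta(a_\sigma)=0$ first (integrality of the Euler class), and then use the classical relation $\beta(w)=w^2\neq 0$ on $H^\ast(BC_2;\Z/2)$ together with $w=a_\sigma u_\sigma^{-1}$ and the Leibniz rule in $\Z/2[w,u_\sigma^\pm]$ to force $\beta(u_\sigma)\neq 0$. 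Both are valid: the paper's computation is self-contained and presupposes nothing about $\beta(a_\sigma)$, while yours avoids any equivariant chain computation at the cost of two extra inputs — the integral lift of $a_\sigma$, and the compatibility of the equivariant Bockstein on $\tilde{H}^\bigstar_G(EG_+;\uZZ)$ in integer degrees with the classical Bockstein on $H^\ast(BC_2;\Z/2)$. That compatibility is of the same kind the paper itself invokes implicitly at odd primes when it normalizes $x$ by $\beta(x)=y$, so it is a fair assumption, but it deserves the one-line justification (for free actions, integer-graded Bredon cohomology with constant coefficients is the cohomology of the orbit space, naturally in the coefficient Mackey functor).
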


\begin{proof}
Observe from the computations in Propositions \ref{cohpt2} and \ref{cohptodd} that the map $q^\ast : \tHbs(S^0) \to \tHbs(EG_+)$ is injective on the top cone, so that we may prove the relations in the statement either in the cohomology of $S^0$ or in the cohomology of $EG_+$. We know that the Bockstein homomorphism is a derivation. Therefore, we have 
$$\beta(\kappa_\xi) = \beta(u_\xi x) = \beta(u_\xi)x + u_\xi \beta(x).$$
 So in order to prove the first equality, it is enough to check that $\beta(u_\xi) =0.$ Note that for the cofibre sequence 
 
 $$H\uZp \to H\uZpp \stackrel{\pi}{\to} H\uZp$$ 
 the Bockstein homomorphism fits into the cohomology long exact sequence 
 
$$\cdots \tilde{H}^{\xi-2}_G(S^0; \uZpp) \stackrel{\pi_\ast}{\to} \tilde{H}^{\xi-2}_G(S^0;\uZp) \stackrel{\beta}{\to} \tilde{H}^{\xi-1}_G(S^0; \uZp)\cdots$$
Since, the class $u_\xi \in \tilde{H}^\bigstar_G(S^0; \uZpp)$ can be represented by the map 
$$S^2 \to S^\xi\wedge H\uZpp$$ 
and also the class $u_\xi \in \tilde{H}^\bigstar_G(S^0; \uZp)$ represented analogously by 
$$S^2 \to S^\xi \wedge H\uZp$$ 
and they fit together into the following diagram 
$$\xymatrix{S^2 \ar[r]^-{u_\xi}\ar[dr]_-{u_\xi} & S^\xi \wedge H\uZpp \ar[d]^{1\wedge \pi} \\ 
                                                                    & S^\xi \wedge H\uZp}$$ 
Hence, we have $\pi_\ast(u_\xi) =u_\xi.$ This yields the Bockstein sends the class $u_\xi$ to zero.  Therefore, we obtain $\beta(\kappa_\xi) = a_\xi.$ This completes the proof for $p$ odd. 

In the case $p=2$, we again use the fact that $\beta$ is a derivation and $\beta^2=0$, so that it suffices to verify that $\beta(u_\sigma)=a_\sigma$. We directly compute a cell structure on $S^\sigma$ as $S^0 \cup C_2/e \times \DD^1$ with the boundary map generated by identity which may be identified as $C_2/e \times S^0 \to pt \times S^0$. Therefore, the reduced Bredon homology with constant coefficients $\Z/k$ is computed by the two term cell complex 
$$ \Z/k \stackrel{2}{\to} \Z/k$$
as the differential is generated by the transfer map which is multiplication by $2$. Directly comparing the answers for $k=2$ and $k=4$, we see that the map $H_1(S^\sigma; \underline{\Z/4}) \to H_1(S^\sigma; \uZZ)$ is $0$. Hence it follows that $u_\sigma$ does not lie in the image of $H^{\sigma-1}_G(S^0;\underline{\Z/4}) \to H^{\sigma -1}_G (S^0;\uZZ)$, and thus the only possibility is $\beta(u_\sigma)= a_\sigma$. 
\end{proof}

\subsection{Module structure of $\tilde{H}^{\bigstar}_{G}(X_+;\uZp)$}\label{module}
In this section, we compute the $\tilde{H}^\bigstar_G(S^0;\uZp)$-module structure of $\tilde{H}^\bigstar_G(X_+;\uZp)$. Note that this is determined by the action of the elements $u_\sigma, a_\sigma$ for $p=2$, and $u_\xi$, $\kappa_\xi$ and $a_\xi$ for $p$ odd. We write $q:X_+ \to S^0$ for the map which quotients out $X$ to a single point. Then, the elements in  $\tilde{H}^\bigstar_G(S^0;\uZp)$ act via multiplication by their images under $q^\ast$ which is a ring map. Therefore it suffices to compute the images of the generators under $q^\ast$. We have already done this for $X=EG$. Hence, we readily obtain 
\begin{prop}\label{mod}
1) Let $p$ be an odd prime. Then the action of 
$a_\xi, u_\xi \in \tilde{H}^\bigstar_{G}(S^0;\uZp)$ on $\tilde{H}^\bigstar_{G} ({EG}_+;\uZp) \cong \Z/p[x,a_\xi,u_\xi^\pm]/(x^2)$ is given by multiplication by the corresponding elements. The action of $\kappa_\xi$ equals multiplication by $xu_\xi$.   \\
2) Let $p=2$. Then the action of 
$a_\sigma, u_\sigma \in \tilde{H}^\bigstar_{G}(S^0;\uZZ)$ on $\tilde{H}^\bigstar_{G} ({EG}_+;\uZZ) \cong \Z/2[a_\sigma,u_\sigma^\pm]$ is given by multiplication by the corresponding elements. 
\end{prop}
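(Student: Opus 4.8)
The plan is to exploit the observation, already recorded just above the statement, that the module action of $\tilde{H}^\bigstar_G(S^0;\uZp)$ on $\tilde{H}^\bigstar_G(EG_+;\uZp)$ factors as the ring homomorphism $q^\ast$ followed by multiplication in the target. Thus the whole proposition reduces to identifying the images $q^\ast(u_\xi)$, $q^\ast(a_\xi)$, $q^\ast(\kappa_\xi)$ for $p$ odd, and $q^\ast(u_\sigma)$, $q^\ast(a_\sigma)$ for $p=2$, since (as noted in Section \ref{module}) the module structure is determined by the action of precisely these generators.

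First I would recall that two of these images were already pinned down in Section \ref{freess} and in the subsequent computation of $\tilde{H}^\bigstar_G(S^0;\uZp)$: the class $u_\xi$ maps to the chosen generator $u_\xi$ of $\tilde{H}^{\xi-2}_G(EG_+;\uZp)$, and the generator $y$ of $\tilde{H}^\bigstar_G(EG_+;\uZp)$ was fixed precisely so that $q^\ast(a_\xi) = u_\xi y$. Since $u_\xi$ is invertible, rewriting $y = u_\xi^{-1} a_\xi$ presents the target as $\Z/p[x, a_\xi, u_\xi^\pm]/(x^2)$, in which $q^\ast(a_\xi) = a_\xi$ and $q^\ast(u_\xi) = u_\xi$ by construction; the corresponding actions are then multiplication by these elements, essentially tautologically. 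The case $p=2$ is entirely parallel, with $\sigma$ in place of $\xi$ and $\Z/2[a_\sigma, u_\sigma^\pm]$ in place of the odd-$p$ ring, and there is no further top-cone generator to treat.

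The one computation with genuine content is $q^\ast(\kappa_\xi)$. By degree, $q^\ast(\kappa_\xi)$ lies in $\tilde{H}^{\xi-1}_G(EG_+;\uZp)$, and inspecting the monomials $x^i y^j u_\xi^k$ of $\Z/p[x,y,u_\xi^\pm]/(x^2)$ in degree $\xi-1$ forces $k=1$, $i=1$, $j=0$, so this group is one-dimensional, spanned by $x u_\xi$; hence $q^\ast(\kappa_\xi) = c\,x u_\xi$ for some $c \in \Z/p$. To fix $c$ I would apply the Bockstein: since $q^\ast$ commutes with $\beta$ and $\beta(\kappa_\xi) = a_\xi$ by Proposition \ref{bock}, we get $\beta(q^\ast \kappa_\xi) = q^\ast(a_\xi) = u_\xi y$. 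On the other hand, using that $\beta$ is a derivation together with $\beta(u_\xi) = 0$ and the relation $\beta(x) = y$ that was used to fix $x$, we compute $\beta(c\,x u_\xi) = c\,u_\xi y$. Comparing gives $c = 1$, so $q^\ast(\kappa_\xi) = x u_\xi$, which is the asserted action.

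The main obstacle is therefore localized entirely in the $\kappa_\xi$ step; everything else is forced by the earlier identifications of $q^\ast$ on $u_\xi$, $a_\xi$ (respectively $u_\sigma$, $a_\sigma$) and by the renaming of generators. The point requiring care is that the generators $x$ and $y$ are the Bockstein-compatible ones arranged in the cohomology-of-$S^0$ computation, so that $\beta(x)=y$ may legitimately be invoked; granting this, the derivation property of $\beta$ completes the argument.
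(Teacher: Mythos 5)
Your overall reduction is the same as the paper's: the module structure is multiplication by $q^\ast$ of the generators, and for $u_\xi$, $a_\xi$ (and $u_\sigma$, $a_\sigma$ when $p=2$) the identifications were already fixed in the Tate-square computation of $\tilde{H}^\bigstar_G(S^0;\uZp)$, so those parts of your argument coincide with what the paper does (its entire proof is, in effect, ``we have already done this for $X=EG$'').

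The problem is your $\kappa_\xi$ step. You pin down $q^\ast(\kappa_\xi)$ by a degree count plus the relation $\beta(\kappa_\xi)=a_\xi$ quoted from Proposition \ref{bock}. But look at how the paper proves Proposition \ref{bock}: its first displayed equation is $\beta(\kappa_\xi)=\beta(u_\xi x)=\beta(u_\xi)x+u_\xi\beta(x)$, i.e.\ it computes inside $\tilde{H}^\bigstar_G(EG_+;\uZp)$ after identifying $\kappa_\xi$ with $u_\xi x$ there (legitimately, since $q^\ast$ is injective on the top cone). That identification $q^\ast(\kappa_\xi)=u_\xi x$ is exactly the statement you are trying to prove, so invoking Proposition \ref{bock} here makes the paper's logic circular: \ref{bock} needs the image of $\kappa_\xi$ in $\tilde{H}^\bigstar_G(EG_+;\uZp)$, and you use \ref{bock} to find that image. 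In fact the step you call ``the one computation with genuine content'' requires no Bockstein at all and is essentially definitional: $\kappa_\xi$ is \emph{defined} in the Tate-square computation as the (unique, since the bottom cone vanishes in degree $\xi-1$) top-cone class whose image in $\pi_\bigstar \widetilde{EG}\wedge F(EG_+,H\uZp)\cong \Z/p[x,a_\xi^\pm,u_\xi^\pm]/(x^2)$ is $u_\xi x$; commutativity of the Tate square \eqref{tate} together with injectivity of the localization map $\pi_\bigstar F(EG_+,H\uZp)\to \pi_\bigstar \widetilde{EG}\wedge F(EG_+,H\uZp)$ then forces $q^\ast(\kappa_\xi)=u_\xi x$ on the nose. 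Your argument would be acceptable only if Proposition \ref{bock} were given a proof independent of this identification (the $\beta(u_\xi)=0$ part is independent, but the $\beta(\kappa_\xi)=a_\xi$ part is not); as written, your proof cannot be substituted for the paper's without creating a circle, so you should replace the Bockstein step by the direct Tate-square identification.
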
 

Next we consider a general free $G$-space $X$, which is non-equivariantly connected. We know that if $X$ had some point with a non-trivial stabilizer, then $S^0$ becomes an equivariant retract of $X_+$, so the map $q^\ast$ identifies $\tilde{H}^{\bigstar}_{G}(X_+;\uZp)$ as a subring which is included in $\tilde{H}^{\bigstar}_{G}(X^G_+;\uZp)$. So we restrict our attention to the free case, where $X \to X/G$ is a covering space with $G$ acting on $X$ by Deck transformations. This induces a homomorphism $\tau : \pi_1(X/G) \to G \cong \Z/p$, which is well-defined up to a unit in $\Z/p$. This gives an element of $\tilde{H}^1(X/G; \Z/p)$ which we also denote by $\tau$. We now have the following Proposition regarding the action of $\tilde{H}^\bigstar_G(S^0;\uZp)$ on $\tilde{H}^\bigstar_G(X_+;\uZp)$. 
\begin{prop}\label{modX}
1) Let $p$ be an odd prime. Then the action of 
$u_\xi \in \tilde{H}^\bigstar_{G}(S^0;\uZp)$ on $\tilde{H}^\bigstar_{G} (X_+;\uZp) \cong H^\ast(X/G;\Z/p) \otimes \Z/p[u_\xi^\pm]$ is given by multiplication by the corresponding element. The action of $\kappa_\xi$ equals multiplication by $\tau u_\xi$. The action of $a_\xi$ is given by multiplication by $\beta(\tau) u_\xi$.  \\
2) Let $p=2$. Then the action of 
$u_\sigma \in \tilde{H}^\bigstar_{G}(S^0;\uZZ)$ on $\tilde{H}^\bigstar_{G} (X_+;\uZZ) \cong H^\ast(X/G;\Z/2)\otimes \Z/2[u_\sigma^\pm]$ is given by multiplication by the corresponding element. The action of $a_\sigma$ is given by multiplication by $\tau u_\sigma$.  
\end{prop}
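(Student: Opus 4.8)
The plan is to reduce the entire computation to the already-resolved case $X = EG$ of Proposition \ref{mod} by exploiting the universal property of $EG$ among free $G$-spaces. Since $X$ is a free connected $G$-CW complex, there is a $G$-map $f : X \to EG$, unique up to $G$-homotopy, and on orbit spaces it descends to the classifying map $\bar{f} : X/G \to BG$ of the covering $X \to X/G$. On fundamental groups $\bar{f}$ induces precisely the monodromy homomorphism $\tau : \pi_1(X/G) \to G \cong \Z/p$, so that $\bar{f}^\ast(x) = \tau$ in $H^1(X/G; \Z/p)$, where $x \in H^1(BG;\Z/p)$ is the degree-one generator fixed in Section \ref{freess}.

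First I would observe that the collapse maps fit into a commuting triangle $q_X = q_{EG} \circ f_+$, where $f_+ : X_+ \to EG_+$ is the induced map; this holds on the nose, since both composites send the added basepoint to the basepoint and all of $X$ to the non-basepoint of $S^0$. Consequently $q_X^\ast = f_+^\ast \circ q_{EG}^\ast$, and since the action of $\tilde{H}^\bigstar_G(S^0;\uZp)$ on a module is multiplication by the image under the relevant $q^\ast$, it suffices to compute $f_+^\ast$ applied to the classes $q_{EG}^\ast(u_\xi)$, $q_{EG}^\ast(\kappa_\xi)$ and $q_{EG}^\ast(a_\xi)$ determined in Proposition \ref{mod}.

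Next I would invoke naturality of the spectral sequence of Proposition \ref{HFPSS} in the free $G$-space: the isomorphism of Proposition \ref{addstr} is natural, so under the identifications $\tilde{H}^\bigstar_G(EG_+;\uZp) \cong H^\ast(BG;\Z/p) \otimes \Z/p[u_\xi^\pm]$ and $\tilde{H}^\bigstar_G(X_+;\uZp) \cong H^\ast(X/G;\Z/p) \otimes \Z/p[u_\xi^\pm]$ the map $f_+^\ast$ is $\bar{f}^\ast \otimes \mathrm{id}$ and carries $u_\xi$ to $u_\xi$. Combining with Proposition \ref{mod}, which gives $q_{EG}^\ast(u_\xi) = u_\xi$, $q_{EG}^\ast(\kappa_\xi) = x u_\xi$, and (from the normalization $a_\xi \mapsto y u_\xi$ with $y = \beta(x)$ fixed in Section \ref{freess}) $q_{EG}^\ast(a_\xi) = \beta(x) u_\xi$, I would read off $q_X^\ast(u_\xi) = u_\xi$, then $q_X^\ast(\kappa_\xi) = \bar{f}^\ast(x) u_\xi = \tau u_\xi$, and, using naturality of the Bockstein, $q_X^\ast(a_\xi) = \bar{f}^\ast(\beta(x)) u_\xi = \beta(\tau) u_\xi$. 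The case $p = 2$ is identical, with the single generator $w \in H^1(BC_2;\Z/2)$ playing the role of $x$: there $a_\sigma$ corresponds to $w u_\sigma$, so $q_X^\ast(a_\sigma) = \bar{f}^\ast(w) u_\sigma = \tau u_\sigma$, and no Bockstein intervenes.

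The step I expect to be the main obstacle is pinning down the identification $\bar{f}^\ast(x) = \tau$ together with the naturality claim that $f_+^\ast$ preserves $u_\xi$ and acts as $\bar{f}^\ast$ on the $H^\ast(-)$ factor. The former requires matching two a priori different descriptions of the degree-one class — the cohomological generator $x$ and the monodromy of the cover — a standard but coefficient-sensitive identification via $H^1(-;\Z/p) \cong \Hom(\pi_1(-),\Z/p)$; the latter hinges on the fact, established for arbitrary $G$-spaces in Section \ref{freess}, that $u_\xi$ restricts to a canonical generator independent of $X$, so that the natural transformation induced by $f$ is the identity on the polynomial variables.
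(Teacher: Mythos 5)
Your proof is correct, but it takes a genuinely different route from the paper's. The paper argues intrinsically on $X$: it first proves that $q^\ast:\tilde{H}^{\xi-1}_G(S^0;\uZp)\to\tilde{H}^{\xi-1}_G(X_+;\uZp)$ is injective by showing $[C(q),S^{\xi-1}\wedge H\uZp]^G=0$ (using the cofibre sequences $S(\xi)_+\to S^0\to S^\xi$ and $C_p/e_+\to C_p/e_+\to S(\xi)_+$ together with simple connectivity of $C(q)$), concluding $q^\ast(\kappa_\xi)=\phi u_\xi$ with $\phi\neq 0$; it then pins down $\phi=\tau$ geometrically, by restricting to the equivariant circles $S(\xi)$ in the $1$-skeleton of $X$ and matching kernels of homomorphisms $\pi_1(X/G)\to\Z/p$. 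You instead work ``from above'': factor $q_X$ through the classifying map $f:X\to EG$, invoke naturality of the identification in Proposition \ref{addstr} to get $f_+^\ast=\bar f^\ast\otimes\mathrm{id}$, quote Proposition \ref{mod} for the universal case, and use the covering-space identification $\bar f^\ast(x)=\tau$ plus naturality of the Bockstein. Each approach has its advantages: the paper's yields the injectivity statement (of independent use) and needs no appeal to classifying-space theory, while yours is shorter, makes the naturality of $\tau$ (which the paper must remark on separately) automatic, and handles $a_\xi$ and $a_\sigma$ uniformly via $\beta$. Two points deserve care in your version. First, naturality of Proposition \ref{addstr} requires a cellular representative of $f$ (equivariant cellular approximation) for the spectral-sequence comparison; alternatively you can bypass the spectral sequence entirely by noting that the $u_\xi$ of Proposition \ref{addstr} may be taken to be $q^\ast u_\xi$ (as the paper arranges in its $S^0$ section), so $f_+^\ast(u_\xi)=u_\xi$ is automatic, and the integer-graded identification $\tilde{H}^\ast_G(X_+;\uZp)\cong H^\ast(X/G;\Z/p)$ is natural. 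Second, the equality $\bar f^\ast(x)=\tau$ holds only up to a unit of $\Z/p$, since both $x$ and the isomorphism $G\cong\Z/p$ defining $\tau$ involve choices; this is the same normalization the paper fixes at the end of its proof (``we may fix the choice of unit in the definition of $\tau$''), so your argument is on equal footing once you state that the unit is chosen to make the identification exact for $X=EG$.
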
 

\begin{proof}
We already know the action of $u_\xi$ (and $u_\sigma$ in the case $p=2$) from the calculation in Proposition \ref{addstr}. For proving the rest, we determine the images of $\kappa_\xi$ and $a_\xi$. If $p>2$, the action of $a_\xi$ follows from the action of $\kappa_\xi$ using the Bockstein as in Proposition \ref{bock}. Now, consider the projection map $q:X_+ \to S^0$ and its homotopy cofibre $C(q).$ As $X$ is (non-equivariantly) connected, it readily follows that $C(q)$ is non-equivariantly simply connected. 

For 1), we claim that the induced map 
$$q^\ast:\tilde{H}^{\xi-1}_G(S^0; \uZp) \to \tilde{H}^{\xi-1}_G(X_+; \uZp)$$ 
is injective. To prove the claim, it is enough to show that the group 
$$[C(q), S^{\xi-1}\wedge H\uZp]^{G}=0.$$ 
Note that this group fits into the long exact sequence 
\begin{myeq}\label{cp}
\xymatrix{\cdots [C(q),  \Sigma^{-1}H\uZp]^{G}\ar@{=}[d] \ar[r] & [C(q),  S^{\xi-1}\wedge H\uZp]^{G} \ar[r] & [C(q), S(\xi)_+\wedge H\uZp]^{G}\cdots\\ \tilde{H}^{-1}(C(q)/G; \Z/p)}
\end{myeq} 
associated to the cofibre sequence 
$$S(\xi)_+ \to S^0 \to S^\xi$$
Since, the group in the left is zero, therefore, we only remain to prove the group  $[C(q), S(\xi)_+\wedge H\uZp]^{G}$ vanishes. For this consider the cofibre sequence 
$$C_p/e_+ \to C_p/e_+ \to S(\xi)_+$$
It gives the long exact sequence
$$\xymatrix{\cdots [C(q), C_p/e_+ \wedge H\uZp]^G \ar@{=}[d]\ar[r] &  [C(q), S(\xi)_+\wedge H\uZp]^{G} \ar[r]  &[C(q), \Sigma C_p/e_+\wedge H\uZp]^{G} \ar@{=}[d]\cdots \\
 \tilde{H}^0(C(q); \Z/p) & & \tilde{H}^1(C(q); \Z/p)}$$
Since, $C(q)$ is simply connected, therefore, $[C(q), S(\xi)_+\wedge H\uZp]^{G}=0.$ Hence, using  \eqref{cp}, we obtain $[C(q),  S^{\xi-1}\wedge H\uZp]^{G}=0.$ Thus, we establish the claim. It follows that $q^\ast(\kappa_\xi)= \phi u_\xi$ for some $\phi \in H^1(X/G) = Hom(\pi_1(X/G),\Z/p)$ (using Proposition \ref{addstr}) which is non-zero. 

Now the above is true for any free $G$-space. Observe that the $1$-skeleton of $X$ is a union of copies of $S^1$ on which $G$ acts by multiplication by $p^{th}$ roots of $1$, which is equivariantly homeomorphic to $S(\xi)$. We, therefore, must have that $\phi$ pulls back non-trivially to the orbit space $S(\xi)/G$. This is also homeomorphic to $S^1$, with 
$$H^1(S(\xi)/G;\Z/p) \cong Hom(\pi_1(S(\xi)/G), \Z/p).$$
Hence, the pullback of $\phi$ must send $1\in \pi_1(S(\xi)/G)\cong \Z$ to a unit in $\Z/p$, and thus the kernel is the image of $\pi_1(S(\xi))$. As this is true for every map from $S(\xi)$ to $X$, the kernel of $\phi$ must be equal to the image of $\pi_1(X)$. Therefore, $\phi$ equals $\tau$ up to a unit of $\Z/p$. We may fix the choice of unit in the definition of $\tau$ so that $\phi=\tau$. Since the choice of $\phi$ is natural, this choice of $\tau$ is natural among equivariant maps between free $G$-spaces.      

For 2), consider the cofibre sequence ${C_2/e}_+ \to S^0 \to S^\sigma$ and its associated long exact sequence 
$$\xymatrix{ [C(q), H\uZZ ]^G \ar[r] \ar@{=}[d] & [C(q), S^\sigma \wedge H\uZZ]^G  \ar[r]  &[C(q), \Sigma {C_2}/e_+ \wedge H\uZZ]^G \ar@{=}[d] \cdots \\ 
\tilde{H}^0(C(q)/G, \Z/2) & & \tilde{H}^1(C(q), \Z/2)}$$

Since $C(p)$ is simply connected, the group $[C(q), S^\sigma \wedge H\uZZ]^G$ is trivial. Thus, the map 
$$\tilde{H}^\sigma_G(S^0; \uZZ) \to \tilde{H}^\sigma_G(X_+; \uZZ)$$
 is a monomorphism. Therefore, $q^\ast(a_\sigma)= \phi u_\sigma$ for some $\phi \in H^1(X/G;\Z/2)$.  Now we may apply a similar argument as above to deduce the result.
%
%
\end{proof}

\section{Applications to finite dimensional  $C_p$-spaces} 
We now restrict our attention to finite dimensional $C_p$-spaces, and focus on two kinds of examples -- the free case, and the case where the space is formed by attaching cells along representation spheres. In the former case, we obtain an invariant used to prove theorems about non-existence of equivariant maps. In the latter case, we obtain a freeness theorem that the  $RO(G)$-graded cohomology is a free module over the cohomology of point.  

\subsection{Cohomology of spheres} \label{tver}
Let $V$ be a fixed point free $C_p$-representation, that is, $V^G=0$. One has the cofibre sequence 
$$S(V)_+ \to S^0 \to S^V,$$
where the map $S^0 \to S^V$ induces the stable map $a_V$. Therefore, we have the associated long exact sequence
\begin{myeq}\label{exact}
\cdots \uH^{\alpha-1}_{C_p}(S(V)_+; \uZp)\to \uH^{\alpha-V}_{C_p}(S^0; \uZp) \stackrel{a_V.}{\to} \uH^{\alpha}_{C_p}(S^0; \uZp) \to \uH^{\alpha}_{C_p}(S(V)_+; \uZp)\cdots
\end{myeq} 
This allows us to compute  $\uH^\bigstar_{C_{p}}(S(V)_+; \uZp)$ from the formula for the ring structure on $\tilde{H}^{\bigstar}_{G}(S^0;\uZp)$ in Propositions \ref{cohptodd} and \ref{cohpt2}. We have the following useful Lemma. 
\begin{lemma}\label{sph}
Let $V$ be a fixed point free representation of $C_p$. Then, 
$$\uH^{V}_{G}(S(V)_+; \uZp)=0.$$
\end{lemma}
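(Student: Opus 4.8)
The goal is to show $\uH^V_G(S(V)_+;\uZp)=0$. My plan is to feed the cofibre sequence $S(V)_+ \to S^0 \to S^V$ into the long exact sequence \eqref{exact}, specialized to the grading $\alpha = V$. The relevant portion reads
\[
\uH^{V-1}_G(S(V)_+) \to \uH^{0}_G(S^0) \xrightarrow{a_V\cdot} \uH^{V}_G(S^0) \to \uH^{V}_G(S(V)_+) \to \uH^{V-V}_G(S(V)_+) = \uH^{0}_G(S(V)_+).
\]
Wait --- I need to be careful about which groups border $\uH^V_G(S(V)_+)$. From \eqref{exact} with $\alpha=V$ and then $\alpha=V+$(next shift), the term $\uH^V_G(S(V)_+)$ sits between the map $a_V\cdot:\uH^{V-V}_G(S^0)=\uH^0_G(S^0)\to \uH^V_G(S^0)$ on one side and a further restriction map on the other. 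So the key is to analyze the multiplication-by-$a_V$ map on the cohomology of $S^0$ and show it is surjective onto $\uH^V_G(S^0)$ in the appropriate degree, which forces the cokernel --- hence (part of) $\uH^V_G(S(V)_+)$ --- to vanish, and separately to control the image of the next group.

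Concretely, I would use the reduction \eqref{repfr} to replace $V$ by $n\xi$ (for $p$ odd) or $n\sigma$ (for $p=2$), where $n = \dim(V)/2$ or $n=\dim V$ respectively, since $a_V$ factors as a product $\prod a_{\xi^i}$ of the $a_\xi$-type classes and each $S^{\xi-\xi^i}\wedge H\uZp \simeq H\uZp$. Then I would read off from Proposition \ref{cohptodd} (resp. \ref{cohpt2}) exactly which monomials in $a_\xi,u_\xi,\kappa_\xi$ live in degree $V$, and check that multiplication by $a_V$ (equivalently by a power of $a_\xi$, up to the invertible $u$-twists) hits all of them. The crucial point is that in degree $V = n\xi$, the top cone element $a_\xi^n$ (and its $\kappa_\xi$-companion) is precisely $a_V$ times a generator of $\uH^0_G(S^0)\cong \Z/p$, so the map $a_V\cdot$ is onto the top cone in that degree; one must also verify there is no bottom-cone contribution in degree exactly $V$, which holds because the bottom cone lives in the region with negative powers of $u_\xi$ and $a_\xi$.

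The main obstacle I expect is bookkeeping of the exact position of $\uH^V_G(S(V)_+)$ in the long exact sequence and making sure both neighboring maps are controlled: surjectivity of $a_V\cdot$ handles the cokernel contribution, but I also need the connecting/restriction map into the \emph{next} term to be injective on the relevant piece, or equivalently to identify that the only classes in $\uH^V_G(S(V)_+)$ would have to come from $\uH^V_G(S^0)/\mathrm{im}(a_V)$. An alternative, cleaner route avoiding delicate sign/grading chases is to argue via the free-space formula: by Proposition \ref{addstr}, $\uH^\bigstar_G(S(V)_+)\cong H^\ast(S(V)/G)\otimes \bigotimes_\xi \Z/p[u_\xi^\pm]$, and $S(V)/G$ is a closed manifold of dimension $\dim V - 1$, so $H^\ast(S(V)/G)$ vanishes above degree $\dim V - 1$. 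Translating the grading $V$ into its image in this tensor factorization (using $|u_\xi|=\xi-2$ to absorb the representation part), degree $V$ corresponds to integer cohomological degree $\dim V$ of $S(V)/G$, which exceeds $\dim(S(V)/G)=\dim V -1$, forcing the group to vanish. I would likely present this second argument as the primary proof since it is self-contained given \ref{addstr}, and the hardest step there is simply verifying the grading translation, i.e.\ that the $H^V$ summand lands in $H^{\dim V}(S(V)/G;\Z/p)=0$.
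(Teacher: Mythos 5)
Your proposal is correct, and the route you elect as your primary proof is genuinely different from the paper's. The paper's proof is essentially your first sketch: it writes $a_V = a_\sigma^{\dim V}$ (for $p=2$), respectively $a_V = u\,a_\xi^{\dim(V)/2}$ with $u$ a unit coming from $S^{V-(\dim(V)/2)\xi}\wedge H\uZp \simeq H\uZp$ (for $p$ odd), reads off from Propositions \ref{cohpt2} and \ref{cohptodd} that multiplication by $a_V$ gives an isomorphism $\tilde{H}^0_G(S^0) \to \tilde{H}^V_G(S^0)$, and then invokes the long exact sequence \eqref{exact}. Your alternative bypasses the ring structure of the point entirely: since $V^G=0$ and $p$ is prime, $S(V)$ is a free $G$-space, so Proposition \ref{addstr} applies; the $u_\xi$-monomials have total degree $0$ and absorb the representation part of the grading uniquely, so the degree-$V$ summand is exactly $H^{\dim V}(S(V)/G;\Z/p)$, which vanishes because $S(V)/G$ has dimension $\dim V - 1$. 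This is shorter and self-contained given Proposition \ref{addstr}; what the paper's route buys is the fact it actually reuses downstream (Corollary \ref{borsul}, Theorem \ref{obs}), namely that $a_V\cdot 1$ generates $\tilde{H}^V_G(S^0)$ and controls the module structure, not merely that a certain group vanishes. Two small points to tighten. First, in your first sketch the phrase ``or equivalently'' is inaccurate: exactness gives a short exact sequence $0 \to \coker\bigl(a_V\colon \uH^0_G(S^0) \to \uH^V_G(S^0)\bigr) \to \uH^V_G(S(V)_+) \to \ker\bigl(a_V\colon \uH^1_G(S^0) \to \uH^{V+1}_G(S^0)\bigr) \to 0$, so you need both the cokernel and the kernel to vanish; the kernel term is handled by observing $\tilde{H}^1_G(S^0;\uZp)=0$ from Propositions \ref{cohptodd} and \ref{cohpt2} (the paper glosses over this point as well). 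Second, the lemma asserts vanishing of the Mackey functor $\uH$, while Proposition \ref{addstr} computes the $G/G$-level $\tilde{H}^\bigstar_G$; add the one-line remark that the $G/e$-level is $\tilde{H}^{\dim V}(S^{\dim V-1};\Z/p)=0$, which is immediate.
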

\begin{proof}
For $p=2$,  $V=\dim(V) \sigma$, and we have $a_V=a_\sigma^{\dim(V)}$. Proposition \ref{cohpt2} now implies that $\tH_G^0(S^0) \to \tH^V_G(S^0)$ induced by multiplication by $a_V$ is an isomorphism. For $p$ odd, $a_V= ua_\xi^{(\dim(V)/2)}$, where $u$ is a unit in $\pi_\bs H\uZp$ arising from the equivalence $S^{V-(\dim(V)/2)\xi}\wedge H\uZp \simeq H\uZp$.  It follows from Proposition \ref{cohptodd} that  $\tH_{C_p}^0(S^0) \to \tH^V_{C_p}(S^0)$ induced by multiplication by $a_V$ is an isomorphism. Now the result is implied directly by \eqref{exact}. 
%
%
\end{proof}

Lemma \ref{sph} allows us to conclude well-known Borsuk-Ulam type theorems \cite{IM00} for $G$. 
\begin{cor}\label{borsul}
Let $V$ and $V'$ be two fixed point free representations of $G$. Then there does not exist $G$-maps from $S(V) \to S(V')$ if $\dim(V) > \dim(V')$. 
\end{cor}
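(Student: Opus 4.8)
The plan is to assume, for contradiction, that a $G$-map $f\colon S(V)\to S(V')$ exists, and to obtain a contradiction by tracking the Euler class $a_{V'}$ under the two collapse maps. Write $q_V\colon S(V)_+\to S^0$ and $q_{V'}\colon S(V')_+\to S^0$ for the maps that crush $S(V)$, resp. $S(V')$, to the non-basepoint. Since both send every non-basepoint to the same point, we have $q_{V'}\circ f_+=q_V$, and hence $q_V^\ast=f_+^\ast\circ q_{V'}^\ast$ on $\tilde H^\bigstar_G(-;\uZp)$.

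The first observation is that $a_{V'}$ restricts trivially to $S(V')$. Indeed $q_{V'}^\ast(a_{V'})$ lies in $\uH^{V'}_G(S(V')_+;\uZp)$, which is zero by Lemma \ref{sph} applied to the representation $V'$. It follows immediately from functoriality that $q_V^\ast(a_{V'})=f_+^\ast\bigl(q_{V'}^\ast(a_{V'})\bigr)=0$ as well. The strategy is then to contradict this by showing directly that $q_V^\ast(a_{V'})\neq 0$ when $\dim(V)>\dim(V')$.

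For the second half I would use the long exact sequence \eqref{exact} for $V$ in degree $\alpha=V'$: the kernel of $q_V^\ast\colon \uH^{V'}_G(S^0;\uZp)\to \uH^{V'}_G(S(V)_+;\uZp)$ is exactly the image of multiplication by $a_V$ coming from $\uH^{V'-V}_G(S^0;\uZp)$. Thus it suffices to check two things: that $a_{V'}\neq 0$, and that the source group $\uH^{V'-V}_G(S^0;\uZp)$ vanishes (whence $q_V^\ast$ is injective in degree $V'$). The non-vanishing of $a_{V'}$ is clear, since it is a unit multiple of $a_\xi^{\dim(V')/2}$ for $p$ odd and equals $a_\sigma^{\dim(V')}$ for $p=2$, both nonzero elements of the top cone in Propositions \ref{cohptodd} and \ref{cohpt2}.

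The main obstacle, and the only real computation, is the vanishing of $\uH^{V'-V}_G(S^0;\uZp)$. Here I would first invoke \eqref{repfr} to replace the degree $V'-V$ by the pure multiple $\tfrac{\dim(V')-\dim(V)}{2}\,\xi$ when $p$ is odd (respectively $(\dim(V')-\dim(V))\,\sigma$ when $p=2$); since $\dim(V)>\dim(V')$ this is a strictly negative multiple of $\xi$ (resp. $\sigma$). Reading off the additive structure in Proposition \ref{cohptodd} (resp. \ref{cohpt2}), a degree count shows that neither a top-cone monomial $a_\xi^i u_\xi^j \kappa_\xi^\epsilon$ nor a bottom-cone class can occupy such a purely negative $\xi$-degree (resp. $\sigma$-degree), so the group is indeed $0$. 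With $q_V^\ast$ injective in degree $V'$ and $a_{V'}\neq 0$ we conclude $q_V^\ast(a_{V'})\neq 0$, contradicting the vanishing forced by Lemma \ref{sph}. Therefore no such $f$ exists.
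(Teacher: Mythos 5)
Your proposal is correct and takes essentially the same route as the paper: both arguments play off Lemma \ref{sph} (which forces $a_{V'}$ to die in $\tilde{H}^{V'}_G(S(V')_+;\uZp)$) against the injectivity of restriction along $q_V$ in degree $V'$, which follows from the long exact sequence \eqref{exact} together with the vanishing $\tilde{H}^{V'-V}_G(S^0;\uZp)=0$ obtained from \eqref{repfr} and Propositions \ref{cohptodd} and \ref{cohpt2}. Your naturality argument via $q_V = q_{V'}\circ f_+$ is just an explicit unwinding of the paper's statement that $f^*$ is a map of $\tilde{H}^\bigstar_G(S^0;\uZp)$-modules sending $1$ to $1$, with the helpful addition that you spell out the degree count behind the vanishing that the paper only cites.
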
 
\begin{proof}
We note that $\tilde{H}^{V'-V}_{G}(S^0;\uZp) = 0$, from the calculations of Propositions \ref{cohptodd} and \ref{cohpt2}, and the identification in \eqref{repfr}. If there was a $G$-map $S(V) \to S(V')$, then we obtain an induced map 
$$\tilde{H}^{\bigstar}_{G}(S(V')_+;\uZp) \to \tilde{H}^{\bigstar}_{G}(S(V)_+;\uZp)$$
of $\tilde{H}^{\bigstar}_{G}(S^0;\uZp)$-modules which sends $1$ to $1$. In degree $V'$, the left hand side is $0$ by Lemma \ref{sph}, and the right hand side is generated by $1\cdot a_{V'}$. This is a contradiction. 
\end{proof} 

We may also use these techniques to deduce a proof of the ``topological Tverberg conjecture" in the prime case. This states that for integers $n \geq 2$, $d \geq 1$ and $N=(d+1)(n-1)$, and for any continuous map $f : \Delta^{N} \to \R^d$, there exists $n$-pairwise disjoint faces $\sigma_1, \cdots, \sigma_n$ of the simplex $\Delta^{N}$ such that $f(\sigma_1) \cap \cdots\cap f(\sigma_n) \neq \emptyset$. This conjecture was first posed by B\'ar\'any, Shlosman, and Sz\"{u}cs \cite{B-S-S},  who proved if  $n$ is prime. Later, {\"O}zaydin \cite{Oz87}, Sarkaria \cite{sar}, and Volvikov \cite{vol96} using different techniques extended this result to $n$ a power of some prime. In \cite{fr15}, Frick describes a counterexample  when $n$ is not a prime power and $d \geq 3n+1$.  We have in earlier work observed \cite{BG17} that the methods in the prime power case do not help in proving weaker versions (that is, with increased values of $N$) if $n$ is not a prime power.

A map $f: \Delta^{(d+1)(n-1)} \to \mathbb{R}^d$ violating the topological Tverberg conjecture gives a $\Sigma_n$-equivariant map from the $((d+1)(n-1)+1)$-fold join $\{1, 2, \cdots, n \}^{\ast ((d+1)(n-1)+1)}$ to the representation sphere $S(W^{\oplus d})$ \cite{Oz87}, where $W$ is the standard representation of the symmetric group $\Sigma_n,$ of dimension $(n-1).$ For $n=p$, restricting to the cyclic subgroup $C_p$, we get a $C_p$-equivariant map from $\{1, \cdots, p \}^{\ast((d+1)(n-1)+1)}$ to $S(\bar{\rho}^{\oplus (d+1)})$, where $\bar{\rho}$ is the reduced regular representation.

Observe that the inclusion ${EC_p}^{((d+1)(p-1))} \subseteq EC_p$ induces an isomorphism in $\tilde{H}^\ast_{C_p}(-, \uZp)$ for $\ast \leq (d+1)(p-1)-1$ and is injective for $\ast = (d+1)(p-1).$ Since this is an inclusion of free $C_p$-spaces the result also holds for $\tilde{H}^\alpha_{C_p}(-; \uZp)$ where $\alpha \in RO(C_p)$ with $\dim \alpha \leq (d+1)(p-1).$ In particular,  observe for $1 \in \tilde{H}^0_{C_p}(EC_p; \uZp),$ Proposition \ref{mod} yields 
 $$a_{\frac{(d+1)(p-1)}{2} \xi} . 1= u_\xi^{\frac{(p-1)(d+1)}{2}} y^{\frac{(p-1)(d+1)}{2}}\neq 0,$$
thus, 
\begin{myeq} \label{acteg} 
a_{\frac{(d+1)(p-1)}{2} \xi} . 1\neq 0 \in \tilde{H}^{\frac{(d+1)(p-1)}{2}\xi}_{C_p}{(EC_p^{((p-1)(d+1))}}_+; \uZp).
\end{myeq}
Using these formulas, the following theorem provides a key step towards the topological Tverberg conjecture in the prime case.
\begin{thm}\label{obs}
There does not exist any $C_p$-map $EC_p^{((p-1)(d+1))} \to S(\bar{\rho}^{\oplus (d+1)}).$
\end{thm}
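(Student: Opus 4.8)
The plan is to argue by contradiction using the module structure developed in Propositions \ref{mod} and \ref{modX}, together with the key computation \eqref{acteg}. The strategy mirrors the proof of Corollary \ref{borsul}: if a $C_p$-map $g: EC_p^{((p-1)(d+1))} \to S(\bar\rho^{\oplus(d+1)})$ existed, it would induce a map of $\tilde H^\bigstar_{C_p}(S^0;\uZp)$-modules
$$
g^\ast: \tilde H^\bigstar_{C_p}(S(\bar\rho^{\oplus(d+1)})_+;\uZp) \to \tilde H^\bigstar_{C_p}((EC_p^{((p-1)(d+1))})_+;\uZp)
$$
sending $1$ to $1$. The idea is to exhibit an element $a_V$ of $\tilde H^\bigstar_{C_p}(S^0;\uZp)$ that acts as zero on $1$ in the source (by Lemma \ref{sph}) but acts nontrivially on $1$ in the target (by \eqref{acteg}), which is the desired contradiction since $g^\ast$ is a module map fixing $1$.

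First I would identify the relevant representation. The reduced regular representation $\bar\rho$ of $C_p$ decomposes as a sum of the nontrivial real irreducibles, so $\bar\rho^{\oplus(d+1)}$ is a fixed-point-free $C_p$-representation of dimension $(p-1)(d+1)$. Setting $V = \bar\rho^{\oplus(d+1)}$, Lemma \ref{sph} gives $\tilde H^V_{C_p}(S(V)_+;\uZp)=0$; equivalently, the class $a_V \in \tilde H^V_{C_p}(S^0;\uZp)$ annihilates $1 \in \tilde H^0_{C_p}(S(V)_+;\uZp)$, since in the long exact sequence \eqref{exact} the multiplication-by-$a_V$ map $\tilde H^0_{C_p}(S^0) \to \tilde H^V_{C_p}(S^0)$ is onto (indeed an isomorphism) and so the image of $1\cdot a_V$ in $\tilde H^V_{C_p}(S(V)_+;\uZp)$ vanishes. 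Next, using the identification \eqref{repfr} I would rewrite $a_V$ up to a unit as $a_{\frac{(d+1)(p-1)}{2}\xi}$, matching the element appearing in \eqref{acteg}. Then \eqref{acteg} asserts precisely that this same class acts nontrivially on $1$ in $\tilde H^\bigstar_{C_p}((EC_p^{((p-1)(d+1))})_+;\uZp)$.

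To assemble the contradiction, I would chase $1$ around the square determined by $g^\ast$. Since $g^\ast(1)=1$ and $g^\ast$ commutes with the action of $a_V \in \tilde H^\bigstar_{C_p}(S^0;\uZp)$, we get
$$
g^\ast(a_V \cdot 1) = a_V \cdot g^\ast(1) = a_V \cdot 1 \neq 0
$$
in the target by \eqref{acteg}, whereas $a_V\cdot 1 = 0$ in the source by Lemma \ref{sph}, forcing $g^\ast(0)\neq 0$, which is absurd. This completes the argument.

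The main obstacle, and the step requiring the most care, is matching the representation-theoretic bookkeeping so that the class $a_V$ annihilating $1$ in Lemma \ref{sph} is literally the class $a_{\frac{(d+1)(p-1)}{2}\xi}$ whose nonvanishing is recorded in \eqref{acteg}. This hinges on the invertibility statement \eqref{repfr}: because the classes $a_{\xi^i}$ differ from powers of $a_\xi$ only by units $u$ coming from the equivalences $S^{\xi-\xi^i}\wedge H\uZp \simeq H\uZp$, one must verify that $a_{\bar\rho^{\oplus(d+1)}}$ equals $a_{\frac{(d+1)(p-1)}{2}\xi}$ up to such a unit, exactly as in the $p$-odd case of Lemma \ref{sph}. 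Once this identification is secured, everything else is a formal module-theoretic contradiction, and the $p=2$ case is handled identically with $\sigma$ in place of $\xi$ and $V=\dim(V)\sigma$.
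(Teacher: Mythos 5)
Your proof is correct and takes essentially the same approach as the paper: assume a map exists, note that it induces a $\tilde{H}^\bigstar_{C_p}(S^0;\uZp)$-module map sending $1$ to $1$, and derive a contradiction because $a_{\frac{(p-1)(d+1)}{2}\xi}\cdot 1$ vanishes in $\tilde{H}^\bigstar_{C_p}(S(\bar{\rho}^{\oplus(d+1)})_+;\uZp)$ by Lemma \ref{sph} while being nonzero in $\tilde{H}^\bigstar_{C_p}((EC_p^{((p-1)(d+1))})_+;\uZp)$ by \eqref{acteg}. The only difference is cosmetic: you spell out the identification, via \eqref{repfr}, of $a_{\bar{\rho}^{\oplus(d+1)}}$ with $a_{\frac{(d+1)(p-1)}{2}\xi}$ up to a unit, a step the paper leaves implicit inside the proof of Lemma \ref{sph}.
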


\begin{proof}
Suppose on contrary there is a $C_p$-map $f: EC_p^{((p-1)(d+1))} \to S(\bar{\rho}^{\oplus (d+1)}).$ Then it induces a $\tilde{H}^\bs_{C_p}(S^0)$-module map 
$$f^*: \tilde{H}^\bs_{C_p}(S(\bar{\rho}^{\oplus (d+1)})_+; \uZp) \to \tilde{H}^\bs_{C_p}({EC_p^{((p-1)(d+1))}}_+; \uZp)$$ 
So, using module structure, we have $$f^*(a_{\frac{(p-1)(d+1)}{2}\xi}.1)=a_{\frac{(d+1)(p-1)}{2}\xi}f^*(1).$$  Lemma \ref{sph} implies that $a_{\frac{(p-1)(d+1)}{2} \xi}.1=0.$ Therefore, this contradicts \eqref{acteg}.
\end{proof}

Finally, note that $EC_p^{(p-1)(d+1)}$ is a free $C_p$-space of dimension $(p-1)(d+1)$ and $\{1, \cdots, p \}^{\ast (p-1)(d+1)+1}$ has connectivity $(p-1)(d+1).$ Therefore, by $C_p$-equivariant obstruction theory we have a $C_p$-map from $EC_p^{(p-1)(d+1)} \to \{1, \cdots, p \}^{\ast (p-1)(d+1)+1}.$ Thus a $C_p$-map from $\{1, \cdots, p \}^{\ast (p-1)(d+1)+1} \to S(\bar{\rho}^{\oplus d+1})$ induces a $C_p$-map $EC_p^{((p-1)(d+1))} \to S(\bar{\rho}^{\oplus (d+1)}),$ contradicting Theorem \ref{obs}. This contradiction implies the topological Tverberg conjecture in the odd prime case. In fact, an analogous argument may be written also in the case $p=2$, but then the calculation is equivalent to Corollary \ref{borsul}.

 \subsection{Invariants of finite free $C_p$-spaces}\label{findim}
Let $X$ be a finite dimensional free $C_p$-space. Proposition \ref{addstr} implies that $\tilde{H}^\alpha_G(X_+; \uZp)=0$ for $\dim\alpha$ sufficiently large. Since the classes $\kappa_\xi$ and $a_\xi$ raise the total degree (for an element of $RO(G)$ we refer to the total degree as the dimension) of $\tilde{H}^\bigstar_G(X_+; \uZp)$ by one or two respectively, so, there is minimum degree $n(X)$ of $\kappa_\xi^\epsilon a_\xi^j$, $\epsilon \in \{ 0,1\}$ which acts trivially. That is, 
$$n(X) = \begin{cases}\min \{ 2j+\epsilon \mid \kappa_\xi^\epsilon a_\xi^j \text{ acts trivially on }\tilde{H}^\bigstar_G(X_+; \uZp) \}, & \text{ for $p$ odd} \\ 
\min \{ j \mid a_\sigma^j \text{ acts trivially on }\tilde{H}^\bigstar_G(X_+; \uZZ) \}, & \text{ for $p=2$.}
 \end{cases}$$
The number $n(X)$ behaves like an index of a $G$-space in the sense that if there is a $G$-map $X \to Y$, $n(Y) \geq n(X)$. For otherwise, the map $\tilde{H}^\bigstar_G(Y_+;\uZp) \to  \tilde{H}^\bigstar_G(X_+;\uZp)$ would not be a map of $\tilde{H}^\bigstar_G (S^0;\uZp)$-modules. 
  
We now relate $n(X)$ to the Fadell-Husenni index (\cite{FH}) for a $G$-space $X$. For a $G$-space $X$ and ring $R$, the Fadell-Husenni index of $X$ is defined to be the kernel ideal of the map $p: EG \times_G X \to BG$ in cohomology induced by the $G$-equivariant projection $X \to pt:$ 
 $$Index_G(X; R) = Ker(p^\ast: H^\ast(BG; R) \to H^\ast(EG \times_G X; R)).$$ 
If $G=C_p$ and $R=\Z/p$, note that the cohomology of $BG$ is free of rank $1$ in each degree. We write $i(X)$ to be the first integer $i$  where the degree $i$ part of $Index_G(X;R)$ is non-trivial. 
 \begin{thm}\label{F-H}
$n(X)=i(X)$. 
\end{thm}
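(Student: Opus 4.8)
The plan is to show both inequalities $n(X) \le i(X)$ and $i(X) \le n(X)$ by translating each side into a statement about the action of the class $a_\xi$ (or $a_\sigma$) on the free part of the cohomology, using the fact that both indices measure exactly when powers of $a_\xi$ start to annihilate the fundamental class. The essential bridge is that $EG \times_G X$ is the Borel construction, whose ordinary cohomology $H^\ast(EG \times_G X; \Z/p)$ is naturally identified with the integer-graded piece of the $RO(G)$-graded cohomology $\tilde{H}^\bigstar_G((EG \times X)_+; \uZp)$ via the projection to orbits, and that the map $p^\ast$ on Borel cohomology is, up to the periodicity isomorphisms of Proposition \ref{addstr}, the multiplication-by-$a_\xi$ map on the $\tilde{H}^\bigstar_G(S^0;\uZp)$-module $\tilde{H}^\bigstar_G(X_+;\uZp)$.

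First I would make precise the dictionary between the two pictures. The cohomology $H^\ast(BG;\Z/p)$ is $\Z/p[x,y]/(x^2)$ for $p$ odd (respectively $\Z/p[x]$ for $p=2$), and under the equivalence $\tilde{H}^\bigstar_G({EG}_+;\uZp) \cong H^\ast(BG;\Z/p)\otimes \Z/p[u_\xi^\pm]$ of Proposition \ref{addstr}, the element $y = \beta(x)$ corresponds to $a_\xi u_\xi^{-1}$ (this is exactly the identification $a_\xi \mapsto u_\xi y$ fixed in Section 3.2), and $x$ corresponds to $\kappa_\xi u_\xi^{-1}$. Since $X \to X/G$ is the quotient and $EG \times_G X \simeq X/G$ when $X$ is free — more relevantly, for general $X$ the Borel cohomology is computed by the spectral sequence of Proposition \ref{HFPSS} applied to $EG \times X$ — the pullback $p^\ast$ sends the polynomial generators of $H^\ast(BG;\Z/p)$ to their images in $\tilde{H}^\ast_G(X_+;\uZp)$, which by Proposition \ref{modX} are governed by $\tau$ and $\beta(\tau)$. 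Concretely, $p^\ast(y) = \beta(\tau)$ and $p^\ast(x) = \tau$ up to units. The kernel of $p^\ast$ therefore records the smallest power of $y$ (equivalently of $a_\xi$, after clearing the invertible $u_\xi$) for which $y^j$, or $x y^j$, maps to zero in $H^\ast(X/G;\Z/p)$.

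The second step is to observe that the condition defining $n(X)$ — that $\kappa_\xi^\epsilon a_\xi^j$ acts trivially on $\tilde{H}^\bigstar_G(X_+;\uZp)$ — is, after using that $u_\xi$ acts invertibly and that the module is generated over $\Z/p[u_\xi^\pm]$ by $H^\ast(X/G;\Z/p)$, precisely the condition that $\tau^\epsilon \beta(\tau)^j = 0$ in $H^\ast(X/G;\Z/p)$ (for $p$ odd; for $p=2$ the single condition is $\tau^j = 0$, since $a_\sigma$ acts as $\tau u_\sigma$). This is the very same vanishing that defines $i(X)$ through $p^\ast$, because the kernel of $p^\ast$ in degree $i$ is nonzero exactly when the degree-$i$ generator of $H^\ast(BG;\Z/p)$ — a monomial in $x,y$ of total degree $i$ — pulls back to zero. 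Matching the total degree: the class $a_\xi$ has total degree $2$ and corresponds to $y$ in degree $2$, while $\kappa_\xi$ has total degree $1$ and corresponds to $x$ in degree $1$, so $\kappa_\xi^\epsilon a_\xi^j$ has total degree $2j+\epsilon$ matching the Borel-cohomology degree $2j+\epsilon$ of $x^\epsilon y^j$. Thus the minimal total degree at which some $\kappa_\xi^\epsilon a_\xi^j$ acts trivially equals the minimal integer degree at which $Index_G(X;\Z/p)$ is nonzero.

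The step I expect to be the main obstacle is the careful identification of $p^\ast$ with the module multiplication map, in full generality for an arbitrary (not necessarily free) finite-dimensional $X$. When $X$ has points with nontrivial stabilizer, $\tilde{H}^\bigstar_G(X_+;\uZp)$ is no longer given by the clean formula of Proposition \ref{addstr}, so I cannot directly invoke $\tau$; instead I must work with the Borel construction $EG \times X$, which is always free, apply Propositions \ref{addstr} and \ref{modX} to it, and then relate the action on $\tilde{H}^\bigstar_G((EG \times X)_+;\uZp)$ back to the action on $\tilde{H}^\bigstar_G(X_+;\uZp)$ via the projection $EG \times X \to X$. The technical care lies in checking that this projection induces an isomorphism of $\tilde{H}^\bigstar_G(S^0;\uZp)$-modules in the relevant range, so that triviality of the $a_\xi$-action is detected identically on both sides; once this comparison is in place, the equality $n(X)=i(X)$ follows by matching degrees as above.
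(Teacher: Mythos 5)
Your proposal is correct and takes essentially the same route as the paper: both arguments identify $\tilde{H}^\bigstar_G(X_+;\uZp)$ with $H^\ast(X/G;\Z/p)\otimes \Z/p[u_\xi^\pm]$ via Proposition \ref{addstr}, use the module structure (Propositions \ref{mod} and \ref{modX}, which give $q^\ast(\kappa_\xi)=\tau u_\xi=p^\ast(x)u_\xi$ and $q^\ast(a_\xi)=\beta(\tau)u_\xi=p^\ast(y)u_\xi$) to translate triviality of the $\kappa_\xi^{\epsilon}a_\xi^{j}$-action into the vanishing of $p^\ast(x^{\epsilon}y^{j})$, and conclude by matching total degrees, using that $u_\xi$ is invertible of total degree zero and that $H^\ast(BG;\Z/p)$ has rank one in each degree. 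Your closing concern about non-free $X$ is unnecessary: the theorem, and indeed the finiteness of $n(X)$ itself as defined in Section 4.2, applies only to finite-dimensional free $C_p$-spaces (a space with a fixed point retracts onto $S^0$, so no power of $a_\xi$ acts trivially), so no comparison through the Borel construction $EG\times X$ is required.
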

\begin{proof}
For a free, finite dimensional $G$-space $X$ we have up to $G$-homotopy a unique map $\phi : X\to EG$. Also the Borel construction $X\times_G EG \simeq X/G$. Now consider the commutative triangle
 $$\xymatrix{\tilde{H}^\bigstar_G(EG_+, \uZp) \ar[rr]^{\phi^\ast} & & \tilde{H}^\bigstar_G(X_+, \uZp) \\ 
& \tilde{H}^\bigstar_G(S^0; \uZp)\ar[ur] \ar[ul]}$$ 
Since, both the space $X$ and $EG$ are free $G$-spaces, therefore, using Proposition \ref{addstr} the horizontal map turns out to 
$$p^\ast \otimes Id:H^\ast(BG)\otimes \bigotimes_{\xi \in \hat{G} \setminus \{1 \}} \Z/p [u_\xi^\pm] \to H^\ast(X/G)\otimes \bigotimes_{\xi \in \hat{G} \setminus \{1 \}} \Z/p [u_\xi^\pm].$$
Therefore, the result follows immediately from the Propostion \ref{mod}, and the fact that the elements $u_\xi$ have total degree $0$.
\end{proof}

\subsection{Freeness theorem}\label{free}
We use the calculations from Section \ref{coh} to prove a freeness result for certain $G$-cell complexes formed out of representations. One defines such a complex as a $Rep(G)$-complex as below. 
\begin{defn}
A $Rep(G)$-cell complex $X$ is a $G$-space with a filtration $\{ X^{(n)}\}_{n \geq 0}$ of subspaces such that \\
a) $X^{(0)}$ is a finite union of disjoint copies of $G/G$. \\
b) For each $n,$ $X^{(n+1)}$ is built up form $X^{(n)}$ by attaching cells of the form $D(V)$ along the boundary $S(V)$ with $\dim(V)=n+1$.  \\
c) $X = \cup_{n \geq 0} X^{(n)}$ has the colimit topology.
\end{defn}

Not all finite $G$-spaces are representable as $Rep(G)$-complexes where cells are attached sequentially in increasing dimension. Note that for $G$-spheres one may have equivariant maps (not null-homotopic) which increase the total dimension. For example one has the map 
$$a_\xi : S^V \to S^{V\oplus \xi}$$
which is non-trivial in cohomology if $V$ is fixed point free (Proposition \ref{addstr}). In fact the mapping cone of $a_\xi : S^0 \to S^\xi$ is $\Sigma S(\xi)_+$, which is easily verified to have non-free cohomology. In fact $\Sigma 1\in \tilde{H}^1_G(\Sigma S(\xi)_+;\uZp)$ is not divisible by $a_\xi$ or $u_\xi$ but satisfies $a_\xi \cdot \Sigma 1 =0$. One easily verifies analogously that the cohomologies $\tilde{H}^\bigstar_G(S(k\xi)_+;\uZp)$ are all non-free.

There are plenty of examples of $Rep(G)$-complexes. The one point compactification $S^V$ written as union of $D(V)$ identifying $S(V)$ to a point is one. Observe from \cite[Section 8.1]{BG19} that $\C P(V)$, $Gr_k(V)$ are also $Rep(G)$-complexes for a unitary $G$-representation $V$.

Our freeness result holds only for $Rep(G)$-complexes such that there are no cells in consecutive dimensions. In this context we make the following definition. 
\begin{defn}
A $Rep(G)$ complex is said to be sparse if it does not have cells in consecutive dimensions. 
\end{defn}

An example of a sparse $Rep(G)$-complex is a complex with only even dimensional cells, but one may observe easily that these are not all. The freeness result we prove here generalizes the analogous result for $p=2$ in \cite{Kr10}. 

\begin{rmk}\label{Clover-countex} 
The sparseness condition is not present in the case $p=2$, however, it is necessary for a freeness theorem in the $p$ odd case. This is due to the following example of Clover May of a $Rep(C_p)$-complex with non-free cohomology. We have  a $C_p$-map $\lambda : S^\xi \to S^2=S^\xi/C_p$. Let $Y$ be the mapping cone of $\lambda$, so that it is a representation complex. The map $\lambda$ induces an isomorphism on $H^2_{C_p}$, so that $\tilde{H}^n_{C_p}(Y)=0$ for all $n$. Now one may observe that in $\tilde{H}^\bigstar_{C_p}(S^0)$, for every $k$ there is a $n$ such that  $\tilde{H}^{k\xi -n}_{C_p}(S^0)\neq 0$. It follows that the cohomology of $Y$ cannot have a summand of the type $\tilde{H}^{\bigstar - V}_{C_p}(S^0)$ and hence, cannot be free. 
\end{rmk}

We now build up the techniques leading to a proof of the freeness result. In view of the equivalence $H\uZp \wedge S^{\xi^i} \simeq H\uZp \wedge S^{\xi^j}$ for $p\nmid i, j$, we may assume that all the representations $V$ in the cell complex decomposition contain only the trivial representations, and the representation $\xi$. 

We begin the proof with a localization theorem, which identifies the cohomology of finite $G$-spaces after we invert the element $a_\xi$. This is the $p$ odd version of \cite[Lemma 4.3]{CM18} with a very similar proof. In the rest of the section $p$ is an odd prime, and as a notation we write $\tilde{H}^\bigstar_G(Y)$ for $\tilde{H}^\bigstar_G(Y;\uZp)$.
\begin{prop}\label{loc}
For a finite $C_p$-space $X,$ we have the following equivalence given by the $a_\xi$-localization:
$$a_\xi^{-1}\tilde{H}^\bigstar_{C_p}(X_+) \cong a_\xi^{-1} \tilde{H}^\bigstar_{C_p}(X^{C_p}_+)\cong H^\ast(X^{C_p}; \Z/p)\otimes a_\xi^{-1}H^\bigstar_{C_p}(S^0).$$
\end{prop}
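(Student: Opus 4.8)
The plan is to prove this by a standard localization/isotropy-separation argument, exploiting the fact that inverting $a_\xi$ precisely isolates the fixed-point contribution. The conceptual heart of the matter is the cofibre sequence $S(\infty\xi)_+ \to S^0 \to \widetilde{EG}$, where $\widetilde{EG} \simeq \varinjlim_n S^{n\xi}$ is the space whose only nontrivial fixed points are the $G$-fixed ones. Smashing with $X_+$ and recalling that $a_\xi$ acts invertibly on anything smashed with $\widetilde{EG}$ (as observed in Section \ref{coh} when computing $\pi_\bigstar \widetilde{EG}\wedge H\uZp$), one expects the $a_\xi$-local cohomology of $X_+$ to agree with the cohomology of $\widetilde{EG}\wedge X_+$.

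First I would make precise the equivalence $a_\xi^{-1}\tilde{H}^\bigstar_{C_p}(X_+) \cong \tilde{H}^\bigstar_{C_p}(\widetilde{EG}\wedge X_+)$. Since $X$ is finite, inverting $a_\xi$ commutes with the relevant (co)limits, and the localization map $X_+ \to \widetilde{EG}\wedge X_+$ becomes an equivalence after inverting $a_\xi$ because the cofibre $S(\infty\xi)_+ \wedge X_+$ has $a_\xi$ acting nilpotently on a free space (its cohomology is $a_\xi$-torsion by the explicit formula in Proposition \ref{addstr}, where no positive power of $a_\xi$ survives). Next, I would identify $\widetilde{EG}\wedge X_+$ with $\widetilde{EG}\wedge X^{C_p}_+$: the inclusion $X^{C_p}\hookrightarrow X$ induces an equivalence after smashing with $\widetilde{EG}$, since the cofibre is built from free cells $G/e_+$, and $\widetilde{EG}\wedge G/e_+ \simeq \ast$ (the fixed points of $\widetilde{EG}$ are $S^0$, while its free part is contractible). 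This gives the first isomorphism $a_\xi^{-1}\tilde{H}^\bigstar_{C_p}(X_+) \cong a_\xi^{-1}\tilde{H}^\bigstar_{C_p}(X^{C_p}_+)$.

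For the second isomorphism I would use that $X^{C_p}$ carries the trivial $G$-action, so $\tilde{H}^\bigstar_{C_p}(X^{C_p}_+) \cong H^\ast(X^{C_p};\Z/p)\otimes \tilde{H}^\bigstar_{C_p}(S^0)$ as a module (the cells of $X^{C_p}$ are of the form $G/G\times D^k$, contributing copies of $\tilde{H}^{\bigstar-k}_{C_p}(S^0)$, and sparseness or finiteness ensures there are no extension problems once we invert $a_\xi$). Inverting $a_\xi$ then distributes across the tensor product, yielding $H^\ast(X^{C_p};\Z/p)\otimes a_\xi^{-1}\tilde{H}^\bigstar_{C_p}(S^0)$, which is exactly the claimed right-hand side (after abbreviating $\tilde{H}^\bigstar_{C_p}(S^0)$ as $H^\bigstar_{C_p}(S^0)$ per the section's notation).

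I expect the main obstacle to be justifying that $a_\xi^{-1}\tilde{H}^\bigstar_{C_p}(X^{C_p}_+)$ splits as a genuine tensor product rather than merely admitting an associated graded of that form — that is, controlling the possible differentials or extensions in the cellular filtration of $X^{C_p}$ after localization. The cleanest route is to observe that $a_\xi^{-1}\tilde{H}^\bigstar_{C_p}(S^0)$ is, by Proposition \ref{cohptodd}, concentrated in the top cone and is a free $\Z/p[a_\xi^\pm,u_\xi^\pm]$-module on $\{1,\kappa_\xi\}$; because this localized coefficient ring has invertible $a_\xi$ and $u_\xi$, the integer-graded attaching maps of $X^{C_p}$ (which land in the cohomology of $S^0$ in total degree shifted by one) must vanish after localization, collapsing the spectral sequence and splitting all extensions. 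This is the one place where the argument genuinely uses the ring structure computed earlier, and I would cite \cite[Lemma 4.3]{CM18} for the parallel $p=2$ argument whose structure transfers directly.
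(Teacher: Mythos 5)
Your strategy (isotropy separation: invert $a_\xi$, pass to $\widetilde{EG}$, kill the free part) is genuinely different from the paper's proof, which simply observes that $X\mapsto a_\xi^{-1}\tilde{H}^\bigstar_{C_p}(X_+)$ and $X\mapsto a_\xi^{-1}\tilde{H}^\bigstar_{C_p}(X^{C_p}_+)$ are both cohomology theories on finite $C_p$-CW complexes (exactness of localization; fixed points preserve cofibrations) that agree on the two orbits, and then invokes the second isomorphism for trivial actions. Your route can be made to work, but as written its key step is wrong. You claim the cofibre term $EG_+\wedge X_+$ has $a_\xi$-torsion cohomology ``by the explicit formula in Proposition \ref{addstr}.'' This is false exactly when $X^{C_p}\neq\emptyset$, which is the only interesting case. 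By Propositions \ref{addstr} and \ref{modX}, $\tilde{H}^\bigstar_{C_p}((EG\times X)_+)\cong H^\ast((EG\times X)/C_p;\Z/p)\otimes \Z/p[u_\xi^\pm]$ with $a_\xi$ acting as multiplication by $\beta(\tau)u_\xi$, and on the Borel construction $(EG\times X)/C_p$ the class $\beta(\tau)$ is pulled back from the polynomial generator $y\in H^2(BC_p;\Z/p)$; it is nilpotent only when the action on $X$ is free. Concretely, for $X=\mathrm{pt}$ your cofibre term is $EG_+$ itself, where $a_\xi$ acts injectively as $yu_\xi$ on $\Z/p[x,y,u_\xi^\pm]/(x^2)$, so $a_\xi^{-1}\tilde{H}^\bigstar_{C_p}(EG_+)\cong \Z/p[x,y^\pm,u_\xi^\pm]/(x^2)\neq 0$ (this is Tate cohomology). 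Hence the long exact sequence of $EG_+\wedge X_+\to X_+\to \widetilde{EG}\wedge X_+$ does not collapse after inverting $a_\xi$, and your first isomorphism does not follow as argued. This is also precisely why finiteness is in the hypothesis: for $X=EG$ the proposition itself fails.

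The root of the problem is a conflation of smash and function constructions. What is genuinely $a_\xi$-power torsion is $\pi_\bigstar\bigl(EG_+\wedge F(X_+,H\uZp)\bigr)$, with the smash \emph{outside} the function spectrum: $a_\xi^n=a_{n\xi}$ becomes null-homotopic after smashing with $S(n\xi)_+$ (the bundle $n\xi$ has a nowhere-zero section over $S(n\xi)$), so the homotopy of $S(n\xi)_+\wedge Y$ is killed by $a_\xi^n$ for \emph{any} $Y$, and $EG_+\simeq \varinjlim_n S(n\xi)_+$. That spectrum is not $F(EG_+\wedge X_+,H\uZp)$, whose homotopy is the Borel cohomology you invoked; the difference between the two is exactly the completion-versus-localization distinction encoded in the Tate square \eqref{tate}. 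Likewise, the $a_\xi$-localized cohomology of $X_+$ is $\pi_{-\bigstar}\bigl(\widetilde{EG}\wedge F(X_+,H\uZp)\bigr)$, not the cohomology of the infinite complex $\widetilde{EG}\wedge X_+$ (which is an inverse limit). With the correct objects your outline is repairable: $\widetilde{EG}\wedge F(X/X^{C_p},H\uZp)\simeq \ast$ because $X/X^{C_p}$ is a \emph{finite} complex of free cells, $F(G/e_+\wedge S^n,H\uZp)\simeq \Sigma^{-n}G/e_+\wedge H\uZp$ by self-duality of orbits, and $\widetilde{EG}\wedge G/e_+\simeq\ast$; this is where finiteness enters. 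Two smaller corrections: $a_\xi^{-1}\tilde{H}^\bigstar_{C_p}(S^0)\cong \Z/p[u_\xi,a_\xi^\pm,\kappa_\xi]/(\kappa_\xi^2)$ does \emph{not} have $u_\xi$ invertible; and the second isomorphism needs neither sparseness nor any vanishing of attaching maps, since for a trivial action the cellular cochain complex is a complex of $\Z/p$-vector spaces tensored over the field $\Z/p$ with $\tilde{H}^\bigstar_{C_p}(S^0)$, so taking cohomology commutes with the tensor product.
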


\begin{proof}
Consider the inclusion $X^{C_p} \to X$, which induces the map 
$$a_\xi^{-1} \tilde{H}^\bigstar_{C_p}(X_+) \to a_\xi^{-1}\tilde{H}^\bigstar_{C_p}(X^{C_p}_+),$$
between cohomology theories on locally finite, finite $C_p$-CW complexes. The left hand side is a cohomology theory because the localization functor is exact. The fixed points functor preserves cofibre sequences and so, $a_\xi^{-1}\tilde{H}^\bigstar_{C_p}((-)^{C_p}_+)$ is a cohomology theory. They agree on $G$-orbits, so the first isomorphism follows. The second isomorphism is clear as $X^G$ is a trivial $G$-space.
\end{proof}


Let $X$ be a $Rep(G)$-complex not necessarily sparse. Also, assume that $\tilde{H}^\bigstar_G(X^{(n-1)}_+)$ is a free $\tilde{H}^\bigstar_G(S^0)$-module with generators $\omega_1, \cdots, \omega_s$ in the $RO(G)$-degree $W_1, \cdots, W_s$ respectively and $X^{(n)}$ is formed by attaching exactly one cell $D(V)$ to $X^{(n-1)}$. We have the following long exact sequence 
$$\cdots \tilde{H}^\bigstar_G(S^V) \to \tilde{H}^\bigstar_G(X^{(n)}_+) \to \tilde{H}^\bigstar_G(X^{(n-1)}_+) \stackrel{d}{\to} \tilde{H}^{\bigstar+1}_G(S^V) \cdots $$
We denote by $\nu$ the generator in $\tH^V_G(S^V)$. The following lemma gives a method to prove freeness in the case there are cells in $\dim(V)-1$ provided some fixed point criteria are satisfied. 
\begin{prop}\label{topcone}
Assume that  $\dim(W_1)= \dim(V)-1$ and that $\dim W_1^{C_p}< \dim V^{C_p}$. After a change of basis among $\omega_i$ of dimension $\dim(V)-1$, either one of the following holds \\
1) $d(\omega_1)=0$.\\
2) $\omega_1 = \nu -1 $, $d(\omega_1)=\nu$ and $d(\omega_i)=0$ for all $i\geq 2$.  
\end{prop}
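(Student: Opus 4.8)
The plan is to use that the connecting homomorphism $d$ is a homomorphism of $\tilde{H}^\bigstar_G(S^0)$-modules. Indeed, applying the function spectrum $F(-,H\uZp)$ to the cofibre sequence $X^{(n-1)}_+ \to X^{(n)}_+ \to S^V$ produces a fibre sequence of $H\uZp$-modules, whose associated long exact sequence of homotopy is one of $\tilde{H}^\bigstar_G(S^0)$-modules. Since the suspension isomorphism gives $\tilde{H}^\bigstar_G(S^V) \cong \tilde{H}^{\bigstar - V}_G(S^0)\cdot \nu$, a free module of rank one on the fundamental class $\nu$, the map $d$ is determined by elements $c_i \in \tilde{H}^\bigstar_G(S^0)$ with $d(\omega_i) = c_i\,\nu$. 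First I would record that $c_i$ lives in $\tilde{H}^{W_i + 1 - V}_G(S^0)$, and that for the generators $\omega_i$ of dimension $\dim(V)-1$ this group has total degree $\dim W_i + 1 - \dim V = 0$.

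The heart of the argument is then to pin these $c_i$ down using Proposition \ref{cohptodd}. Having normalised all representations to contain only trivial summands and copies of $\xi$, the $RO(G)$-degree of $c_i$ has the form $m + n\xi$ with $m + 2n = 0$; moreover $m = \dim(W_i + 1 - V)^{C_p} = \dim W_i^{C_p} + 1 - \dim V^{C_p}$, which for $\omega_1$ is $\le 0$ by the hypothesis $\dim W_1^{C_p} < \dim V^{C_p}$. Scanning Proposition \ref{cohptodd}, the nonzero classes of total degree $0$ in the top cone $\Z/p[a_\xi, u_\xi, \kappa_\xi]/(\kappa_\xi^2)$ are precisely the powers $u_\xi^n$, $n \ge 0$, lying in degree $n(\xi-2)$ with fixed-point degree $-2n \le 0$; every total-degree-zero class in the bottom cone, namely $\Sigma^{-1}\kappa_\xi u_\xi^{-j}a_\xi^{-1}$ for $j>0$, has strictly positive fixed-point degree $2j$. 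Thus in the region of non-positive fixed-point degree the only possibilities are the $u_\xi$-tower, and I conclude $c_1 = \lambda_1 u_\xi^{n_1}$ with $\lambda_1 \in \Z/p$ and $n_1 = (\dim V^{C_p} - \dim W_1^{C_p} - 1)/2 \ge 0$.

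The conclusion now follows by a change of basis. If $\lambda_1 = 0$ then $d(\omega_1)=0$ and we are in case (1). Otherwise the same analysis applies to the other dimension-$(\dim V-1)$ generators lying in non-positive fixed-point degree, so each relevant $c_i$ is a scalar multiple of $u_\xi^{n_i}$. I would select the generator realising the minimal exponent (equivalently, the largest fixed-point dimension), relabel it $\omega_1$ and rescale so that $d(\omega_1) = u_\xi^{n_1}\nu$; when the fixed-point dimensions are adjacent, $\dim W_1^{C_p} = \dim V^{C_p}-1$, this minimal exponent is $0$ and $d(\omega_1)=\nu$. For $i \ge 2$ one then has $d(\omega_i) = \lambda_i u_\xi^{n_i - n_1}\,d(\omega_1)$ with $n_i \ge n_1$, and a direct degree check shows $u_\xi^{n_i - n_1}\omega_1$ sits in the same $RO(G)$-degree $W_i$ as $\omega_i$; replacing $\omega_i$ by $\omega_i - \lambda_i u_\xi^{n_i - n_1}\omega_1$ produces the new basis with $d(\omega_i) = 0$, which is case (2).

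The step I expect to be the main obstacle is the identification in the second paragraph: showing that the total-degree-zero part of $\tilde{H}^\bigstar_G(S^0)$ in non-positive fixed-point degree is exactly the tower of powers of $u_\xi$. This is precisely where the hypothesis $\dim W_1^{C_p} < \dim V^{C_p}$ is indispensable, as it forces $c_i$ out of the bottom cone; without it the image $d(\omega_i)$ could be a nonzero bottom-cone class such as $\Sigma^{-1}\kappa_\xi u_\xi^{-j}a_\xi^{-1}$, which is not a multiple of $d(\omega_1)$ and could not be cleared. A secondary point demanding care is the homogeneity of the subtractions in the change of basis, which relies on the normalisation of the representations to trivial summands and copies of $\xi$.
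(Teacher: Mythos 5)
Your opening steps---writing $d(\omega_i)=c_i\nu$ with $c_i\in\tilde{H}^{W_i+1-V}_G(S^0)$, the observation that the total-degree-zero part of $\tilde{H}^\bigstar_G(S^0)$ in non-positive fixed-point degree is exactly the tower $\{u_\xi^n\}$, and the change of basis by minimal $u_\xi$-exponent---coincide with the first half of the paper's proof. But your argument stops exactly where the real work begins, and as written it does not prove the proposition. What you establish is: after a change of basis, $d(\omega_1)=\lambda\, u_\xi^{n_1}\nu$ for some $n_1\geq 0$. Conclusion 2) asserts $d(\omega_1)=\nu$ on the nose, i.e.\ $n_1=0$; you obtain this only in the special case $\dim W_1^{C_p}=\dim V^{C_p}-1$, whereas the hypothesis allows $\dim V^{C_p}-\dim W_1^{C_p}=2n_1+1$ for any $n_1\geq 0$. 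The outcome $d(\omega_1)=u_\xi^{n_1}\nu$ with $n_1>0$ is neither conclusion 1) nor conclusion 2), so it must be \emph{excluded}, and no amount of degree bookkeeping in $\tilde{H}^\bigstar_G(S^0)$ can do this. The paper excludes it using finiteness of the space via $a_\xi$-localization (Proposition \ref{loc}): assuming $d(\omega_1)=u_\xi^{k}\nu$ with $k>0$, one first shows the boundary $\tilde{H}^{V-1}_G(S^{W_1})\to\tilde{H}^{V}_G(S^V)$ vanishes (its source is generated by a class $\Sigma^{-1}\frac{\kappa_\xi}{a_\xi u_\xi^t}\omega_1$ which is again $a_\xi$-divisible, the divided class maps into $\tilde{H}^{V-\xi}_G(S^V)=0$, and module-linearity of $d$ then forces the boundary to be zero), hence $q^\ast(\nu)\neq 0$; for degree reasons $q^\ast(a_\xi^s\nu)\neq 0$ for all $s$, so $q^\ast(\nu)$ survives $a_\xi$-localization; yet $u_\xi^k q^\ast(\nu)=q^\ast(d(\omega_1))=0$ by exactness. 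Thus $q^\ast(\nu)$ would be a nonzero $u_\xi$-torsion class in $a_\xi^{-1}\tilde{H}^\bigstar_G(X_+)\cong H^\ast(X^{C_p};\Z/p)\otimes a_\xi^{-1}\tilde{H}^\bigstar_G(S^0)$, which is impossible for a finite complex. This localization step is the heart of the proposition and is entirely absent from your proposal.

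A secondary gap: conclusion 2) requires $d(\omega_i)=0$ for \emph{all} $i\geq 2$, including generators in other dimensions and dimension-$(\dim V-1)$ generators with $\dim W_i^{C_p}>\dim V^{C_p}$, whose differentials are bottom-cone classes. Your closing claim that such a class ``is not a multiple of $d(\omega_1)$ and could not be cleared'' is the opposite of what happens: since $d$ is $\tilde{H}^\bigstar_G(S^0)$-linear and the bottom cone is infinitely $u_\xi$-divisible, one has for instance $\Sigma^{-1}\frac{\kappa_\xi}{a_\xi^{j}u_\xi^{l}}\nu=d\bigl(\Sigma^{-1}\frac{\kappa_\xi}{a_\xi^{j}u_\xi^{l+k}}\omega_1\bigr)$, and similarly for the classes without $\kappa_\xi$; so all such differentials are cleared by adding these (bottom-cone) multiples of $\omega_1$ to the corresponding generators. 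This is precisely how the paper reduces to a single nontrivial differential before running the localization argument.
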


\begin{proof} 
In order to prove the Lemma, we assume $d(\omega_1)\neq 0$ and then prove the conclusion in 2). If $\dim(W_1) = \dim(V) - 1$, we must have $W_1 + 1 - V$ is of the form $k(\xi -2)$ for $k>0$, so that $d(\omega_1)=u_\xi^k \nu$ up to units. 

After rearranging $\omega_i$ if necessary we assume that $\omega_1$ is the one with the least value of $-|(W_i +1-V)^{C_p}|$ (this equals $2k$ where $W_i +1 -V = k(\xi -2)$)  among $\{\omega_i \mid \dim(\omega_i)=\dim(\nu)-1,~ \dim\omega_1^{C_p}< \dim \nu^{C_p}, ~ d(\omega_i)\neq 0\}.$ Since $\omega_1$ has the least value of $k$ we have $d(u_\xi^t\omega_1) = d(\omega_i)$ for some $t$, as $\omega_i$ varies over the set above. We now change $\omega_i$ to $\omega_i - u_\xi^t \omega_1$ to assume $d(\omega_i)=0$. 

For the other $\omega_j$ we have $d(\omega_j)=0$ or 
$$d(\omega_j)= \Sigma^{-1}\frac{1}{a_\xi^j u_\xi^l} \nu = d( \Sigma^{-1}\frac{1}{a_\xi^j u_\xi^{l+k}} \omega_1)$$
or,  
$$d(\omega_j)= \Sigma^{-1}\frac{\kappa_\xi}{a_\xi^j u_\xi^l} \nu = d( \Sigma^{-1}\frac{\kappa_\xi}{a_\xi^j u_\xi^{l+k}} \omega_1)$$
In each case we may add a multiple of $\omega_1$ to $\omega_j$ to ensure $d(\omega_j)=0$. Therefore, $\omega_1$ is the only class with a non-trivial differential. We now are reduced to the simpler long exact sequence  
\begin{myeq}\label{cellext}
\cdots \tilde{H}^\bigstar_G(S^V) \stackrel{q^\ast}{\to} \tilde{H}^\bigstar_G(X_+) \stackrel{i^\ast}{\to} \tilde{H}^\bigstar_G(S^W) \stackrel{d}{\to} \tilde{H}^{\bigstar+1}_G(S^V) \cdots.
\end{myeq}
putting $\omega=\omega_1$ and suppressing the other $\omega_i$ with trivial differential. We consider \eqref{cellext} for $\bigstar = V$, assume that the conclusion of the lemma does not hold, and show that the boundary map $d : \tilde{H}^{V-1}_G(S^W) \to \tilde{H}^V_G(S^V)$ is trivial. The group $\tilde{H}^{V-1}_G(S^W)$ is generated by $\Sigma^{-1}\frac{\kappa_\xi}{a_\xi u_\xi^t}\omega$ for some $t$. If $d$ is non-trivial on this class, we must have  
$$d(\Sigma^{-1}\frac{\kappa_\xi}{a_\xi u_\xi^t}\omega) = \nu$$ 
up to an unit in $\Z/p.$ Note that this class is again divisible by $a_\xi$, so $d(\Sigma^{-1}\frac{\kappa_\xi}{a_\xi^2 u_\xi^t}\omega) \in \tilde{H}^{V -\xi}_G(S^V)$, which is zero. Now the module structure implies 
$$0= a_\xi d(\Sigma^{-1}\frac{\kappa_\xi}{a_\xi^2 u_\xi^t} \omega) = d(\Sigma^{-1}\frac{\kappa_\xi}{a_\xi u_\xi^t} \omega) = \nu$$ 
which gives a contradiction. Hence, $q^\ast(\nu)\neq 0$. For degree reasons, also $q^\ast(a_\xi^s \nu)\neq 0$. Therefore, the class $q^\ast(\nu)$ would survive to the $a_\xi$-localization and have $u_\xi$ torsion, which cannot happen in the cohomology of a finite space by Lemma \ref{loc}.
\end{proof}

 We start at the crucial step in the proof for a cohomological two cell complex: This is defined to be a $G$-space $X$ for which  there is a  cofibre sequence
$$\Sigma^{-V}H\uZp \to F(X_+,H\uZp) \to \Sigma^{-W} H\uZp$$
with $\dim(W)<\dim(V)-1$. This gives the long exact sequence
\begin{myeq}\label{2cell}
\cdots \tilde{H}^\bigstar_G(S^V) \to \tilde{H}^\bigstar_G(X_+) \to \tilde{H}^\bigstar_G(S^W) \stackrel{d}{\to} \tilde{H}^{\bigstar+1}_G(S^W) \cdots
\end{myeq}
As before, we write the generators of $\tilde{H}^W_G(S^W)$ and $\tilde{H}^V_G(S^V)$ as $\omega$ and $\nu$ respectively. Since $d$ is an $\tilde{H}^\bigstar_G(S^0)$-module map, the computation is determined by $d(\omega) \in \tilde{H}^{W+1}_G(S^V).$

\begin{lemma}\label{twocell}
Let $X$ be a cohomological $2$-cell complex as above such that $\dim(W)<\dim(V) -1$. Then, $\tilde{H}^\bigstar_G(X_+)$ is a free $\tilde{H}^\bigstar_G(S^0)$-module. In particular, one of the following must holds \\
1. $\tilde{H}^\bigstar_G(X_+) \cong \Sigma^W \tilde{H}^\bigstar_G(S^0)\oplus \Sigma^V \tilde{H}^\bigstar_G(S^0)$.\\ 
2. $\tilde{H}^\bigstar_G(X_+)$ is free with two generators one with the same dimension as $\omega$ and the other with the same dimension as $\nu$.
\end{lemma}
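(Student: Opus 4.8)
The plan is to extract everything from the long exact sequence \eqref{2cell}. Write $R=\tilde{H}^\bigstar_G(S^0)$; then $\tilde{H}^\bigstar_G(S^V)$ and $\tilde{H}^\bigstar_G(S^W)$ are the free modules $\Sigma^V R$ and $\Sigma^W R$ on the classes $\nu$ and $\omega$, and since $d$ is $R$-linear it is nothing but multiplication by the single class $\theta\in\tilde{H}^{W+1-V}_G(S^0)$ determined by $d(\omega)=\theta\nu$. The gap hypothesis $\dim W<\dim V-1$ forces $\dim(W+1-V)\le -1$, so $\theta$ has strictly negative total degree. By Proposition \ref{cohptodd} the top cone $\Z/p[a_\xi,u_\xi,\kappa_\xi]/(\kappa_\xi^2)$ sits in total degrees $\ge 0$, whereas the bottom-cone classes $\Sigma^{-1}\kappa_\xi^\epsilon u_\xi^{-j}a_\xi^{-k}$ (with $j,k\ge 1$) have total degree $\epsilon-1-2k\le -2$; in particular total degree $-1$ is empty. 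This gives a clean dichotomy: either $\theta=0$, or $\theta$ is, up to a unit, one of these bottom-cone classes. If $\theta=0$ then $d=0$ and \eqref{2cell} collapses to $0\to\Sigma^V R\to\tilde{H}^\bigstar_G(X_+)\to\Sigma^W R\to 0$, which splits because $\Sigma^W R$ is free; this is conclusion 1.

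The substance is the case $\theta\ne 0$. Here I would compute the two ends of the connecting map from the module structure of $R$: multiplication by a top-cone monomial lowers the $a_\xi$- and $u_\xi$-exponents of a bottom-cone class and truncates to $0$ at the edge, while $\theta$ annihilates the whole bottom cone. Hence $\mathrm{im}\,d=\theta R$ is a finite lower corner of the bottom cone sitting over $\nu$, and dually $R/\mathrm{ann}(\theta)\cong\theta R$ is the matching finite upper corner of the top cone over $\omega$ that is deleted from $\ker d$. The key observation is that these two finite corners are in bijection and glue: in the extension $0\to\coker d\to\tilde{H}^\bigstar_G(X_+)\to\ker d\to 0$ the top corner missing from $\ker d$ is supplied by the top cone attached to $\nu$, and the bottom corner killed in $\coker d$ is supplied by the bottom cone attached to $\omega$. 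Reassembling, $\tilde{H}^\bigstar_G(X_+)$ becomes free on two generators; since the realignment only slides classes along the $u_\xi$-direction, which has total degree $0$, the two generators keep the underlying dimensions $\dim W$ and $\dim V$ of $\omega$ and $\nu$ while their $RO(G)$-degrees shift by powers of $u_\xi$. This is the Kronholm shifting phenomenon, and it is exactly why conclusion 2 is phrased in terms of dimensions rather than exact degrees.

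The hard part is this reassembly: proving that the extension is the free one, with no hidden extension and no residual $u_\xi$- or $a_\xi$-torsion. Two inputs would control it. First, $q^\ast(\nu)$ itself cannot be a free generator, since $\theta\,q^\ast(\nu)=q^\ast(\theta\nu)=0$; the genuine generator near degree $V$ must therefore be a $u_\xi$-divided class whose $i^\ast$-image lands in the bottom cone over $\omega$, and pinning this class down is the crux, with the Bockstein relations of Proposition \ref{bock} fixing its $\kappa_\xi$-component. Second, inverting $a_\xi$ annihilates the bottom cone and hence $\theta$, so the localized connecting map vanishes and $a_\xi^{-1}\tilde{H}^\bigstar_G(X_+)\cong a_\xi^{-1}\Sigma^V R\oplus a_\xi^{-1}\Sigma^W R$ is free of rank two (cf. Proposition \ref{loc}); this certifies that there are exactly two generators and that no $a_\xi$-torsion survives. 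Combining the explicit corner-matching with these constraints yields the freeness asserted in the lemma.
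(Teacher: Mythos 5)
Your opening moves (reading $d$ as multiplication by a class $\theta\in\tilde{H}^{W+1-V}_G(S^0)$ of negative total degree, the top-cone/bottom-cone dichotomy, and the split extension when $\theta=0$) agree with the paper, but there are two genuine gaps, and they sit exactly where you yourself locate ``the crux''. First, the constraint on $\theta$ is incomplete. Your degree formula is wrong: as elements of $\tilde{H}^\bigstar_G(S^0)$ the classes $\Sigma^{-1}\frac{1}{u_\xi^{j}a_\xi^{k}}$ have total degree $1-2k$ and $\Sigma^{-1}\frac{\kappa_\xi}{u_\xi^{j}a_\xi^{k}}$ have total degree $2-2k$ (the desuspension $\Sigma^{-1}$ is a homotopy-degree shift, so it contributes $+1$ cohomologically), so total degree $-1$ is \emph{not} empty. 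More importantly, the hypothesis $\dim(W)<\dim(V)-1$ still allows $\theta=\Sigma^{-1}\frac{\kappa_\xi}{u_\xi^{j}a_\xi^{k}}$ with $k\geq 2$. The paper excludes these by a Bockstein argument: $\beta(\omega)=0$ and $\beta(\nu)=0$ force $\beta(\theta)=0$, while $\beta\bigl(\Sigma^{-1}\frac{\kappa_\xi}{u_\xi^{j}a_\xi^{k}}\bigr)=\Sigma^{-1}\frac{1}{u_\xi^{j}a_\xi^{k-1}}\neq 0$ for $k\geq 2$ (Proposition \ref{bock}); the remaining case $k=1$ has total degree $0$ and is ruled out by the dimension hypothesis. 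This exclusion is not optional for your argument: if $\theta$ were of $\kappa_\xi$-type, the corner deleted from $\ker(d)$ would consist of the monomials $u_\xi^{j'}a_\xi^{k'}\omega$ (with $j'<j$, $k'<k$, and no $\kappa_\xi$-multiples, since $\kappa_\xi\theta=0$), which sit in dimensions of one parity relative to $\dim V$, while $\mathrm{im}(d)$ consists of the classes $\Sigma^{-1}\frac{\kappa_\xi}{u_\xi^{j''}a_\xi^{k''}}\nu$, which sit in the opposite parity; no realignment by degree-zero ($u_\xi$-direction) shifts can match them, so the ``bijection and glue'' step you rely on breaks down there.

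Second, even granting $\theta=\Sigma^{-1}\frac{1}{u_\xi^{j}a_\xi^{k}}$, the reassembly is asserted rather than proved: the two inputs you offer ($\theta\, q^\ast(\nu)=0$, and rank two after $a_\xi$-localization) are consistency checks and do not determine the extension $0\to \coker(d)\to\tilde{H}^\bigstar_G(X_+)\to\ker(d)\to 0$ nor exclude hidden extensions. The paper's proof at this point is concrete: exactness of \eqref{2cell} in the two degrees $W+j(\xi-2)$ and $W+k\xi$ produces classes $a$ and $b$ with $i^\ast(a)=u_\xi^{j}\omega$ and $i^\ast(b)=a_\xi^{k}\omega$ (note that both images lie in the \emph{top} cone over $\omega$; your assertion that the generator near dimension $\dim V$ has $i^\ast$-image in the bottom cone over $\omega$ is incorrect); it then establishes the key relation $q^\ast(\nu)=a_\xi^{k}a-u_\xi^{j}b$, invokes Proposition \ref{loc} to conclude that $u_\xi^{i}b\neq 0$ for all $i$ (this, rather than a generator count for the localization, is how Proposition \ref{loc} enters), and finally shows that the $\tilde{H}^\bigstar_G(S^0)$-module map from the free module on two generators in degrees $W+j(\xi-2)$ and $W+k\xi$ sending them to $a$ and $b$ is surjective, hence an isomorphism by comparing degreewise ranks. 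None of these constructions appear in your proposal, so the freeness claimed in case 2 remains unproved.
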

\begin{proof}
If $d(\omega)=0$, the conclusion 1) holds. Otherwise from Proposition \ref{cohptodd} we have that the boundary should be either 
$$d(\omega) = \Sigma^{-1} \frac{1}{u_\xi^j a_\xi^k} \nu \text{ for } j , k  \geq 1$$
or
$$ d(\omega) = \Sigma^{-1} \frac{\kappa_\xi}{u_\xi^{j} a_\xi^k} \nu \text{ for } j , k  \geq 1.$$
Since the Bockstein of the class $\omega$ is zero, the Bockstein of right hand side of the boundary must be zero. This forces that boundary should be one of the following form\\
$i)$ $d(\omega) = \Sigma^{-1} \frac{1}{u_\xi^j a_\xi^k} \nu$ for $j , k  \geq 1.$\\
$ii)$ $d(\omega) = \Sigma^{-1} \frac{\kappa_\xi}{u_\xi^{j} a_\xi} \nu$ for $j \geq 1.$\\
In the latter case, $\dim(W)=\dim(V)-1$. Therefore, it suffices to consider the case $i)$. We must have the relation
$$W = V +j(2-\xi)-k\xi.$$
The long exact sequence \eqref{2cell} for $\bigstar = \alpha : = W+ j(\xi -2)$ gives the isomorphism 
$$\xymatrix{\tilde{H}^\alpha_G(X_+)\ar[r]^(.30){i^\ast}_(.30){\cong} & \tilde{H}^{W+ j(\xi-2)}_G(S^W) \cong \Z/p\{ u_\xi^j\omega\}}. $$ 
So, there exists a class $a \in \tilde{H}^\alpha_G(X_+)$ such that $i^\ast(a) = u_\xi^j \omega.$ Also, for $\bigstar= \beta = V + j(2-\xi)= W +k \xi$,  the same exact sequence \eqref{2cell} gives 
$$\xymatrix@C=0.7cm{\tilde{H}^{W-1+k\xi}_G(S^W)\ar@{=}[d]\ar[r]^-{d} 
&  \tilde{H}^{V+j(2-\xi)}_G(S^V) \ar[r]^-{q^\ast}  \ar@{=}[d]
&\tilde{H}^\alpha_G(X_+) \ar[r]^-{i^\ast} 
& \tilde{H}^{W+k\xi}_G(S^W) \ar[r] 
& \tilde{H}^{V+1+j(2-\xi)}_G(S^V) \ar@{=}[d]
\\ \Z/p\{a_\xi^{k-1}\kappa_\xi \omega \}\ar[r]^-{d} 
& \Z/p\{ \Sigma^{-1} \frac{\kappa_\xi}{a_\xi u_ \xi^j}\nu\}
& & & 0}$$

Since $d(\omega) = \Sigma^{-1} \frac{1}{u_\xi^{j} a_\xi^k}\nu$, the boundary map $d: \tilde{H}^{W-1+k \xi}_G(S^W) \to \tilde{H}^{V+j(2-\xi)}_G(S^V)$ is an isomorphism. This implies a class $b \in \tilde{H}^{\beta}_G(X_+)$ such that $i^\ast(b) = a_\xi^k \omega$. Observe that $\dim(\alpha)=\dim(W)$ and $\dim(\beta)=\dim(V)$. 

\addtocounter{equation}{1}
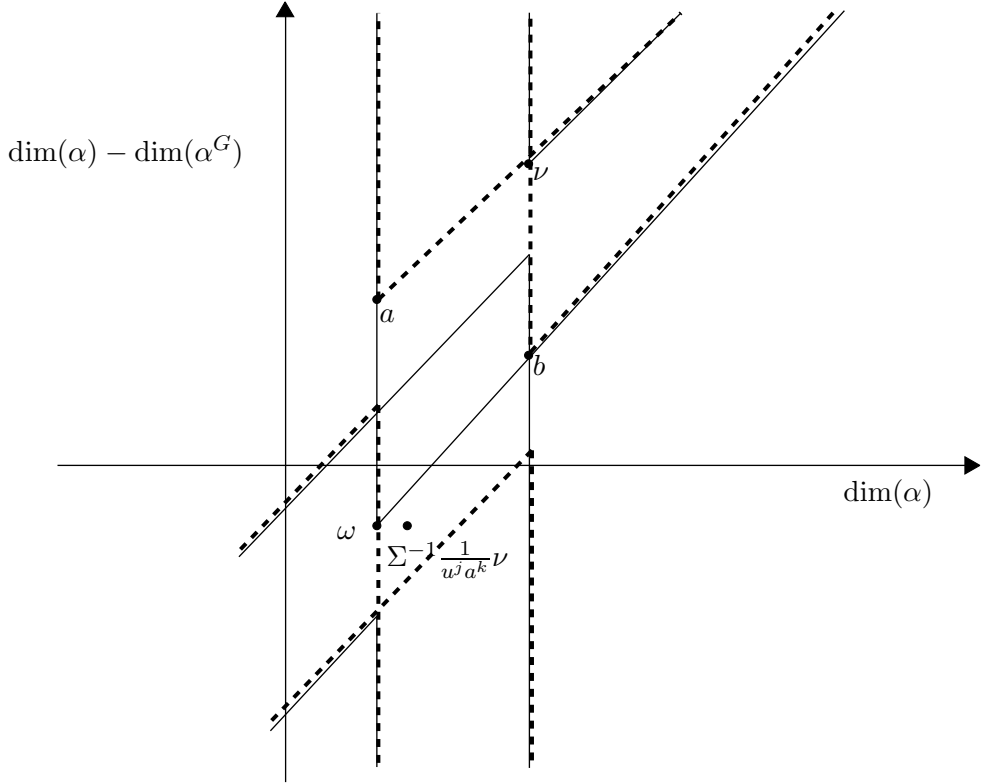
\begin{figure}[htp]
 \centering
\begin{tikzpicture}[x=2.0cm,y=2.0cm]
\draw [line width=0.5pt] (1,4)-- (7,4);
\draw [line width=0.5pt] (2.5,7.)-- (2.5,1.9);
\draw (6.10048947953825,3.975432814993677) node[anchor=north west] {$\dim(\alpha)$};
\draw (0.608769489964604,6.2542209566744935) node[anchor=north west] {$\dim(\alpha) -\dim(\alpha^G)$};
\draw (2.771608474617447,3.6657040027159544) node[anchor=north west] {$\omega $};
\draw (3.095173093862263,3.5657040027159544) node[anchor=north west] {$\Sigma^{-1}\frac{1}{u^ja^k}\nu$};
\draw [line width=0.5pt] (3.1,3.6)-- (6.174479165230286,7.014767157031732);
\draw [line width=0.5pt] (3.1,3.6)-- (3.100171626163344,6.997775870820408);
\draw [line width=0.5pt] (4.1,6.)-- (5.1,7.);
\draw (4.0573475303056075,6.0457015353833885) node[anchor=north west] {$\nu$};
\draw [line width=0.5pt] (4.103129214296232,6.003129214296233)-- (4.102789658778592,7.001403621127376);
\draw [line width=0.5pt] (4.103129214296232,5.398639859816693)-- (4.103129214296232,1.9941667839310293);
\draw [line width=0.5pt] (4.103129214296232,5.398639859816693)-- (2.1931987186232828,3.3939982927591877);
\draw [line width=0.5pt] (3.1,3)-- (3.1,2.);
\draw [line width=0.5pt] (3.1,3)-- (2.4,2.24);
\draw (4.0573475303056075,4.8) node[anchor=north west] {$b$};
\draw [dashed][line width = 1.5pt]  (4.11,4.8) -- (4.11,7);
\draw [dashed][line width = 1.5pt]  (4.11,4.75) -- (6.1,7);
\draw [dashed][line width = 1.5pt]  (4.12,4.1) -- (4.12,2);
\draw [dashed][line width = 1.5pt]  (4.12,4.1) -- (2.4,2.3);
\draw [dashed][line width = 1.5pt]  (3.11,5.1) -- (3.11,7);
\draw [dashed][line width = 1.5pt]  (3.11,5.1) -- (5.1,7);
\draw [dashed][line width = 1.5pt]  (3.11,4.4) -- (3.11,2);
\draw [dashed][line width = 1.5pt]  (3.11,4.4) -- (2.2,3.44);
\draw (3.05, 5.1) node[anchor= north west]{$a$};
\begin{scriptsize}
\draw [fill=black] (4.097840285680781,5.99975812878811) circle (1.5pt);
\draw [fill=black] (3.1,3.6) circle (1.5pt);
\draw [fill=black] (3.3,3.6) circle (1.5pt);
\draw [fill=black] (4.097840285680781,4.73) circle (1.5pt);
\draw [fill=black] (3.1,5.1) circle (1.5pt);
\draw [fill=black,shift={(7,4.)},rotate=270] (0,0) ++(0 pt,3.75pt) -- ++(3.2475952641916446pt,-5.625pt)--++(-6.495190528383289pt,0 pt) -- ++(3.2475952641916446pt,5.625pt);
\draw [fill=black,shift={(2.496751500528123,7.007762781275576)}] (0,0) ++(0 pt,3.75pt) -- ++(3.2475952641916446pt,-5.625pt)--++(-6.495190528383289pt,0 pt) -- ++(3.2475952641916446pt,5.625pt);

\end{scriptsize}
\end{tikzpicture}
\label{Fig1}
\caption{The classes $\omega$, $\nu$, $a$ and $b$ together with the top cones and bottom cones on them. The continuous lines represent the cones on $\omega$ and $\nu$ while the dashed ones represent the cones on $a$ and $b$.}
\end{figure}

For each $i,$ the class $a_\xi^i b$ is non zero, and hence, it'll survive $a_\xi$-localization. So, by Lemma \ref{loc} the classes $u_\xi^i b$ are non trivial for each $i.$ From the exact sequence \eqref{2cell} at $\bigstar = V$ we get the following extension
$$0 \to \Z/p\{ \nu\} \stackrel{q^\ast}{\to} \tilde{H}^V_G(X_+) \stackrel{i^\ast}{\to} \Z/p\{ a_\xi^k u_\xi^j \omega\} \to 0$$
Observe that both $a_\xi^k a$ and $u_\xi^jb$ map to the same element, so that up to units, 
\begin{myeq}\label{equ1}
q^\ast(\nu)= a_\xi^k a - u_\xi^{j}b.
\end{myeq}
Therefore, the classes $a_\xi^k a$ and $u_\xi^{j}b$ generate $\tilde{H}^V_G(X_+).$ The map $q^\ast$ is an $\tilde{H}^\bigstar_G(S^0)$-module homomorphism, so that $a$ and $b$ generate the image of the top cone of $\nu$ in $X$. For the bottom cone of $\nu$ note that (using \eqref{equ1})
$$q^\ast(\Sigma^{-1}\frac{\kappa_\xi}{a_\xi^{k+1}u_\xi}\nu) = \Sigma
^{-1}\frac{\kappa_\xi}{a_\xi u_\xi}a$$
and
$$q^\ast(\Sigma^{-1}\frac{\kappa_\xi}{a_\xi u_\xi^{j+1}}\nu) = -\Sigma
^{-1}\frac{\kappa_\xi}{a_\xi u_\xi}b,$$
so that these are also generated by the classes $a_\xi^k a$ and $u_\xi^j b.$ In particular, we can construct a $\tilde{H}^\bigstar_G(S^0)$-module map $f$ from a free $\tilde{H}^\bigstar_G(S^0)$-module with two generators $\eta$ and $\zeta$ in degrees $V +k \xi$ and $V + j(2-\xi)$ respectively to $\tilde{H}^V_G(X_+)$ with $f(\eta) = a$ and $f(\zeta) =b.$ This $f$ is surjective by the above, and by comparing degree-wise ranks we see that this is an isomorphism. Therefore, we obtain the conclusion 2).
\end{proof}

The next technique involves extending Lemma \ref{twocell} to the case with many cells provided the $d(\omega_i)$ does not involve the class $\kappa_\xi$. This argument is an adaptation of \cite[Theorem 3.15]{Fer99}.  

\begin{prop}\label{inn_ind}
Let $M^\bs$ be a graded $\tHbs(S^0)$-module such that it fits in the long exact sequence
$$\xymatrix{\cdots\tHbs(S^0)\{\nu\} \ar[r] & M^\bs \ar[r]^-{i^\ast} & \bigoplus_{i=1}^s\tHbs(S^0)\{\omega_i\} \ar[r]^-d & \tHbsb(S^0)\{\nu\} \cdots}$$
with for some $1 \leq n \leq s,$ the classes $\omega_1, \cdots, \omega_s, \nu$ satisfy the following\\
(i) $\dim W_1<\cdots \dim W_n< \dim V.$\\
(ii) $\dim V^{C_p}<\dim{W_1}^{C_p}<\cdots <\dim{W_n}^{C_p}.$ \\
(iii) $d(\omega_i)=0$ for $i >n.$\\
(iv) $d(\omega_i)= \Sigma^{-1} \frac{1}{u_\xi^{m_i} a_{\xi}^{k_i}}\nu$ for $1 \leq i \leq n.$\\
Then there exist $\omega'_1, \cdots, \omega'_s, \nu'$ such that\\
(a) If $l\leq n$, $i^\ast(\omega'_l)= u_\xi^{n_l}\omega_l + \sum_{j=1}^{l-1} Q_j(u_\xi, a_\xi)\omega_j$ where $n_l\geq 0$ and $Q_j(u_\xi, a_\xi)$ is a polynomial  in $u_\xi, a_\xi.$ If $l> n$, $i^*({\omega_l}')= \omega_l$. \\
(b) $i^\ast(\nu')= \sum_{i=1}^s P_i(u_\xi, a_\xi)\omega_i$, where $P_i(u_\xi, a_\xi)$ is a polynomial  in $u_\xi, a_\xi.$\\
(c) $M^\bs \cong \tHbs(S^{V'})\oplus \bigoplus_{i=1}^s\tHbs(S^{W'_i}).$\\
\end{prop}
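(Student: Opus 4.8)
The plan is an induction on $n$, the number of classes carrying a nontrivial differential, with Lemma \ref{twocell} (the case $n=1$) as base. First I would dispose of $\omega_{n+1},\dots,\omega_s$: since $d(\omega_i)=0$ for $i>n$, each lies in $\ker d=\operatorname{im}(i^\ast)$ and lifts to $\omega'_i\in M^\bigstar$ with $i^\ast(\omega'_i)=\omega_i$; as $i^\ast$ carries the free submodule they generate isomorphically onto the summand $\bigoplus_{i>n}\tilde{H}^\bigstar_G(S^0)\{\omega_i\}$, these split off as free summands $\tilde{H}^\bigstar_G(S^{W_i})$ and I may reduce to the case $n=s$.

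The engine of the induction is a syzygy forced by the ordering hypotheses. Writing $W_i=V+m_i(2-\xi)-k_i\xi$ (forced by (iv) exactly as the relation $W=V+j(2-\xi)-k\xi$ in Lemma \ref{twocell}), conditions (i) and (ii) translate into $0=m_0<m_1<\dots<m_n$ and $k_1>\dots>k_n>k_{n+1}=0$. Using the bottom-cone module structure of Proposition \ref{cohptodd}, where multiplication by $u_\xi$ or $a_\xi$ lowers the corresponding denominator exponent and annihilates once it reaches zero, one computes $d(a_\xi^{k_{l-1}-k_l}\omega_{l-1})=\Sigma^{-1}\frac{1}{u_\xi^{m_{l-1}}a_\xi^{k_l}}\nu=d(u_\xi^{m_l-m_{l-1}}\omega_l)$, so that $u_\xi^{m_l-m_{l-1}}\omega_l-a_\xi^{k_{l-1}-k_l}\omega_{l-1}\in\ker d$ for $2\le l\le n$, while $u_\xi^{m_1}\omega_1$ and $a_\xi^{k_n}\omega_n$ also lie in $\ker d$. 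I would take $\omega'_l$ to be a lift of $u_\xi^{m_1}\omega_1$ (for $l=1$) and of the syzygy $u_\xi^{m_l-m_{l-1}}\omega_l-a_\xi^{k_{l-1}-k_l}\omega_{l-1}$ (for $2\le l\le n$), which gives the triangular form (a) with $n_l=m_l-m_{l-1}$, and $\nu'$ a lift of $a_\xi^{k_n}\omega_n$, which gives (b). Reading off degrees yields $W'_l=V+m_{l-1}(2-\xi)-k_l\xi$ and $V'=V+m_n(2-\xi)$, so that the $n+1$ new generators occupy the degrees $V+m_{l-1}(2-\xi)-k_l\xi$ for $l=1,\dots,n+1$, which are exactly the summands appearing in (c).

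With these classes in hand I would form the free module $F=\tilde{H}^\bigstar_G(S^{V'})\oplus\bigoplus_{l=1}^n\tilde{H}^\bigstar_G(S^{W'_l})$ and the module map $f\colon F\to M^\bigstar$ carrying the basis to $\nu',\omega'_1,\dots,\omega'_n$, exactly as in the proof of Lemma \ref{twocell}. Surjectivity has two parts: the classes $i^\ast(\omega'_l),i^\ast(\nu')$ must generate $\ker d$, and the $q^\ast$-images of the top cone and of the bottom-cone classes of $\nu$ (including the $\kappa_\xi$-part, which $d$ never touches) must be expressible through the generators, generalizing the identity $q^\ast(\nu)=a_\xi^{k_1}\omega'_1-u_\xi^{m_1}\nu'$ of \eqref{equ1}. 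Injectivity I would then obtain by the degreewise rank comparison used in Lemma \ref{twocell}: the long exact sequence expresses $\dim_{\Z/p}M^\alpha$ through the ranks of $\{\nu\}$, of the $\{\omega_i\}$, and of $\operatorname{im} d$ in two consecutive degrees, and the staircase description of $\operatorname{im} d$ (the classes $\Sigma^{-1}\frac{1}{u_\xi^{a}a_\xi^{b}}\nu$ with $(a,b)\in\bigcup_i[1,m_i]\times[1,k_i]$) matches this with $\dim_{\Z/p}F^\alpha$ in every degree.

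The main obstacle is precisely this overlap of the images $d(\omega_i)$ in the bottom cone of $\nu$: because $u_\xi$ and $a_\xi$ act non-injectively there, it is not formal that the $n+1$ kernel classes above exhaust $\ker d$, nor that their lifts are free, since their $i^\ast$-images already generate non-free submodules. The resolution, as in Lemma \ref{twocell}, is that the missing bottom parts of each generator are supplied through $q^\ast$ from the bottom cone of $\nu$; to rule out surviving $u_\xi$-torsion one applies Proposition \ref{loc}, because every $a_\xi^i\nu'$ and $a_\xi^i\omega'_l$ survives $a_\xi$-localization and hence all their $u_\xi$-multiples are nonzero. Verifying that the staircase region cut out by the ordered exponents $m_\bullet,k_\bullet$ accounts for the rank of $F$ in each degree is the one genuinely computational step, and it is exactly here that hypotheses (i) and (ii) are indispensable.
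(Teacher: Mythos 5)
Your route is genuinely different from the paper's. The paper argues by induction on $s$: it runs the two\mbox{-}cell Lemma~\ref{twocell} on the pair $(\omega_1,\nu)$, maps the resulting free module into $M^\bs$, passes to $\operatorname{coker}(\theta_2^\bs)$ --- whose induced differentials are $\Sigma^{-1}\tfrac{1}{u_\xi^{m_i-m_1}a_\xi^{k_i}}\nu_1'$, so the inductive hypothesis applies --- and then splits the short exact sequence $0\to\tHbs(S^{W_1'})\to M^\bs\to\operatorname{coker}(\theta_2^\bs)\to 0$ using the inductively established freeness (hence projectivity) of the cokernel. You instead build all the generators at once from the consecutive staircase syzygies $u_\xi^{m_l-m_{l-1}}\omega_l-a_\xi^{k_{l-1}-k_l}\omega_{l-1}$, together with $u_\xi^{m_1}\omega_1$ and $a_\xi^{k_n}\omega_n$, and then try to prove that the map from the free module is an isomorphism by surjectivity plus a degreewise rank count. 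Your reduction to $n=s$ is sound, and your explicit generators and degrees $W_l'=V+m_{l-1}(2-\xi)-k_l\xi$, $V'=V+m_n(2-\xi)$ agree with what the paper's recursion produces; indeed your presentation of $\ker d$ is more explicit than anything in the paper. What the paper's induction buys is that it only ever has to solve the rank-one case: the kernel-generation combinatorics and the injectivity count are absorbed into Lemma~\ref{twocell} and the projectivity splitting, whereas you must carry out the global staircase bookkeeping yourself.

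Two concrete gaps remain in your version. First, the claim that $d$ ``never touches'' the $\kappa_\xi$-part, and the resulting description of $\operatorname{im}d$ as only the classes $\Sigma^{-1}\tfrac{1}{u_\xi^{a}a_\xi^{b}}\nu$ over the staircase, are wrong: $d$ is $\tHbs(S^0)$-linear, so $d(\kappa_\xi\omega_i)=\kappa_\xi d(\omega_i)=\Sigma^{-1}\tfrac{\kappa_\xi}{u_\xi^{m_i}a_\xi^{k_i}}\nu\neq 0$, and $\operatorname{im}d$ contains the full $\kappa_\xi$-translate of the staircase. This changes $\ker d$, $\operatorname{coker}d$ and every degreewise rank, so the count as you describe it would not close (the fix is routine, but it must be made). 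Second, and more seriously, the pivotal non-formal step is not delivered by your localization appeal. What you need is that the telescoping element $a_\xi^{k_1}\omega_1'+\sum_{l\geq 2}u_\xi^{m_{l-1}}a_\xi^{k_l}\omega_l'-u_\xi^{m_n}\nu'$, which lies in $\ker i^\ast=\operatorname{im}q^\ast$ in degree $V$, equals $q^\ast(\nu)$ up to a \emph{nonzero} unit --- the true generalization of \eqref{equ1}. Knowing that all $u_\xi$-multiples of $a_\xi^i\omega_l'$ and $a_\xi^i\nu'$ are nonzero is consistent with that element being zero: the split module $\operatorname{coker}(d)\oplus\ker(d)$ fits the same long exact sequence, has $u_\xi$ acting injectively after inverting $a_\xi$, and realizes exactly this degeneration, yet it is not free. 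So no argument using only the long exact sequence and localization-injectivity can finish; one needs the geometric input of Proposition~\ref{loc}, namely the identification of $a_\xi^{-1}\tilde{H}^\bigstar_{C_p}(X_+)$ with $H^\ast(X^{C_p};\Z/p)\otimes a_\xi^{-1}\tilde{H}^\bigstar_{C_p}(S^0)$ for an actual finite $C_p$-space, which pins down where the classes land on fixed points. Note also that Proposition~\ref{loc} is a statement about spaces and does not apply verbatim to the abstract module $M^\bs$; your write-up inherits this tension in a more exposed form than the paper, which confines it to its citation of Lemma~\ref{twocell}.
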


\begin{proof}
We use induction on $s$. For $s=1$ using the computation of $2$-cell case we get  $W'_1= W_1+ m_1(\xi-2)$ and $V'_1 = V + m_1(2-\xi)$ such that $M^\bs \cong \tHbs(S^0)\{\nu'_1\}\oplus \tHbs(S^0)\{\omega'_1\}.$ In the general case we can run the $2$-cell case with $\omega_1$ and $\nu$ to obtain the diagram

\begin{myeq}\label{twocellind}
\xymatrix{\cdots \tHbs(S^V)\ar@{=}[d] \ar[r]^-{(q_1^\ast,q_2^\ast)} &\tHbs(S^{V_1'})\oplus \tHbs(S^{W_1'})\ar[d]^{(\theta_1^\bs, \theta_2^\bs)} \ar[r]^-{i^\ast=(i_1^\ast, i_2^\ast)} & \tHbs(S^{W_1})\ar[d]^{incl} \ar[r]^{d_1} &\tHbsb(S^V)\cdots \ar@{=}[d]\\ \cdots \tHbs(S^V) \ar[r]^-{q^\ast} &M^\bs \ar[r]^-{i^\ast} & \tHbs(S^{W_1})\oplus  \bigoplus_{i=2}^n \tHbs(S^{W_i}) \ar[r]^-{d=(d_1, d_2)} & \tHbsb(S^V)\cdots}
\end{myeq} 

It gives the long exact sequence 
\begin{myeq}\label{coker-ex}
\xymatrix{\cdots\tHbs(S^{V'_1}) \ar[r] & coker(\theta_2^\bs) \ar[r]^-{i_c^\ast} & \bigoplus_{i=2}^s\tHbs(S^{W_i}) \ar[r]^-{\Sigma q_1^\ast\circ d_2} & \tHbsb(S^V) \cdots}
\end{myeq}

Note that $q_1^\ast(\nu) = u_\xi^{m_1} \nu'_1$ therefore, $\Sigma q_1^\ast\circ d_2(\omega_i) = \Sigma^{-1} \frac{1}{u_\xi^{m_i-m_1}a_\xi^{k_i}} \nu'_1$ for $2 \leq i \leq n.$ So, $coker(\theta_2^\bs)$ satisfies the induction hypothesis.  Therefore, there exist classes $\omega'_2, \cdots, \omega'_s, \nu'$ such that $i_c^\ast(\omega'_i)= u_\xi^{n_i}\omega_i + \sum_{j=2}^{i-1}Q_j(u_\xi, a_\xi)$ for some non-negative integer $n_i$ and  $i_c^\ast(\nu')= \sum_{i=2}^s P_i(u_\xi, a_\xi)\omega_i.$ Moreover,  $coker(\theta_2^\bs) \cong \tHbs(S^{V'})\oplus \bigoplus_{i=2}^s\tHbs(S^{W'_i}).$

Now using \eqref{twocellind} and \eqref{coker-ex} we have the following commutative digram
\begin{myeq}\label{i-diam}
\xymatrix{0 \ar[r] & \tHbs(S^{W'_1})\ar[d]^{i_2^\ast} \ar[r]^{\theta_2^\bs} & M^\bs \ar[r]\ar[d]^{i^\ast} & coker(\theta_2^\bs)\ar[d]^{i_c^\ast} \ar[r] & 0\\
& \tHbs(S^{W_1}) \ar[r]^-{incl} & \bigoplus_{i=1}^s \tHbs(S^{W_i}) \ar[r]^{pr_2} & \bigoplus_{i=2}^s \tHbs(S^{W_i}) }
\end{myeq}
 such that the rows are exact. Since $coker(\theta_2^\bs)$ is free by induction hypothesis, therefore, the exactness of the top row yields $M^\bs \cong \tHbs(S^{V'})\oplus \bigoplus_{i=1}^s\tHbs(S^{W'_i}).$ 
 
 By the left commutative square of \eqref{i-diam} it is immediate that $i^\ast(\omega'_1)= u_\xi^{m_1}\omega_1.$ Now the right commutative square yields 
$$pr_2\circ i^\ast(\omega'_i) = u_\xi^{n_i}\omega_i+ \sum_{j=2}^{i-1}Q_j(u_\xi, a_\xi)\omega_j = pr_2(u_\xi^{n_i}\omega_i+ \sum_{j=2}^{i-1}Q_j(u_\xi, a_\xi)\omega_j).$$
This implies
$$i^\ast(\omega'_i)-(u_\xi^{n_i}\omega_i+ \sum_{j=2}^{i-1}Q_j(u_\xi, a_\xi)\omega_j) \in ker(pr_2)\cong \tHbs(S^{W_1}).$$
 Therefore, for some $x_i \in \tilde{H}_G^{W'_i-W_1}(S^0),$ we have $$i^\ast(\omega'_i)= x_i.\omega_1+u_\xi^{n_i}\omega_i+ \sum_{j=2}^{i-1}Q_j(u_\xi, a_\xi)\omega_j.$$
For $2 \leq i \leq n,$ $\dim(W'_i - W_1) = \dim(W_i -W_1) = 2(k_1-k_i)>0.$ Therefore, by dimension reasons $x_i$ can't have a term involving $\kappa_\xi$ and the divisible part of $\tHbs(S^0)$. Hence, $x_i$ is a polynomial in $u_\xi,a_\xi.$ For $i>n,$ we have $i^\ast(\omega'_i) =\omega_i.$

 Similarly, $pr_2\circ i^\ast(\nu') = \sum_{i=2}^s P_i(u_\xi, a_\xi)\omega_i = pr_2(\sum_{i=2}^s P_i(u_\xi, a_\xi)\omega_i).$ Therefore, for some $y \in \tilde{H}_G^{V'-W_1}(S^0)$ we have
 $$i^\ast(\nu') = y. \omega_1 + \sum_{i=2}^s P_i(u_\xi, a_\xi)\omega_i.$$ 
 Again note that $\dim(V'-W_1)= \dim(V-W_1) = 2k_1.$ Therefore, by dimension reason $y$ is a polynomial in $u_\xi, a_\xi.$
 This completes the induction.
\end{proof}

Finally we use Proposition \ref{inn_ind} to prove the freeness result for sparse complexes. 
\begin{thm}\label{frcp}
Suppose $X$ is a sparse  $Rep(G)$-cell complex, then $\tilde{H}_G^\bigstar (X_+; \uZp)$ is a free $\tilde{H}^\bigstar_G(S^0; \uZp)$-module whose basis elements lie in $\ker(\beta)$.   
\end{thm}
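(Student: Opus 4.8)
The plan is to induct on the cells of $X$, attaching one representation cell $D(V)$ at a time, and to carry as inductive hypothesis that $\tHbs(X^{(n-1)}_+)$ is free over $\tHbs(S^0)$ on generators $\omega_1,\dots,\omega_s$ of $RO(G)$-degrees $W_i$, with each $\dim W_i$ equal to the dimension of its cell and each $\omega_i\in\ker(\beta)$. The base case $X^{(0)}=\bigsqcup G/G$ is immediate, the units serving as generators and lying in $\ker(\beta)$. For the inductive step I would use the cofibre sequence $X^{(n-1)}_+\to X^{(n)}_+\to S^V$ and the associated long exact sequence
$$\cdots\to\tHbs(S^V)\xrightarrow{q^\ast}\tHbs(X^{(n)}_+)\xrightarrow{i^\ast}\tHbs(X^{(n-1)}_+)\xrightarrow{d}\tHbsb(S^V)\to\cdots,$$
with $\nu$ the generator of $\tilde{H}^V_G(S^V)$. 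Since $X$ is sparse there is no cell in dimension $\dim V-1$, so every generator satisfies either $\dim W_i=\dim V$ (another cell of the same dimension) or $\dim W_i\le\dim V-2$.

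The crux is to show that each $d(\omega_i)$ is either $0$ or of the form $\Sigma^{-1}\tfrac{1}{u_\xi^{m_i}a_\xi^{k_i}}\nu$ demanded by hypothesis (iv) of Proposition \ref{inn_ind}. Because $\omega_i\in\ker(\beta)$ and $\beta$ commutes with $d$, we have $\beta(d(\omega_i))=d(\beta(\omega_i))=0$; since $\beta(\nu)=0$, this means $d(\omega_i)$ is the product of $\nu$ with a class of $\ker(\beta)\subset\tHbs(S^0)$ of total degree $1-g$, where $g=\dim V-\dim W_i$. Reading off Proposition \ref{cohptodd}: the top cone class $u_\xi^l$ and the bottom cone class $\Sigma^{-1}\tfrac{\kappa_\xi}{u_\xi^{m}a_\xi}$ of total degree $0$ both force $g=1$, which sparseness forbids; a bottom cone class $\Sigma^{-1}\tfrac{\kappa_\xi}{u_\xi^{m}a_\xi^{k}}$ with $k\ge2$ (the only candidate when $g\ge3$ is odd) has nonzero Bockstein $\Sigma^{-1}\tfrac{1}{u_\xi^{m}a_\xi^{k-1}}$ by Proposition \ref{bock} and so is excluded; and $g=0$ leaves only $\kappa_\xi u_\xi^l$, again killed by $\beta$. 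Hence only the even gaps $g=2k_i$ survive, and there the unique class of $\ker(\beta)$ in the correct degree is $\Sigma^{-1}\tfrac{1}{u_\xi^{m_i}a_\xi^{k_i}}$. This is exactly the step where both hypotheses on $X$ are used: sparseness kills the gap-$1$ (top cone and $\kappa_\xi$) differentials, while the $\ker(\beta)$ condition kills the odd-gap $\kappa_\xi$ differentials.

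With all differentials in this type, I would reindex so that $\omega_1,\dots,\omega_n$ are precisely those with $d(\omega_i)\ne0$. The degree equation $W_i=V+2m_i-(m_i+k_i)\xi$ forced by the differential gives $\dim W_i^{C_p}=\dim V^{C_p}+2m_i>\dim V^{C_p}$ and $\dim W_i=\dim V-2k_i<\dim V$. When several differentials share a fixed-point dimension (equal $m_i$), multiplication by a power of $a_\xi$ cancels all but one at that level exactly as in the proof of Proposition \ref{inn_ind}, and after this reduction the surviving generators have strictly increasing $\dim(\cdot)^{C_p}$, so hypotheses (i)--(iii) are arranged and the remaining generators satisfy $d=0$. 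Proposition \ref{inn_ind} then yields $\tHbs(X^{(n)}_+)\cong\tHbs(S^{V'})\oplus\bigoplus_{i}\tHbs(S^{W'_i})$, a free module, the new generator $\nu'$ accounting for the attached cell.

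Finally, to see the new basis lies in $\ker(\beta)$, recall from Proposition \ref{inn_ind}(a),(b) that $i^\ast(\omega'_l)$ and $i^\ast(\nu')$ are $\tHbs(S^0)$-combinations of the old $\omega_j\in\ker(\beta)$ with coefficients polynomial in $u_\xi,a_\xi$; since $\beta(u_\xi)=\beta(a_\xi)=0$ by Proposition \ref{bock} and $\beta$ is a derivation, these combinations lie in $\ker(\beta)$, whence $i^\ast(\beta\omega'_l)=i^\ast(\beta\nu')=0$. Thus $\beta\omega'_l,\beta\nu'\in\ker(i^\ast)=\mathrm{im}(q^\ast)$, and expanding them in the free basis just produced and matching degrees forces these Bocksteins to vanish. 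I expect the main obstacle to be the differential analysis of the second and third paragraphs --- in particular the joint bookkeeping of total and fixed-point dimensions that pins down which single bottom cone monomial $d(\omega_i)$ can be and simultaneously arranges the orderings required by Proposition \ref{inn_ind}; once that is in place, the freeness and the $\ker(\beta)$-conclusion follow formally from Propositions \ref{inn_ind} and \ref{bock}.
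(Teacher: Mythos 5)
Your strategy coincides with the paper's (induct over cells, use the Bockstein plus sparseness to pin down the differentials, invoke Proposition \ref{inn_ind}, then deal with the Bockstein of the new generators), and your differential analysis is correct --- in fact slightly more careful than the paper's, since you also dispose of same-dimensional cells via the $g=0$ case. However, there are two genuine gaps. The first is in the arrangement step. Proposition \ref{inn_ind} needs the generators with nonzero differential to satisfy (i) and (ii) \emph{simultaneously}: writing $d(\omega_i)=\Sigma^{-1}\frac{1}{u_\xi^{m_i}a_\xi^{k_i}}\nu$, the surviving pairs must admit an ordering with $m_1<\cdots<m_n$ \emph{and} $k_1>\cdots>k_n$. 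Cancelling only within equal-$m$ levels by powers of $a_\xi$ does not achieve this: two generators with $(m_i,k_i)=(1,1)$ and $(2,2)$ survive your reduction untouched, and no ordering of them satisfies (i) and (ii) together. What is needed (and what the paper does) is to keep only those generators whose pairs are maximal under componentwise dominance --- maximal elements are automatically a chain of the required shape --- and to cancel every dominated $\omega$ (with pair $(m,k)\leq(m_j,k_j)$) against the mixed multiple $u_\xi^{m_j-m}a_\xi^{k_j-k}\omega_j$; your reduction never uses these mixed $u_\xi^{\ast}a_\xi^{\ast}$ multiples.

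The second gap is more serious: your final claim that $\beta(\omega'_l)\in\mathrm{im}(q^\ast)$ and ``matching degrees forces these Bocksteins to vanish'' is false. The image of $q^\ast$ in degree $W'_l+1$ is $\tilde{H}^{W'_l+1-V}_G(S^0)\cdot q^\ast(\nu)$, and since $\dim(W'_l+1-V)=1-g'$ is negative (where $g'=\dim V-\dim W'_l\geq 2$), the bottom cone of Proposition \ref{cohptodd} supplies a candidate class in exactly this degree: $\Sigma^{-1}\frac{1}{u_\xi^{d}a_\xi^{e}}$ when $g'$ is even, and $\Sigma^{-1}\frac{\kappa_\xi}{u_\xi^{d}a_\xi^{e}}$ when $g'$ is odd. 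So nothing formal forces $\beta(\omega'_l)=0$, and indeed in the even case it need not vanish: the paper must \emph{change the basis}, replacing $\omega'_l$ by $\omega'_l-q^\ast(\tau)$ with $\tau=\Sigma^{-1}\frac{\kappa_\xi}{u_\xi^{d}a_\xi^{e+1}}\nu$, which satisfies $\beta(q^\ast\tau)=\beta(\omega'_l)$ by Proposition \ref{bock}. In the odd case one applies $\beta$ once more: $\beta^2=0$ forces $\Sigma^{-1}\frac{1}{u_\xi^{d}a_\xi^{e-1}}\nu$ to lie in $\mathrm{im}(d)=\ker(q^\ast)$, and an inspection of which generators could hit it shows the only possibility is a cell in dimension $\dim W'_l+1$ --- contradicting sparseness. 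Thus sparseness enters the proof a second time, in the $\ker(\beta)$ step and not only in killing the gap-one differentials as you assert; this correction is also what keeps the induction running, since the inductive hypothesis requires generators in $\ker(\beta)$.
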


\begin{proof}
We use induction on the cellular filtration $\{X^{(n)}\}_{n\geq 0}$ of $X$. The base case is the $0$-skeleton which is a wedge of copies of $S^0$. Since $X$ is locally finite, it suffices to prove the case of a single cell attachment: $X$ is obtained from $Y$ by attaching a single cell $D(V)$ for some representation $V$, where $Y$ is a $Rep(G)$-complex such that 
$$\tilde{H}^\bigstar_G(Y_+) \cong \oplus_{i=1}^s \tilde{H}^\bigstar_G(S^{W_i}) \cong \oplus_{i=1}^s \tilde{H}^{\bigstar-W_i}_G(S^0),$$
with $\dim(W_i)<\dim(V)-1$ and $\beta(\omega_i)=0$. We use the long exact sequence 
\begin{myeq}\label{cellatt}
\cdots \tilde{H}^\bigstar_G(S^V) \to \tilde{H}^\bigstar_G(X_+) \to \tilde{H}^\bigstar_G(Y_+) \stackrel{d}{\to} \tilde{H}^{\bigstar+1}_G(S^V) \cdots
\end{myeq}
The cohomology of $X$ is determined by $d(\omega_i)\in \tilde{H}^{W_i+1}_G(S^V)\cong \tilde{H}^{W_i+1-V}_G(S^0)$. 
%
%
%
%
 
The boundary map $d$ is a $\tilde{H}^\bigstar_G(S^0; \uZp)$-module map. We first observe that after rearrangement there is a maximal $n$ and elements $\omega_1, \cdots, \omega_n$ satisfying the following properties\\
(i) $d\omega_i \neq 0$ for $1\leq i \leq n.$ \\
(ii) $\dim W_1 < \cdots <\dim W_n < \dim V$\\
(iii) $\dim V^{C_p} < \dim W_1^{C_p}< \cdots < \dim W_n^{C_p}.$ \\ 
(iv) After a base change, $d\omega =0$ for the free module generators other than $\{\omega_1, \cdots, \omega_n\}$.\\
 If $\omega$ is not one of $\omega_1, \cdots, \omega_n$; then there exists $1\leq j\leq n$ such that $\dim W \geq \dim W_j $ and $\dim W^G \leq \dim W_j^{G}.$ In particular, 
$$d(\omega_j) =\lambda \Sigma^{-1} \frac{1}{u_\xi^{m_j }a_\xi^{k_j}}\nu $$
 with $k \leq k_j$,  $m \leq m_j$ and $\lambda \in \Z/p$. Therefore, we can choose a change of basis as 
$$\{\omega- u_\xi^{m_j-m}a_\xi^{k_j-k}\omega_j\mid \omega \in   \{\omega_{n+1}, \cdots, \omega_s\}\} \cup  \{\omega_1, \cdots, \omega_n\}$$ 
so that $d(\omega) =0$ for $\omega \notin \{\omega_1, \cdots, \omega_n\}.$ For the $\omega_i $ as above the boundary is given by 
$$d(\omega_i)= \Sigma^{-1} \frac{1}{u_\xi^m a_\xi^k}\nu$$ 
for some $m, k \geq 1$ (by the Bockstein argument as in Lemma \ref{twocell}). Now we apply Proposition \ref{inn_ind} to deduce that $\tHbs(X_+)$ is a free module on $\omega_i'$ and $\nu'$. In order to complete the induction we need to verify that $\beta(\omega_i')=0$. This is where the sparseness property comes in. 

Observe from Proposition \ref{inn_ind} that $i^\ast(\beta(\nu') )=0$ so that $\beta(\nu')= x.q^\ast\nu$. Since $\dim V'=\dim V$, $\dim V'^{C_p} > \dim V^{C_p}$, and that $H^\alpha_G(S^0)=0$ for $\dim \alpha=1$ and $\dim\alpha^{C_p}>0$, we obtain $\beta(\nu')=0$. Analogously $i^\ast(\beta (\omega_l'))=0$, which implies $\beta(\omega_l')=xq^\ast(\nu)$. We note that for degree reasons $x$ must lie in the divisible part so that 
$$x= \begin{cases} 
\Sigma^{-1}\frac{1}{u_\xi^da_\xi^e} ~~~ \mbox{ if } \dim(V)- \dim(W_l') \mbox{ is even.} \\ 
\Sigma^{-1}\frac{\kappa_\xi}{u_\xi^da_\xi^e} ~~~ \mbox{ if } \dim(V)- \dim(W_l') \mbox{ is odd.} \end{cases}$$
In the former case note that we have a class $\tau$ namely $\Sigma^{-1}\frac{\kappa_\xi}{u_\xi^da_\xi^{e+1}}\nu$ such that $\beta(q^\ast(\tau))=\beta(\omega_l')$. Then we may replace $\omega_l'$ by $\omega_l' - q^\ast (\tau)$ which changes the basis of the free module to a new basis (this follows as $\tau$ lies in the divisible part of $\nu$). 

In the latter case, note that 
$$\beta(\Sigma^{-1}\frac{\kappa_\xi}{u_\xi^da_\xi^e}) =\Sigma^{-1}\frac{1}{u_\xi^da_\xi^{e-1}}$$
so that the class $\zeta = \Sigma^{-1}\frac{1}{u_\xi^da_\xi^{e-1}}$ must lie in the image of $d$. Such a class cannot be hit by the image of $\omega_i$ if $\dim(W_i) > \dim(W_l')+1$. If this class is a multiple of $d(\omega_i)$ for $\dim(W_i)\leq \dim(W_l')$ then also $\Sigma^{-1}\frac{\kappa_\xi}{u_\xi^da_\xi^{e}}$ is a multiple. Therefore the only option is $\dim(W_i)=\dim(W_l')+1$, but that violates the sparseness condition. 
\end{proof}

\bibliographystyle{siam}
\bibliography{algtop}{}

\mbox{ }\\

\end{document}